\newtheorem{theorem}{Theorem}[section]
\newtheorem{lemma}[theorem]{Lemma}
\newtheorem{proposition}[theorem]{Proposition}
\newtheorem{corollary}[theorem]{Corollary}
\newtheorem{remark}[theorem]{Remark}
\newcommand{\sect}[1]{\section{#1} \setcounter{equation}{0} }
\newcommand{\norm}[2]{\left\|#1\right\|_{#2}}
\newcommand{\ds}{\displaystyle}
\newcommand{\Poly}{\mathbb P}
\newcommand{\Pn}{\mathbb P_n}
 \newcommand{\I}{{\mathcal I}}
\newcommand{\andd}{\quad\mbox{\rm and}\quad}
\newcommand\e{{\varepsilon}}
\def\be  {\begin{equation}}
\def\ee  {\end{equation}}
\newcommand{\B}{\mathbb S}
\newcommand{\C}{\mathbb C}
\newcommand{\W}{\mathbb L}
\newcommand{\R}{\mathbb R}
\newcommand{\N}{\mathbb N}
\newcommand{\ineq}[1]{\text{\rm(\ref{#1})}}
\newcommand{\ie}{{\em i.e., }}
\newcommand{\eg}{{\em e.g. }}
\newcommand{\st}{\;\; \big| \;\;}
\newcommand{\LL}{\mathbb{L}}
\newcommand{\Lp}{\LL_p}
\newcommand{\Lq}{\LL_q}
\newcommand{\tI}{\widetilde I}
\newcommand{\ttI}{\widehat{I}}
\newcommand{\wab}{w_{\alpha,\beta}}
\newcommand{\wba}{w_{\beta,\alpha}}
\newcommand{\wbb}{w_{\beta,\beta}}
\newcommand{\wb}{w_{0,\beta}}
\newcommand{\wg}{w_{\alpha -\gamma, \beta-\gamma}}
\newcommand{\AC}{\mathrm{AC}}
\newcommand{\loc}{\mathrm{loc}}
\newcommand\M {{\mathcal M}}
\newcommand\E{{\mathcal E}}
\newcommand{\thm}[1]{Theorem~\ref{#1}}
\newcommand{\lem}[1]{Lemma~\ref{#1}}
\newcommand{\cor}[1]{Corollary~\ref{#1}}
\newcommand{\prop}[2]{Proposition~\ref{#1}(\ref{#2})}
\newcommand{\rem}[1]{Remark~\ref{#1}}
 \newcommand{\Wpab}{\W_p^{\alpha,\beta}}
  \newcommand{\Wpabr}{\W_{p,r}^{\alpha,\beta}}
 \newcommand{\Wqab}{\W_q^{\alpha,\beta}}
 \newcommand{\Bpab}{\B_p^{\alpha,\beta}}
 \newcommand{\Bpba}{\B_p^{\beta,\alpha}}
  \newcommand{\Bpbb}{\B_p^{\beta,\beta}}
 \newcommand{\Wpbb}{\W_p^{\beta,\beta}}
\title{Weighted moduli of smoothness of $k$-monotone functions and applications }
\author{
Kirill A.  Kopotun\thanks{Department of Mathematics, University of
Manitoba, Winnipeg, Manitoba, R3T 2N2, Canada ({\tt
kopotunk@cc.umanitoba.ca}). Supported by NSERC of Canada.}    }
\begin{document}

 \maketitle

\begin{abstract}

Let $\omega_\varphi^k(f,\delta)_{w,\Lq}$ be the Ditzian-Totik modulus with weight $w$, $\M^k$ be the cone of  $k$-monotone functions on $(-1,1)$, \ie those functions
whose $k$th divided differences are nonnegative for all selections of $k+1$ distinct points  in $(-1,1)$, and denote
$\E (X, \Pn)_{w,q} := \sup_{f\in X} \inf_{P\in\Pn}\norm{w(f-P)}{\Lq}$,
where $\Pn$ is the set of algebraic polynomials of degree at most $n$.
Additionally, let
$\wab(x) := (1+x)^\alpha  (1-x)^\beta$ be the classical Jacobi weight,
and denote by $\Bpab$   the class  of all functions such that
$\norm{\wab   f}{\Lp}=1$.

In this paper, we determine the exact behavior (in terms of $\delta$) of $\sup_{f\in\Bpab \cap \M^k} \omega_\varphi^k(f,\delta)_{\wab,\Lq}$ for $1\leq p, q\leq \infty$ (the interesting case being $q<p$ as expected)
and $\alpha,\beta >-1/p$ (if $p<\infty$) or $\alpha,\beta\geq 0$ (if $p=\infty$). It is interesting to note that, in one case, the behavior is different for $\alpha=\beta=0$ and for $(\alpha,\beta)\neq (0,0)$.
Several applications  are given. For example, we determine the exact (in some sense) behavior of $\E (\M^k\cap \Bpab, \Pn)_{\wab,\Lq}$ for $\alpha,\beta \geq 0$.

\end{abstract}

%

\sect{Introduction and main results}

Let $\wab(x) := (1+x)^\alpha  (1-x)^\beta$ be the (classical) Jacobi weight, $\| \cdot \|_p := \norm{\cdot}{\Lp[-1,1]}$,
\[
\Wpab  := \left\{ f: [-1,1]\mapsto\R \st \norm{\wab   f}{p} < \infty \right\} ,
\]
and let $\Bpab$ be the unit sphere in $\Wpab$, \ie $f\in \Bpab$ iff $\norm{\wab   f}{p}=1$.
 It is   convenient to denote $J_p := (-1/p, \infty)$ if $p<\infty$, and $J_\infty := [0,\infty)$. Clearly,  $1\in\Wpab$ iff $\alpha,\beta \in J_p$.
 We note that  more general  than Jacobi weights can be considered, and many results in this paper are valid and/or can be modified to be valid for those general weights. However,   we only consider  Jacobi weights in order not to overcomplicate the proofs which are already rather technical, and since the estimates of rates of unweighted polynomial approximation that have matching converse results involve weighted moduli with classical Jacobi weights
 $w_{r/2, r/2} = \varphi^r$, $r\in\N$ (see \cites{kls, kls-ca} or   \ineq{direct} with $\alpha=\beta=0$ for an example of such an estimate). Here, as usual,   $\varphi(x):= w_{1/2,1/2} =(1-x^2)^{1/2}$.

Let
\[\Delta_h^k(f,x, [a,b]):=\left\{
\begin{array}{ll} \ds
\sum_{i=0}^k  {k \choose i}
(-1)^{k-i} f(x-kh/2+ih),&\mbox{\rm if }\, x\pm kh/2  \in [a,b] \,,\\
0,&\mbox{\rm otherwise},
\end{array}\right.\]
be the $k$th symmetric difference, $\Delta_h^k(f,x) := \Delta_h^k(f,x, [-1,1])$,  and let
\[
  \overrightarrow\Delta_h^k (f,x):= \Delta_h^k(f,x+kh/2) \andd
\overleftarrow\Delta_h^k (f,x):= \Delta_h^k(f,x-kh/2)
\]
be the forward and backward $k$th differences, respectively.
The weighted main part moduli and the weighted Ditzian-Totik (DT) moduli of smoothness (see \cite[(8.1.2), (8.2.10) and Appendix B]{dt}) are defined, respectively,  as
\[
 \Omega_\varphi^k(f,\delta)_{w,p} := \sup_{0<h\le \delta}\|w\Delta_{h\varphi}^k(f)\|_{\Lp[-1+2k^2h^2 ,1-2k^2h^2]}
\]
and
\be  \label{mod}
  \omega_\varphi^k(f,\delta)_{w,p}   :=    \Omega_\varphi^k(f,\delta)_{w,p}
 +  \overrightarrow \Omega_\varphi^k(f,\delta)_{w,p} + \overleftarrow\Omega_\varphi^k(f,\delta)_{w,p} ,
\ee
where
\[
\overrightarrow \Omega_\varphi^k(f,\delta)_{w,p} := \sup_{0<h\le 2k^2\delta^2 }
\|w \overrightarrow\Delta_h^k (f)\|_{\Lp[-1,-1+2k^2\delta^2]}
\]
and
\[
\overleftarrow\Omega_\varphi^k(f,\delta)_{w,p} :=  \sup_{0<h\le 2k^2\delta^2}\|w \overleftarrow\Delta_h^k (f)\|_{\Lp[1-2k^2\delta^2,1]} .
\]
If $\alpha=\beta=0$, then
$ \omega_\varphi^k(f,\delta)_{1,p} $ is equivalent to the usual DT modulus $\omega_\varphi^k(f,\delta)_p = \sup_{0<h\le \delta}\|\Delta_{h\varphi}^k(f)\|_{p}$.

It is easy to see that $\Omega_\varphi^k(f,\delta)_{\wab,p} \leq c \norm{\wab f}{p}$ for all $\alpha,\beta \in\R$. (Throughout this paper, $c$ denote positive constants that may be different even if they appear in the same line.) At the same time,
moduli $\omega_\varphi^k(f,\delta)_{\wab,p}$ are usually defined with the restriction $\alpha,\beta \geq 0$ for all $p \leq \infty$ and not just for $p=\infty$.  The reason for this is that, on one hand, $\omega_\varphi^k(f,\delta)_{\wab,p}\leq c \norm{\wab f}{p}$ if $\alpha,\beta\geq 0$, and, on the other hand, if $\alpha <0$ or $\beta<0$, then there are functions $f$ in $\Wpab$ for which $\omega_\varphi^k(f,\delta)_{\wab,p} = \infty$. Indeed, suppose that $p<\infty$ and that $\delta >0$ is fixed. If $f(x) :=  (x+1-\e)^{-\alpha-1/p} \chi_{[-1+\e, -1+2\e]}(x)$ with  $\alpha<0$ and $0<\e<2k^2\delta^2$, then
$\norm{\wab f}{p} \leq c $,  $\norm{\wab f(\cdot +\e)}{p} = \infty$, and $\norm{\wab f(\cdot +i\e)}{p} = 0$, $2\leq i\leq k$, and so
$\overrightarrow \Omega_\varphi^k(f,\delta)_{\wab,p} = \infty$.

If $\alpha,\beta \geq 0$, then it is easy to see that, if $f\in\Wpab$, $1\leq p<\infty$, then $\lim_{\delta\to 0^+}\omega_\varphi^k(f,\delta)_{\wab,p} = 0$. In the case $p=\infty$, the fact that $f$ is in $\W_\infty^{\alpha,\beta}$ implies that $\omega_\varphi^k(f,\delta)_{\wab,\infty}$ is bounded but it is not enough   to guarantee its convergence  to zero if $\alpha^2+\beta^2 \ne 0$ even if $f$ is continuous on $(-1,1)$
(consider, for example, $f(x)=  \wab^{-1}(x)$).
One can show (see \eg \cite[p. 287]{dsh} for a similar proof)  that,  if   $\alpha> 0$ and $\beta>0$, then  for $f\in\C(-1,1)$,
$\lim_{\delta\to 0^+} \omega_\varphi^k(f,\delta)_{\wab,\infty} =0$ iff $\lim_{x\to\pm 1} \wab(x)f(x)=0$.

 One can easily show that, for $\alpha,\beta\in\R$,
  \be \label{aaaomega}
 \sup_{f  \in\Bpab} \Omega_\varphi^k(f,\delta)_{\wab,q} \sim 1, \quad 1\leq q \leq p \leq \infty .
 \ee
 (Here and later in this paper, we write $F\sim G$ iff there exist positive constants $c_1$ and $c_2$  such that $c_1 F \leq G\leq c_2 F$. These constants are always independent of $\delta$, $n$ and $x$ but may depend on $k$, $\alpha$, $\beta$, $p$ and $q$.)
Indeed,
since $\Omega_\varphi^k(f,\delta)_{\wab,q} \leq c \norm{\wab f}{q}$, H\"{o}lder's inequality implies the upper estimate. The lower estimate follows, for example, from the fact that, for
 $k\in\N$, $\alpha,\beta \in\R$,   $0<p, q \leq \infty$, and $0<\delta\leq 1/(2k)$,
 the function
 \[
f_\delta(x) :=
\begin{cases}
(-1)^i  , & \mbox{\rm if} \quad \ds x \in \left[  k\delta i,  k\delta(i+1/2)\right] , \quad 0\leq i \leq  \lfloor 1/(2k\delta) \rfloor ,   \\
0, & \mbox{\rm otherwise,}
\end{cases}
\]
satisfies $\norm{\wab f_\delta }{p} \sim 1$   and
$\Omega_\varphi^k(f_\delta,\delta)_{\wab, q}   \geq c >0$ (see \lem{auxlemmatwox} for details).

The restriction $q\leq p$ in \ineq{aaaomega} is essential since
\[
 \sup_{f  \in\Bpab} \Omega_\varphi^k(f,\delta)_{\wab,q} = \infty, \quad \mbox{\rm if} \quad p<q .
 \]
This, of course, is expected since $\Wpab \not\subset \Wqab$, if $p<q$, and
 follows, for example, from \cor{auxcorone}.

 If $\alpha,\beta \geq 0$, then
   \be\label{aaaw}
 \sup_{f  \in\Bpab} \omega_\varphi^k(f,\delta)_{\wab,q} \sim 1 , \quad 1\leq q \leq p \leq \infty .
 \ee
 This follows from \ineq{aaaomega} and the observation that, for $\alpha,\beta \geq 0$, $\overrightarrow \Omega_\varphi^k(f,\delta)_{\wab,q} \leq  c \norm{\wab f}{q}$ and
 $\overleftarrow \Omega_\varphi^k(f,\delta)_{\wab,q} \leq  c \norm{\wab f}{q}$.

 In this paper, we show that if the suprema in \ineq{aaaomega} and \ineq{aaaw} are taken over the subset of $\Bpab$ consisting of all $k$-monotone functions, then these quantities become significantly smaller. This will allow us to obtain the exact rates (in some sense) of polynomial approximation in the weighted $\Lq$-norm of $k$-monotone functions in $\Bpab$.

Recall that   $f:I\to\R$ is said to be $k$-monotone on $I$ if its $k$th divided differences $[x_0,\ldots,x_k;f]$ are nonnegative for all selections of $k+1$ distinct points $x_0, \ldots, x_k$ in $I$, and denote by
$\M^k$ the set of all $k$-monotone functions on $(-1,1)$. In particular, $\M^0$, $\M^1$ and $\M^2$ are the sets of all nonnegative, nondecreasing and convex functions on $(-1,1)$, respectively. Note that if $f\in\M^k$, $k\geq 2$, then, for all $j\leq k-2$,
$f^{(j)}$ exists on $(-1,1)$   and is in $\M^{k-j}$.  In particular,   $f^{(k-2)}$ exists, is
convex, and therefore satisfies a Lipschitz condition on any closed
subinterval of $(-1,1)$, is absolutely continuous
on that subinterval, is continuous on $(-1,1)$, and  has left and right
(nondecreasing) derivatives, $f_-^{(k-1)}$ and $f_+^{(k-1)}$ on
$(-1,1)$. We also note that it is essential that   $(-1,1)$ and not $[-1,1]$ is used in the definition of $\M^k$ since the set of all $k$-monotone functions on the closed interval $[-1,1]$ contains only bounded functions
 (if $k\in\N$).

Our main result is

\begin{theorem} \label{maintheorem}
Let $k\in\N$,   $1\leq q <p\leq \infty$,   $\alpha,\beta \in J_p$, and $0<\delta< 1/4$. Then,
\be \label{inequalitym}
\sup_{f\in\Bpab \cap \M^k} \omega_\varphi^k(f,\delta)_{\wab,q} \sim
\left\{
\begin{array}{ll}
 \delta^{2/q-2/p}\,, &   \mbox{\rm if  $k\geq 2$ and $(k,q,p)\ne (2, 1, \infty),$}\\
\delta^{2}| \ln \delta |\,, &   \mbox{\rm if $k = 2$, $q=1$, $ p=\infty $, and $(\alpha,\beta)\ne (0,0)$, }\\
\delta^{2}   \,, &   \mbox{\rm if $k = 2$, $q=1$, $ p=\infty $, and $(\alpha,\beta)= (0,0)$, }\\
\delta^{2/q-2/p}\,, &   \mbox{\rm if   $k=1$ and $p < 2q$, }\\
 \delta^{1/q} \,, &  \mbox{\rm if $k=1$ and $p>2q$.}
\end{array} \right.
\ee
If $k=1$ and $p=2q$, then
\be \label{peq2q}
 c { \delta^{1/q}| \ln \delta |^{1/(2q)} \over |\ln|\ln \delta||^{\lambda/(2q)} } \leq \sup_{f\in\B_{2q}^{\alpha,\beta} \cap \M^1} \omega_\varphi^1(f,\delta)_{\wab,q} \leq c \delta^{1/q}  | \ln \delta |^{1/(2q)}, \quad \lambda > 1.
\ee
\end{theorem}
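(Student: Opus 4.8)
The plan is to prove the upper and lower estimates in \eqref{peq2q} separately; the case $k=1$, $p=2q$ is exactly where the two mechanisms governing the generic cases (``ramps'' giving $\delta^{2/q-2/p}$ and ``jumps'' giving $\delta^{1/q}$) balance, so that a logarithmic pile‑up over scales occurs. Throughout I set $a_+ := \beta + 1/(2q)$ and $a_- := \alpha + 1/(2q)$, both positive since $\alpha,\beta\in J_{2q}$. After normalizing $f(-1^+)=0$ (which changes neither side of \eqref{peq2q}, as $k$th differences annihilate constants), a function $f\in\M^1$ is nonnegative and nondecreasing, $d\mu:=df\ge 0$ is a nonnegative measure, and $\Delta_{h\varphi}^1(f,x)=\mu(I_x)$ with $I_x:=[x-h\varphi(x)/2,\,x+h\varphi(x)/2]$.

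For the upper bound I would first dispose of the one–sided terms: on $[1-2\delta^2,1]$ one has $\varphi\sim\delta$, and H\"older against $\norm{\wab f}{\Lq[1-2\delta^2,1]}\le 1$ over an interval of length $2\delta^2$ gives $\overleftarrow\Omega_\varphi^1(f,\delta)_{\wab,q}\le c\,\delta^{1/q}$ (and similarly for $\overrightarrow\Omega$), which is absorbed by the claimed bound. The heart is the main part. Partition $(-1,1)$ into the $\varphi$–adapted dyadic intervals $J_m$ on which $1-x\sim 2^{-m}$ (and symmetrically near $-1$), $0\le m\le M$ with $2^{-M}\sim\delta^2$, so $M\sim|\ln\delta|$; on $J_m$ one has $\wab\sim 2^{-m\beta}$, $\varphi\sim 2^{-m/2}$, and the step satisfies $h\varphi\le\delta\varphi\sim\delta 2^{-m/2}\le|J_m|$. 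Subdividing $J_m$ into $\sim|J_m|/(\delta\varphi)$ pieces of length $\sim\delta\varphi$ and writing $y_i:=\mu(\text{$i$th piece})$, nonnegativity of $\mu$ gives $\sum_i y_i\le\Delta_m$, where $\Delta_m$ is the increment of $f$ across a bounded enlargement of $J_m$; since $q\ge 1$, the embedding $\ell_1\hookrightarrow\ell_q$ yields $\sum_i y_i^q\le\Delta_m^q$. This is precisely where $1$‑monotonicity is used, and it produces the sharp ``jump‑type'' estimate
\[
\int_{J_m}\wab^q\,\mu(I_x)^q\,dx\;\le\; c\,\wab(J_m)^q\,\delta\varphi(J_m)\,\Delta_m^q\;\le\; c\,\delta\,\Delta_m^q\,2^{-mqa_+}.
\]
Setting $G_m:=F_m 2^{-ma_+}$ with $F_m\sim f|_{J_m}=\sum_{m'\le m}\Delta_{m'}$, one has $\Delta_m^q 2^{-mqa_+}=(G_m-2^{-a_+}G_{m-1})^q\le G_m^q$, so summing over $m$ gives $\Omega_\varphi^1(f,\delta)_{\wab,q}^q\le c\,\delta\sum_{m=0}^M G_m^q$. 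Two elementary facts then close the argument: H\"older over the $M+1\sim|\ln\delta|$ indices gives $\sum_m G_m^q\le (M+1)^{1/2}\big(\sum_m G_m^{2q}\big)^{1/2}$, while $\sum_m G_m^{2q}=\sum_m F_m^{2q}2^{-m(2q\beta+1)}\sim\int\wab^{2q}f^{2q}=\|\wab f\|_{2q}^{2q}=1$. Hence $\Omega_\varphi^1(f,\delta)_{\wab,q}^q\le c\,\delta|\ln\delta|^{1/2}$, i.e. $\le c\,\delta^{1/q}|\ln\delta|^{1/(2q)}$.

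For the lower bound I would exhibit one nondecreasing $f$ that saturates the chain above. The heuristic optimizer is the staircase with a single jump $\Delta_m\sim 2^{ma_+}(M+1)^{-1/(2q)}$ in each $J_m$ near $x=1$, for which $G_m$ is essentially constant, $\sum_m G_m^{2q}\sim 1$, and each scale contributes $\sim\delta(M+1)^{-1/2}$ to $\Omega^q$, summing to $\delta|\ln\delta|^{1/2}$. To keep an explicit function with manifestly finite norm I would instead take, near $x=1$ (truncated for $1-x\le\delta^2$ and continued nondecreasingly),
\[
f(x):=(1-x)^{-a_+}\Big(\ln\tfrac{e}{1-x}\Big)^{-1/(2q)}\Big(\ln\ln\tfrac{e^e}{1-x}\Big)^{-\lambda/(2q)},\qquad \lambda>1,
\]
and check $\|\wab f\|_{2q}\sim 1$; here $\lambda>1$ serves exactly to make $\int v^{-1}(\ln v)^{-\lambda}\,dv$ converge, the borderline $\lambda=1$ being divergent. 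Estimating $\Omega_\varphi^1(f,\delta)_{\wab,q}$ from below with the choice $h=\delta$ then yields the left‑hand side of \eqref{peq2q}.

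The main obstacle is the lower bound. For the smooth explicit $f$ the difference satisfies $\Delta_{\delta\varphi}^1(f,x)\approx\delta\varphi(x)f'(x)$, and for $q>1$ the resulting integral is dominated by the region nearest the endpoint rather than receiving comparable contributions from all $\sim|\ln\delta|$ scales; a naive estimate then recovers only $\delta^{1/q}$ and loses the logarithm entirely. Extracting the full $|\ln\delta|^{1/(2q)}$ requires the contributions of all scales to be of the same order, which is genuinely a jump/staircase phenomenon; reconciling this with an \emph{explicit}, norm‑controlled monotone function is what forces the doubly‑logarithmic factor $|\ln|\ln\delta||^{\lambda/(2q)}$ and leaves the gap between the two sides of \eqref{peq2q}. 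Verifying the lower modulus bound rigorously — that $\norm{\wab\Delta_{\delta\varphi}^1 f}{\Lq}$ is as large as claimed, with features at neighbouring scales not cancelling through the $\delta\varphi$‑step — is the most delicate point of the argument.
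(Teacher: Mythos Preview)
Your upper-bound argument is essentially correct and takes a different route from the paper. The paper first uses the pointwise inequality $|\Delta_\mu^1(f,x)|^q\le|\Delta_\mu^1(f^q,x)|$, valid for nonnegative $f$, to reduce at once to the case $q=1$ (with $p$ replaced by $p/q=2$), and then treats that case by a change of variables $y=x+\lambda\varphi(x)$ together with a binomial-series expansion of the resulting kernel. Your direct dyadic decomposition---using $\sum_i y_i^q\le(\sum_i y_i)^q$ on the increments within each scale and then H\"older across the $\sim|\ln\delta|$ scales---is more transparent and shows exactly where the factor $|\ln\delta|^{1/2}$ arises. Both give the same bound; yours bypasses the kernel analysis but needs a little more care at scale boundaries and with the supremum over $h\in(0,\delta]$. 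One caution: the normalization $f(-1^+)=0$ may fail, since a monotone $f\in\W_{2q}^{\alpha,\beta}$ need not be bounded below near $-1$; normalize at $0$ and split into the two half-intervals, as the paper does via its reduction to $\M_+^1$.

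The genuine gap is in the lower bound. You are right that the smooth candidate fails for $q>1$: since $\Delta_{\delta\varphi}^1(f,x)\approx\delta\varphi(x)f'(x)$, the integrand $\wab^q|\Delta_{\delta\varphi}^1(f)|^q$ behaves like $\delta^q(1-x)^{-(q+1)/2}(\ln)^{-1/2}(\ln\ln)^{-\lambda/2}$, which for $q>1$ concentrates at the endpoint and yields only $\Omega_\varphi^1(f,\delta)_{\wab,q}\sim\delta^{1/q}|\ln\delta|^{-1/(2q)}$, strictly \emph{worse} than $\delta^{1/q}$. But your reason for abandoning the staircase (``to keep an explicit function with manifestly finite norm'') is misplaced: the staircase norm is just as explicit. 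The paper carries out precisely your ``heuristic optimizer'': a piecewise constant $f$ on the Chebyshev partition, constant on each dyadic block $2^k\le i<2^{k+1}$ with values $f_i=2^{(2\beta+2/p)(m-k)}\zeta_k^{1/p}$; then $\norm{\wbb f}{p}^p\le c\sum_k\zeta_k$ while $\Omega_\varphi^1(f,\delta)_{\wbb,q}^q\ge c\,2^{-m}\sum_k\zeta_k^{1/2}$. Choosing $\zeta_k=(k+2)^{-1}(\ln(k+2))^{-\lambda}$ makes the first sum converge (this is exactly where $\lambda>1$ enters) and the second $\sim m^{1/2}(\ln m)^{-\lambda/2}$, which is the stated lower bound. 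Commit to the staircase and run this computation; the smooth function cannot recover the logarithm when $q>1$.
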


\begin{remark}
It is easy to see (and follows from Lemmas~\ref{auxnorms}, \ref{lemreduct} and \cor{corineq1}) that, for $k\in\N$,   $1\leq q \leq p\leq \infty$,   $\alpha,\beta \in J_p$, and $f\in \Wpab \cap \M^k$,
\[
\omega_\varphi^k(f,\delta)_{\wab,q} \leq c \norm{\wab f}{p}, \quad \delta > 0.
\]
Hence,  \thm{maintheorem} needs to be proved only for ``small'' $\delta$, and the restriction $\delta<1/4$ is chosen   for convenience only (to guarantee that none of the quantities in \ineq{inequalitym} and \ineq{peq2q} are zero while keeping them simple).
\end{remark}

In the case $\alpha=\beta=0$, all upper estimates and several lower estimates of \thm{maintheorem} were proved in \cite{k2009}, and so the upper estimates in \ineq{inequalitym} and \ineq{peq2q}   will only have to be established for $(\alpha,\beta)\ne (0,0)$ in the current paper.
 We remark that the fact that   the case $k=2$, $q=1$ and $p=\infty$ turned out to be anomalous for $(\alpha,\beta)\ne (0,0)$
causes rather significant difficulties in the proof  of \thm{maintheorem} for $k\geq 2$,  $q>1$ and $p=\infty$, since the rather simple main approach from \cite{k2009} can no longer be used.
(Section~\ref{newsectionpinf}  is devoted to  overcoming  these difficulties.)
We also note that the restriction $\alpha,\beta\in J_p$ in \thm{maintheorem}  guarantees that  the classes $\Bpab \cap \M^k$ contain   constants and so are rather rich. Without this restriction, we would have to deal with various anomalous situations and vacuous statements of theorems. For example, $\Bpab \cap \M^1 = \emptyset$ if $\alpha,\beta \leq -1/p$ since, in this case,   it is clear that $\Wpab \cap \M^1$ contains only  functions which are identically $0$ on $(-1,1)$. Similarly, it is possible to show that
$\Bpab \cap \M^2 = \emptyset$ if $\alpha,\beta \leq -1/p-1$. At the same time, putting restrictions on $\alpha$ and $\beta$ in the statements of some of our theorems would be a red herring
(\lem{lemineq1}, for example, is an illustration of this). Hence, an interested reader should keep in mind that even if a statement is given for all $\alpha,\beta \in \R$, it {\em may} happen that it only applies to trivial functions if  $\alpha,\beta\not\in J_p$.

It is convenient to denote
\be  \label{ups}
\Upsilon_\delta^{\alpha,\beta}(k,q,p)  :=
\left\{
\begin{array}{ll}
 \delta^{2/q-2/p}\,, &   \mbox{\rm if  $k\geq 2$, and $(k,q,p)\ne (2, 1, \infty),$}\\
\delta^{2}| \ln \delta |\,, &   \mbox{\rm if $k = 2$, $q=1$, $ p=\infty $, and $(\alpha,\beta)\ne (0,0)$ }\\
\delta^{2}   \,, &   \mbox{\rm if $k = 2$, $q=1$, $ p=\infty $, and $(\alpha,\beta)= (0,0)$, }\\
\delta^{2/q-2/p}\,, &   \mbox{\rm if   $k=1$ and $p < 2q$, }\\
 \delta^{1/q}  | \ln \delta |^{1/(2q)}   \,, &   \mbox{\rm if $k=1$ and $p=2q$,}\\
 \delta^{1/q} \,, &  \mbox{\rm if $k=1$ and $p>2q$.}
\end{array} \right.
\ee

The following is an immediate corollary of
 \thm{maintheorem}.

\begin{corollary} \label{maincor}
Let $k\in\N$,   $1\leq q <p\leq \infty$,   $\alpha,\beta \in J_p$, $f\in\M^k \cap\Wpab$ and $0<\delta< 1/4$. Then,
\be \label{maincorin}
\omega_\varphi^k(f,\delta)_{\wab,q} \leq c \Upsilon_\delta^{\alpha,\beta}(k,q,p) \norm{\wab f}{p}  ,
\ee
where  $\Upsilon_\delta^{\alpha,\beta}(k,q,p)$ which is defined in \ineq{ups}  is  best possible in the sense that \ineq{maincorin} is no longer valid if one increases (respectively, decreases) any of the powers of $\delta$ (respectively, $|\ln \delta|$) in its definition.
\end{corollary}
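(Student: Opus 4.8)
The plan is to read off both \ineq{maincorin} and its optimality directly from \thm{maintheorem}; the corollary is a formal consequence once we record two elementary facts about the objects involved. First, $\M^k$ is a cone, i.e.\ closed under multiplication by positive scalars, since $[x_0,\dots,x_k;cf]=c\,[x_0,\dots,x_k;f]\ge 0$ for $c>0$. Second, $f\mapsto\omega_\varphi^k(f,\delta)_{\wab,q}$ is positively homogeneous of degree one: by \ineq{mod} each symmetric, forward, and backward difference is linear in $f$, so multiplying $f$ by a positive constant multiplies every main-part modulus, and hence $\omega_\varphi^k(f,\delta)_{\wab,q}$ itself, by that same constant.

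For the upper bound, fix $f\in\M^k\cap\Wpab$. If $\norm{\wab f}{p}=0$ then $f=0$ a.e.\ and, because $q<\infty$ throughout, both sides of \ineq{maincorin} vanish. Otherwise set $g:=f/\norm{\wab f}{p}$; then $\norm{\wab g}{p}=1$ and $g\in\M^k$, so $g\in\Bpab\cap\M^k$. Homogeneity gives $\omega_\varphi^k(f,\delta)_{\wab,q}=\norm{\wab f}{p}\,\omega_\varphi^k(g,\delta)_{\wab,q}$, and bounding $\omega_\varphi^k(g,\delta)_{\wab,q}$ by $\sup_{h\in\Bpab\cap\M^k}\omega_\varphi^k(h,\delta)_{\wab,q}$, the upper estimates of \thm{maintheorem} (the right-hand inequality of \ineq{peq2q} when $k=1$, $p=2q$, and the $\sim$ relation in \ineq{inequalitym} otherwise) give $\omega_\varphi^k(g,\delta)_{\wab,q}\le c\,\Upsilon_\delta^{\alpha,\beta}(k,q,p)$. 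Multiplying by $\norm{\wab f}{p}$ yields \ineq{maincorin}; indeed $\Upsilon$ was defined in \ineq{ups} precisely so that its value coincides in every case with the upper bound furnished by \thm{maintheorem}.

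For optimality I would argue by contradiction. Let $\widetilde\Upsilon_\delta$ be obtained from $\Upsilon_\delta^{\alpha,\beta}(k,q,p)$ by increasing one of its powers of $\delta$ or decreasing one of its powers of $|\ln\delta|$; since $0<\delta<1/4$, in each case the gain $\Upsilon_\delta/\widetilde\Upsilon_\delta$ is either a positive power of $\delta^{-1}$ or a positive power of $|\ln\delta|$, and so tends to $\infty$ as $\delta\to 0^+$. The lower estimates of \thm{maintheorem} supply, for each small $\delta$, a near-extremizer $f_\delta\in\Bpab\cap\M^k$ (hence $\norm{\wab f_\delta}{p}=1$) with $\omega_\varphi^k(f_\delta,\delta)_{\wab,q}\ge c\,L_\delta$, where $L_\delta=\Upsilon_\delta^{\alpha,\beta}(k,q,p)$ in all cases except $k=1$, $p=2q$, for which \ineq{peq2q} gives $L_\delta=c\,\delta^{1/q}|\ln\delta|^{1/(2q)}\big/|\ln|\ln\delta||^{\lambda/(2q)}$ with $\lambda>1$. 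Were \ineq{maincorin} to hold with $\widetilde\Upsilon_\delta$ in place of $\Upsilon_\delta$, testing it on $f_\delta$ would force $L_\delta/\widetilde\Upsilon_\delta$ to stay bounded; but $L_\delta/\widetilde\Upsilon_\delta=(L_\delta/\Upsilon_\delta)(\Upsilon_\delta/\widetilde\Upsilon_\delta)\to\infty$, a contradiction.

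The only point needing attention is the case $k=1$, $p=2q$, where a genuine iterated-logarithm gap $|\ln|\ln\delta||^{\lambda/(2q)}$ separates the upper bound $\Upsilon_\delta$ from the lower bound $L_\delta$, so that $L_\delta/\Upsilon_\delta=c/|\ln|\ln\delta||^{\lambda/(2q)}\to 0$. I expect this to be the only mildly delicate step: one must check that this factor does not spoil optimality. Since $|\ln|\ln\delta||^{\lambda/(2q)}$ grows more slowly than any positive power of $|\ln\delta|$ and far more slowly than any positive power of $\delta^{-1}$, it is overwhelmed by the gain $\Upsilon_\delta/\widetilde\Upsilon_\delta$ whether that gain comes from raising the exponent of $\delta$ or from lowering the exponent of $|\ln\delta|$ below $1/(2q)$; in either subcase $L_\delta/\widetilde\Upsilon_\delta$ still diverges. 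Hence the exponents of $\delta$ and of $|\ln\delta|$ in $\Upsilon$ are sharp, and apart from this bookkeeping the corollary carries no real obstacle beyond \thm{maintheorem} itself.
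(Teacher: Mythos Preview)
Your proposal is correct and matches the paper's approach: the paper simply declares the corollary ``an immediate corollary of \thm{maintheorem}'' without further proof, and your normalization argument together with the optimality-by-contradiction is exactly the implicit reasoning. Your careful treatment of the $k=1$, $p=2q$ case---checking that the $|\ln|\ln\delta||^{\lambda/(2q)}$ gap between the upper and lower bounds in \ineq{peq2q} is negligible compared to any positive power of $|\ln\delta|$ or $\delta^{-1}$---is a point the paper leaves to the reader, so you have in fact spelled out more than the paper does.
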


\begin{remark}
The restriction $q < p$ in the statement of \thm{maintheorem} is essential since, if $p<q$, then \cor{auxcorone} implies that
\[
\sup_{f\in\Bpab \cap \M^k} \omega_\varphi^k(f,\delta)_{\wab,q} = \infty ,
\]
and, if $p=q$, then it is easy to see that
\[
\sup_{f\in\Bpab \cap \M^k} \omega_\varphi^k(f,\delta)_{\wab,p} \sim 1 .
\]
\end{remark}

Let $\Pn$ be the set of algebraic polynomials of degree at most $n$, and denote
\[
E_n(f)_{w,q} := \inf_{P\in\Pn}\norm{w(f-P)}{q}
\]
and
\[
\E (X, \Pn)_{w,q} := \sup_{f\in X} E_n(f)_{w,q} .
\]
It is rather well known that
\[
\E (\Bpab, \Pn)_{\wab,q} \sim  1, \quad 1\leq q\leq p \leq \infty.
\]
(This also follows from \ineq{aaaw}, \ineq{luther} and  Remark~\ref{lowpoly}.) At the same time, for the class of $k$-monotone functions from  $\Bpab$, we have the following result.

\begin{theorem}  \label{mpoly}
Let  $1\leq q < p\leq \infty$, $k\in\N$, and  $\alpha,\beta\geq 0$.   Then, for any $n\in\N$,
\be  \label{1.8}
\E (\M^k\cap \Bpab, \Pn)_{\wab,q} \sim
\begin{cases}
n^{-2/q+2/p}\,, &   \mbox{\rm if  $k\geq 2$  and $(k,q,p)\ne (2, 1, \infty),$}\\
n^{-2}   \,, &   \mbox{\rm if $k = 2$, $q=1$,   $ p=\infty $, and $\alpha=\beta=0$,}\\
n^{-\min\left\{ 2/q-2/p, 1/q\right\}     }\,, &   \mbox{\rm if   $k=1$ and $p \neq  2q$. }\\
\end{cases}
 \ee
If $k=2$, $q=1$, $p=\infty$ and $(\alpha,\beta)\neq (0,0)$, then
\be \label{1.9}
c n^{-2} \leq  \E(\M^2 \cap \B_\infty^{\alpha,\beta}, \Pn)_{\wab,1} \leq c n^{-2} \ln (n+1) .
\ee
If  $k=1$ and $p =2q $, then
\be \label{1.10}
c  n^{-1/q}  \leq \E(\M^1 \cap \B_{2q}^{\alpha,\beta}, \Pn)_{\wab,q}  \leq c  n^{-1/q}  [\ln (n+1)]^{1/(2q)} .
\ee
Additionally, if $q>1$, then for any $\e>0$,
\be \label{tmpin}
\limsup_{n\to \infty}  n^{1/q}  [\ln (n+1)]^{-1/(2q)+\e}    \E(\M^1 \cap \B_{2q}^{\alpha,\beta}, \Pn)_{\wab,q} = \infty .
\ee
 \end{theorem}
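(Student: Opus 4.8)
The plan is to transfer \thm{maintheorem} to polynomial approximation through the standard pair of direct and weak converse estimates for the weighted Ditzian--Totik moduli, both of which are available for $\alpha,\beta\ge0$. For the upper bounds I would combine the Jackson-type inequality $E_n(f)_{\wab,q}\le c\,\omega_\varphi^k(f,1/n)_{\wab,q}$ with \cor{maincor}: since $f\in\M^k\cap\Bpab$ satisfies $\norm{\wab f}{p}=1$,
\[
E_n(f)_{\wab,q}\le c\,\omega_\varphi^k(f,1/n)_{\wab,q}\le c\,\Upsilon_{1/n}^{\alpha,\beta}(k,q,p),
\]
and taking the supremum over the class gives $\E(\M^k\cap\Bpab,\Pn)_{\wab,q}\le c\,\Upsilon_{1/n}^{\alpha,\beta}(k,q,p)$. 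Reading off the cases of \ineq{ups} with $\delta=1/n$ then yields at once all upper bounds in \ineq{1.8}, the bound $c\,n^{-2}\ln(n+1)$ in \ineq{1.9}, and the bound $c\,n^{-1/q}[\ln(n+1)]^{1/(2q)}$ in \ineq{1.10}.

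For the lower bounds I would set $A_n:=\E(\M^k\cap\Bpab,\Pn)_{\wab,q}$, note that $A_n$ is nonincreasing, and start from the weak converse inequality $\omega_\varphi^k(f,1/n)_{\wab,q}\le c\,n^{-k}\sum_{0\le j\le n}(j+1)^{k-1}E_j(f)_{\wab,q}$. Taking the supremum over $f\in\M^k\cap\Bpab$ and applying the lower estimate of \thm{maintheorem} on the left gives
\[
\Upsilon_{1/n}^{\alpha,\beta}(k,q,p)\le c\,n^{-k}\sum_{0\le j\le n}(j+1)^{k-1}A_j.
\]
Whenever $\Upsilon$ reduces to a pure power $\delta^{s}$ with $s<k$ (all the $\sim$ cases of \ineq{1.8} except the saturation cases $s=k$), I would split the sum at $j=\lfloor \e n\rfloor$: the already-proved upper bounds $A_j\le c\,j^{-s}$ make the head contribute at most $c\,\e^{\,k-s}n^{-s}\le\tfrac12\Upsilon_{1/n}^{\alpha,\beta}(k,q,p)$ once $\e$ is fixed small, while monotonicity bounds the tail by $c\,n^k A_{\lfloor\e n\rfloor}$. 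Hence $A_{\lfloor\e n\rfloor}\ge c\,\Upsilon_{1/n}^{\alpha,\beta}(k,q,p)$, which after rescaling $n$ is exactly the matching lower bound.

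The remaining lower bounds are the hard part, and they are precisely the cases where this bootstrapping fails because the weighted sum resonates and manufactures a spurious logarithm: the saturation cases $s=k$ (namely $k=2,q=1,p=\infty$ and $k=1,q=1,p>2$) and the borderline weight $p=2q$. For these I would construct, for each $n$, an explicit extremal $k$-monotone function $f_n\in\Bpab$ — built from the singular profiles already used to prove the lower bounds of \thm{maintheorem} (and, for $\alpha=\beta=0$, drawing on \cite{k2009}) — and estimate $E_n(f_n)_{\wab,q}$ directly from below, exploiting that a polynomial of degree $n$ cannot follow the jump of $f_n^{(k-1)}$ at scale $1/n$. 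This yields the log-free bounds $c\,n^{-2}$ in \ineq{1.9} and $c\,n^{-1/q}$ in \ineq{1.10}, as well as the matching powers in the $s=k$ lines of \ineq{1.8}.

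Finally, \ineq{tmpin} sharpens the $p=2q$ lower bound and is where the iterated-logarithm precision of \ineq{peq2q} enters. I would argue by contradiction: if $\limsup_n n^{1/q}[\ln(n+1)]^{-1/(2q)+\e}A_n<\infty$, then $A_n\le c\,n^{-1/q}[\ln(n+1)]^{1/(2q)-\e}$, and feeding this into the weak converse with $k=1$ and $q>1$ (so that $\sum_{1\le j\le n}j^{-1/q}[\ln j]^{1/(2q)-\e}\sim n^{1-1/q}[\ln n]^{1/(2q)-\e}$) would force $\sup_{f}\omega_\varphi^1(f,1/n)_{\wab,q}\le c\,n^{-1/q}[\ln n]^{1/(2q)-\e}$, contradicting \ineq{peq2q} because $[\ln n]^{\e}$ eventually dominates any power of $|\ln|\ln n||$. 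The main obstacle throughout is exactly this borderline/saturation analysis — both the explicit constructions producing the log-free lower bounds and the delicate matching of logarithmic factors for $p=2q$ — whereas the generic $\sim$ cases follow routinely from the direct/converse machinery together with \thm{maintheorem}.
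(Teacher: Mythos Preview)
Your upper bounds and your treatment of \ineq{tmpin} are exactly what the paper does: the Jackson estimate \ineq{luther} plus \cor{maincor} for the former, and the weak converse \ineq{dtinv} plus the lower bound \ineq{peq2q} for the latter.

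Where you diverge is in the lower bounds for the ``generic'' cases of \ineq{1.8}. Your bootstrapping argument --- feeding the lower modulus estimate of \thm{maintheorem} into \ineq{dtinv}, splitting the sum at $\lfloor\e n\rfloor$, and absorbing the head via the already-proved upper bound --- is correct and gives $A_{\lfloor\e n\rfloor}\ge cn^{-s}$ whenever $s<k$. The paper does not do this. Instead it proves \emph{all} the lower bounds in \ineq{1.8}--\ineq{1.10} uniformly by a single direct construction: for $f(x)=(x-\xi)_+^{k-1}$ one shows (\lem{eneg}) via a Remez-type inequality that $E_n(f)_{\wab,q}\ge cn^{-k+1-1/q}(1-\xi)^{\beta+(k-1)/2+1/(2q)}$, and then specializes to $\xi=0$ (\cor{corrr1}, giving the $n^{-1/q}$ bounds) and to $\xi=1-2k^2n^{-2}$ (\cor{corrr2}, giving the $n^{-2/q+2/p}$ bounds). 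So your split into ``easy generic'' versus ``hard saturation/borderline'' is somewhat artificial: in the paper the saturation cases you single out ($k=2,q=1,p=\infty$ and $k=1,q=1,p>2$) and the borderline $p=2q$ are no harder than the rest --- the same truncated-power construction covers everything at once. Your bootstrapping is slicker where it applies and avoids the Remez machinery, but since you still need explicit constructions for the remaining cases (and you correctly identify that the relevant test functions are the same truncated powers used for the modulus lower bounds), you end up needing essentially the content of \lem{eneg} anyway. The paper's route is therefore more economical overall, while yours makes the role of \thm{maintheorem} more transparent in the non-saturated regime.
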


In the case $\alpha=\beta=0$, \ineq{1.8} and the lower estimate in \ineq{1.10} were proved by Konovalov, Leviatan and Maiorov in  \cite[Theorem 1]{klm}.
The upper estimate in \ineq{1.10} and \ineq{tmpin} improve corresponding estimates in \cite[Theorem 1]{klm} (considered there in the special case  $\alpha=\beta=0$).

We remark that it is an open problem if $\ln(n+1)$ in \ineq{1.9} can be replaced by a smaller quantity or removed altogether, and if $[\ln(n+1)]^{1/2}$ is necessary in \ineq{1.10} in the case $(k,q,p)=(1,1,2)$. Also, while it follows from \ineq{tmpin} that, in the case $q>1$, the quantity $[\ln (n+1)]^{1/(2q)}$ in \ineq{1.10} cannot be replaced by $[\ln (n+1)]^{1/(2q)-\e}$ with $\e>0$, the precise behavior of
$\E(\M^1 \cap \B_{2q}^{\alpha,\beta}, \Pn)_{\wab,q}$ is still unknown. (See Section~\ref{secseven} for more details.)

Finally, we mention that several other applications of \thm{maintheorem} are given in Section~\ref{sec7}.

\sect{``Truncated'' $k$-monotone functions}

For  $k\geq 1$, we denote
\[
\M_+^k := \left\{ f\in\M^k \st f(x) =0,   \quad\mbox{\rm for all } x\in (-1,0]     \right\}.
\]
Note that, if $f\in\M_+^k$, then $f^{(i)}(0) = 0$, $0\leq i \leq k-2$, and  $f_-^{(k-1)}(0) = 0$.

In this section, we prove that it is sufficient to consider   classes $\M_+^k$ instead of $\M^k$ in \thm{maintheorem}  (see \lem{lemreduct}). This will significantly simplify the proofs of upper estimates.

\begin{lemma} \label{auxnorms}
Let $k\in\N$, $1\leq p \leq \infty$, $\alpha,\beta \in J_p$,  and  $f\in\M^k \cap \Wpab$. Then
\[
\norm{\wab  T_{k-1}(f)  }{p} \leq c \norm{\wab f}{p} ,
\]
where
\be \label{taylor}
T_{k-1}(f,x) :=   f_-^{(k-1)}(0)    x^{k-1}/(k-1)! + \sum_{i=0}^{k-2}  f^{(i)}(0)   x^i/ i!.
\ee
\end{lemma}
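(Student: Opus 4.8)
The plan is to reduce the weighted global inequality to a finite set of scalar estimates on the Taylor data of $f$ at the origin, and then to use the $k$-monotonicity in an essential way to control that data by a local $\Lp$-norm of $f$. First I would note that $T_{k-1}(f)\in\Poly_{k-1}$ and that, since $\alpha,\beta\in J_p$, the weight satisfies $\norm{\wab}{p}<\infty$. Hence $\norm{\wab T_{k-1}(f)}{p}\le\norm{\wab}{p}\,\|T_{k-1}(f)\|_{\Lp[-1,1]}\le c\,\|T_{k-1}(f)\|_{\Lp[-1,1]}$, and, because all norms on the finite-dimensional space $\Poly_{k-1}$ are equivalent, $\|T_{k-1}(f)\|_{\Lp[-1,1]}\le c\,\max_i |c_i|$, where $c_i$ are the coefficients of $T_{k-1}(f)$; equivalently this is $c\,\max\bigl\{|f^{(i)}(0)|:0\le i\le k-2\bigr\}$ together with $|f_-^{(k-1)}(0)|$. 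On the other hand $\wab\sim 1$ on $[-3/4,3/4]$, so $\norm{\wab f}{p}\ge c\,\|f\|_{\Lp[-3/4,3/4]}$. Thus it suffices to prove the scalar bounds $|f^{(i)}(0)|\le c\,\|f\|_{\Lp[-3/4,3/4]}$ for $0\le i\le k-2$ and $|f_-^{(k-1)}(0)|\le c\,\|f\|_{\Lp[-3/4,3/4]}$.

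The crucial point is that the point-derivative functionals $f\mapsto f^{(i)}(0)$ are \emph{unbounded} on $\Lp$, so these estimates must exploit the cone structure. I would treat the top order first, where the structure is cleanest: since $g:=f^{(k-2)}$ is convex on $(-1,1)$ and $\bigl(T_{k-1}(f)\bigr)^{(k-2)}(x)=g(0)+g_-'(0)x$ is exactly the supporting line of $g$ at $0$, the quantities $f^{(k-2)}(0)=g(0)$ and $f_-^{(k-1)}(0)=g_-'(0)$ are governed by convexity. Jensen's inequality gives the upper bound $g(0)\le \tfrac{1}{2\rho}\int_{-\rho}^{\rho}g$, the reverse-convexity estimate $g(0)\ge 2g(t)-g(2t)$ gives a matching lower bound after averaging in the free node $t$, and the one-sided difference quotients $\tfrac{g(0)-g(-t)}{t}\le g_-'(0)\le \tfrac{g(t)-g(0)}{t}$ control the slope. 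Averaging each free node over a fixed subinterval of $[-3/4,3/4]$ and applying H\"older's inequality converts all of these into bounds by $c\,\|f^{(k-2)}\|_{\Lp[-3/4,3/4]}$.

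It then remains to propagate these bounds downward, that is, to control $\|f^{(k-2)}\|$ and the lower values $f^{(i)}(0)$, $0\le i\le k-3$, by $\|f\|$ itself. Here I would use that $f^{(i)}\in\M^{k-i}$ for each $i\le k-2$, so that every derivative of $f$ is piecewise monotone with a number of monotonicity pieces bounded in terms of $k$; testing $f^{(i)}$ against fixed smooth functions and integrating by parts shows that weighted averages of $f^{(i)}$ over fixed subintervals of $[-3/4,3/4]$ are dominated by $\|f\|_{\Lp}$, after which one passes from averages to the value at $0$ using the interior regularity of $k$-monotone functions. I expect the main obstacle to be precisely this last step for the lower orders: unlike $f^{(k-2)}$, the derivatives $f^{(i)}$ with $i\le k-3$ are neither monotone nor convex, so bounding their value at the single point $0$ by an $\Lp$-norm of $f$ genuinely requires the positivity of the $k$-th divided difference to rule out thin tall spikes (a piecewise-monotone function with few pieces can otherwise have large values and small $\Lp$-norm). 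I would resolve this either through the integral (Lévy--Williamson-type) representation of $k$-monotone functions as nonnegative combinations of truncated powers $(x-t)_+^{k-1}$, or by sandwiching the top-order divided difference, whose monotonicity in its nodes does follow from $k$-monotonicity, between fixed nearby configurations and integrating the resulting bound down through $f^{(k-2)},\dots,f$. The one-sidedness of $f_-^{(k-1)}(0)$ (forcing the confluent limit to be taken from the left) and the endpoint case $p=\infty$, where $J_\infty=[0,\infty)$ and H\"older must be replaced by direct sup-norm estimates, are the remaining technical points to check.
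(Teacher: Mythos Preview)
Your reduction is essentially the paper's proof. A small slip first: the inequality $\norm{\wab T_{k-1}(f)}{p}\le\norm{\wab}{p}\,\|T_{k-1}(f)\|_{\Lp[-1,1]}$ is not true as written for $p<\infty$ (you cannot split the $\Lp$-norm of a product like that). What you need, and what the paper writes, is $\norm{\wab T_{k-1}(f)}{p}\le\norm{\wab}{p}\,\|T_{k-1}(f)\|_{\LL_\infty[-1,1]}$, followed by the norm equivalence $\|T_{k-1}(f)\|_{\LL_\infty}\le c\|T_{k-1}(f)\|_{\Lp[-1/2,1/2]}$ on $\Poly_{k-1}$. With that fix, your chain and the paper's chain coincide: both reduce the lemma to the unweighted local estimate
\[
\|T_{k-1}(f)\|_{\Lp[I]}\le c\,\|f\|_{\Lp[I]}\qquad (I\ \text{a fixed interior interval}),
\]
using only $\wab\sim 1$ on $I$ and $\norm{\wab}{p}<\infty$. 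The paper then simply cites \cite[Lemma~3.7]{k2009} for this local estimate and stops.

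Where your proposal differs is that you try to \emph{prove} this cited lemma from scratch by bounding each Taylor coefficient. Your convexity argument for the top two orders is correct, but it yields $|f^{(k-2)}(0)|,\ |f_-^{(k-1)}(0)|\le c\,\|f^{(k-2)}\|_{\Lp[I]}$, not a bound by $\|f\|_{\Lp[I]}$; and the ``propagation downward'' to $\|f\|_{\Lp}$ and to the lower-order values $f^{(i)}(0)$, $i\le k-3$, is precisely where you stop, flag it as the main obstacle, and offer only heuristic routes (L\'evy--Williamson, divided-difference sandwiching). That step is a genuine lemma, not a formality: as you yourself note, piecewise monotonicity with boundedly many pieces does \emph{not} by itself control a point value by an $\Lp$-norm, so the full strength of $k$-monotonicity is needed. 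So your proposal is correct in strategy and matches the paper exactly up to the point where the paper invokes \cite{k2009}; the remaining part is not wrong, but it is unfinished at the one nontrivial step. If you want a self-contained argument, the cleanest route is not to bound coefficients individually but to bound $\|T_{k-1}(f)\|_{\Lp[I]}$ directly, exploiting the sign structure of $f-T_{k-1}(f)$ (nonnegative on $[0,1)$, sign $(-1)^k$ on $(-1,0]$) that follows from $k$-monotonicity together with $T_{k-1}(f)$ being the confluent Hermite interpolant at $0$; this is essentially what \cite[Lemma~3.7]{k2009} does.
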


\begin{proof}
It follows
from \cite[Lemma 3.7]{k2009} that
$\norm{T_{k-1}(f)}{\Lp[-1/2,1/2]} \leq c \norm{f}{\Lp[-1/2,1/2]}$.
Therefore, taking into account that $\norm{\wab  }{p} \sim 1$ and $\wab(x) \sim 1 $ on $[-1/2, 1/2]$, we have
\[
\norm{\wab  T_{k-1}(f)  }{p}     \leq  c \norm{  T_{k-1}(f)  }{\infty}     \leq c \norm{T_{k-1}(f)}{\Lp[-1/2,1/2]} \leq c \norm{f}{\Lp[-1/2,1/2]}
  \leq   c \norm{\wab f}{p} ,
\]
where we used the fact that, for any   $p_{k-1}\in\Poly_{k-1}$  and $I\subseteq J$,
\[
\norm{p_{k-1}(f)}{\LL_\infty(J)} \leq c    \norm{p_{k-1}(f)}{\Lp(I)} , \quad c=c\left(k, |I|,|I|/|J|\right),
\]
which follows, for example, from \cite[(4.2.10) and (4.2.14)]{dl}.
\end{proof}

The following lemma can be easily proved by induction.
\begin{lemma}
Let  $f\in\M^k$, $k\in\N$, be such that $f^{(i)}(0) = 0$, $0\leq i \leq k-2$, and $f_-^{(k-1)}(0) = 0$. Then
$f$ is $j$-monotone on $[0,1)$ and $(-1)^{k-j}f$ is $j$-monotone on $(-1,0]$, for all $0\leq j\leq k-1$.
\end{lemma}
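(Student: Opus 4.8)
The plan is to induct on $k$, reducing the assertion for $f\in\M^k$ to the same assertion for its derivative $g:=f'$, which lies in $\M^{k-1}$. The only tool needed beyond the structural facts about $k$-monotone functions recalled in the Introduction is the elementary equivalence: for $1\le m\le k-1$, a function $F$ is $m$-monotone on an interval $I$ if and only if $F'$ is $(m-1)$-monotone on $I$ (equivalently, $F^{(m-1)}$ is nondecreasing on $I$). For the orders $m\le k-1$ that occur here this is immediate from the divided-difference definition, because then $F^{(m-1)}$ exists and is continuous.

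The base cases $k=1$ and $k=2$ are checked directly. If $k=1$, then $f$ is nondecreasing with $f(0)=0$, so $f\ge 0$ on $[0,1)$ and $f\le 0$ on $(-1,0]$, which are exactly the assertions for $j=0$. If $k=2$, then $f$ is convex with $f(0)=0$ and $f_-'(0)=0$; since $f_\pm'$ are nondecreasing and vanish appropriately at $0$, the function $f$ is nondecreasing on $[0,1)$ and nonincreasing on $(-1,0]$, and combining this with $f(0)=0$ yields all four assertions ($j=0,1$).

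For the inductive step I take $k\ge 3$ and assume the lemma for $k-1$. Since $f\in\M^k$, the derivative $g=f'$ lies in $\M^{k-1}$, and the hypotheses transfer verbatim: $g^{(i)}(0)=f^{(i+1)}(0)=0$ for $0\le i\le k-3$, and $g_-^{(k-2)}(0)=f_-^{(k-1)}(0)=0$. The induction hypothesis then gives, for every $0\le j\le k-2$, that $g$ is $j$-monotone on $[0,1)$ and that $(-1)^{(k-1)-j}g$ is $j$-monotone on $(-1,0]$. Applying the equivalence with $m=j+1$ to $F=f$ on $[0,1)$, the statement that $g=f'$ is $j$-monotone is precisely that $f$ is $(j+1)$-monotone on $[0,1)$; letting $j$ run through $0,\dots,k-2$ covers $f$ being $m$-monotone on $[0,1)$ for all $1\le m\le k-1$. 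On the left interval the same equivalence applied to $F=(-1)^{(k-1)-j}f$, whose derivative is $(-1)^{(k-1)-j}g$, shows that $(-1)^{(k-1)-j}f$ is $(j+1)$-monotone on $(-1,0]$; since $(-1)^{(k-1)-j}=(-1)^{k-(j+1)}$, this is exactly that $(-1)^{k-m}f$ is $m$-monotone on $(-1,0]$ for $1\le m\le k-1$. Finally, the two missing cases $j=0$ follow from the cases $j=1$ just obtained together with $f(0)=0$: on $[0,1)$ the function $f$ is nondecreasing with $f(0)=0$, hence $f\ge 0$; on $(-1,0]$ the function $(-1)^{k-1}f$ is nondecreasing with value $0$ at the origin, hence $(-1)^{k-1}f\le 0$, i.e. $(-1)^{k}f\ge 0$.

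The step I expect to require the most care is the equivalence between $m$-monotonicity (defined through divided differences) and the monotonicity of $F^{(m-1)}$, and in particular its correct handling at the shared endpoint $x=0$ and in the transfer of the top-order hypothesis $g_-^{(k-2)}(0)=f_-^{(k-1)}(0)=0$, where only one-sided derivatives of $f^{(k-2)}$ are available. Everything else is bookkeeping of signs, and carrying one derivative at a time through the induction on $k$ is precisely what keeps the alternating pattern $(-1)^{k-j}$ transparent.
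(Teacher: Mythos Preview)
Your proof is correct and follows precisely the inductive approach the paper itself indicates; the paper omits the argument entirely, remarking only that the lemma ``can be easily proved by induction,'' and you have supplied exactly that induction on $k$, passing to $g=f'\in\M^{k-1}$ and bookkeeping the sign $(-1)^{k-j}$. The only point requiring care, as you rightly flag, is the equivalence between $m$-monotonicity of $f$ and $(m-1)$-monotonicity of $f'$ for $1\le m\le k-1$, which is justified here because $f^{(m-1)}$ exists and is continuous on $(-1,1)$ in this range.
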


\begin{corollary} \label{cormplus}
If $k\in\N$ and $f\in \M_+^k$, then $f\in \M_+^j$, for all $0\leq j \leq k-1$.
\end{corollary}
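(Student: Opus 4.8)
The plan is to disentangle the two requirements hidden in ``$f\in\M_+^j$'': that $f$ vanish on $(-1,0]$, and that $f$ be $j$-monotone on all of $(-1,1)$. The first requirement is immediate, since $f\in\M_+^k$ already gives $f\equiv 0$ on $(-1,0]$, so the entire content is to upgrade local $j$-monotonicity on $[0,1)$ to global $j$-monotonicity on $(-1,1)$. To obtain the local statement for free I would invoke the preceding Lemma: because $f\in\M_+^k$ satisfies $f^{(i)}(0)=0$ for $0\le i\le k-2$ and $f_-^{(k-1)}(0)=0$, that Lemma applies and shows that $f$ is $j$-monotone on $[0,1)$ for every $0\le j\le k-1$ (while $f$, being identically $0$, is trivially $j$-monotone on $(-1,0]$). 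For $j=0$ nothing more is needed, since $f\ge 0$ on $[0,1)$ and $f=0$ on $(-1,0]$ give $f\in\M^0$; the work is in the cases $j\ge 1$.

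For $j\ge 1$ I would pass to the derivative characterization of $j$-monotonicity: a function is $1$-monotone iff it is nondecreasing, and (here $f\in C^{k-2}\subseteq C^{j-2}$, so $f^{(j-2)}$ exists and is continuous) it is $j$-monotone for $j\ge 2$ iff $f^{(j-2)}$ is convex. Writing $g:=f$ when $j=1$ and $g:=f^{(j-2)}$ when $j\ge 2$, the facts already secured read: $g\equiv 0$ on $(-1,0]$; $g(0)=0$ (as $j-2\le k-2$); and $g$ is nondecreasing (resp.\ convex) on $[0,1)$. What remains is to glue these two pieces across the origin, and this gluing is the whole difficulty --- indeed it is exactly what fails for a general $f\in\M^k$, so the vanishing hypotheses at $0$ must be used here in an essential way.

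The hard part is therefore the ``kink condition'' at the origin. For $g$ to be nondecreasing (resp.\ convex) on $(-1,1)$, given that it is so on each of $(-1,0]$ and $[0,1)$ and continuous at $0$, I need that it does not decrease across $0$ (resp.\ that its left derivative does not exceed its right derivative there); since $g\equiv 0$ to the left, this reduces to verifying $g\ge 0$ on $[0,1)$ when $j=1$ and $g_+'(0)=f_+^{(j-1)}(0)\ge 0$ when $j\ge 2$. Both follow by applying the preceding Lemma one order lower: $f$ is also $(j-1)$-monotone on $[0,1)$, which together with the vanishing derivatives $f^{(i)}(0)=0$ ($0\le i\le j-1$) forces $f^{(j-2)}\ge 0$ on $[0,1)$ (for $j=1$ this is just $f\ge0$; for $j\ge 3$ one uses that $(j-1)$-monotonicity makes $f^{(j-3)}$ convex, hence $f^{(j-2)}$ nondecreasing, and then $f^{(j-2)}(0)=0$ gives nonnegativity). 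A nonnegative $g$ with $g(0)=0$ has $g_+'(0)=\lim_{h\to0^+}g(h)/h\ge 0$, which is precisely the required sign. Hence $g$ is nondecreasing (resp.\ convex) on $(-1,1)$, so $f\in\M^j$, and with $f\equiv 0$ on $(-1,0]$ this yields $f\in\M_+^j$. The only auxiliary fact to pin down is the converse direction ``$f^{(j-2)}$ convex $\Rightarrow f\in\M^j$'' of the characterization; this is standard and, if no reference is quoted, can be obtained by mollifying $f$ on a slightly smaller interval so that $f_\varepsilon^{(j)}=(f^{(j-2)})''*\rho_\varepsilon\ge 0$, whence $[t_0,\dots,t_j;f_\varepsilon]\ge 0$, and letting $\varepsilon\to 0$ by continuity of divided differences in $f$.
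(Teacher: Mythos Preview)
Your argument is correct: the reduction to gluing $g=f^{(j-2)}$ (or $g=f$ when $j=1$) across the origin is sound, the sign condition $g_+'(0)\ge 0$ is verified properly, and the mollification step legitimately recovers $f\in\M^j$ from the convexity of $f^{(j-2)}$. The paper itself gives no explicit proof, listing the Corollary immediately after the Lemma as a direct consequence; your elaboration is consistent with that implied reasoning and fills in the gluing step the paper leaves to the reader.

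It is worth noting, however, that a much shorter route is available which bypasses both the Lemma and the mollification entirely, working directly with divided differences. It suffices to show $\M_+^k\subset\M_+^{k-1}$ and iterate. Given $f\in\M_+^k$ and $k$ points $x_0<\cdots<x_{k-1}$ in $(-1,1)$, pick $y_0\in(-1,0)$ with $y_0<x_0$; since $f\in\M^k$, the recursion for divided differences gives
\[
[x_0,\dots,x_{k-1};f]\ \ge\ [y_0,x_0,\dots,x_{k-2};f].
\]
Repeating with $y_1<y_0$, $y_2<y_1$, \dots, each time replacing the rightmost remaining $x_i$ by a new node in $(-1,0)$, after at most $k$ steps all $k$ nodes lie in $(-1,0)$, where $f\equiv 0$, so the final divided difference vanishes. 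Hence $[x_0,\dots,x_{k-1};f]\ge 0$, i.e.\ $f\in\M^{k-1}$, and since $f$ still vanishes on $(-1,0]$ we get $f\in\M_+^{k-1}$. This is entirely elementary---just the recursion identity for divided differences---and avoids invoking the convexity characterization of $\M^j$ and justifying its converse direction.
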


\begin{lemma} \label{lemreduct}
Let $k\in\N$,   $1\leq q <p\leq \infty$,   $\alpha,\beta \in J_p$, and $\delta>0$.
Then
\[
\sup_{f\in\Bpab \cap \M^k} \omega_\varphi^k(f,\delta)_{\wab,q} \sim \sup_{f\in\Bpab \cap \M_+^k} \omega_\varphi^k(f,\delta)_{\wab,q}
+ \sup_{f\in\Bpba\cap \M_+^k} \omega_\varphi^k(f,\delta)_{\wba,q} .
\]
\end{lemma}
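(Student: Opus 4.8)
For $k\in\N$, $1\leq q<p\leq\infty$, $\alpha,\beta\in J_p$, $\delta>0$,
$$\sup_{f\in\Bpab \cap \M^k} \omega_\varphi^k(f,\delta)_{\wab,q} \sim \sup_{f\in\Bpab \cap \M_+^k} \omega_\varphi^k(f,\delta)_{\wab,q} + \sup_{f\in\Bpba\cap \M_+^k} \omega_\varphi^k(f,\delta)_{\wba,q}.$$

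Let me understand the claim. We want to show that to compute the supremum of the weighted modulus over $k$-monotone functions on the unit sphere, it suffices to consider "truncated" functions — those that vanish on $(-1,0]$ (which form $\M_+^k$). The right-hand side has two terms: one for functions vanishing on the left half (with weight $\wab$), and one symmetric version. The $\wba = w_{\beta,\alpha}$ term handles functions that vanish on the right half, by reflection $x\mapsto -x$ (which swaps $\alpha$ and $\beta$).

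**Key decomposition idea.** Given $f\in\M^k$, the natural move is to write $f = T_{k-1}(f) + g$, where $T_{k-1}(f)$ is the "Taylor-like" polynomial from \ineq{taylor}. The remainder $g := f - T_{k-1}(f)$ satisfies $g^{(i)}(0)=0$ for $0\leq i\leq k-2$ and $g_-^{(k-1)}(0)=0$. Then I would further split $g = g_+ + g_-$ where $g_+ := g\cdot\chi_{[0,1)}$ vanishes on $(-1,0]$ and $g_- := g\cdot\chi_{(-1,0]}$ vanishes on $[0,1)$. By the Lemma preceding \cor{cormplus}, $g$ is $k$-monotone on $[0,1)$, so $g_+\in\M_+^k$; similarly $(-1)^{\text{shift}}g_-$ reflects to an element of $\M_+^k$ with swapped weight.

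**Proof plan.**

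First I would handle the Taylor polynomial contribution. Since $\omega_\varphi^k$ annihilates polynomials of degree $\leq k-1$ (the $k$th difference of such a polynomial is zero), $T_{k-1}(f)$ contributes nothing to $\omega_\varphi^k(f,\delta)_{\wab,q}$. Hence $\omega_\varphi^k(f,\delta)_{\wab,q} = \omega_\varphi^k(g,\delta)_{\wab,q}$ where $g = f - T_{k-1}(f)$. By \lem{auxnorms} and the triangle inequality, $\norm{\wab g}{p}\leq \norm{\wab f}{p} + \norm{\wab T_{k-1}(f)}{p}\leq c\norm{\wab f}{p} = c$, so $g$ lies in a bounded ball of $\Wpab$ (up to the constant $c$), and $g$ is $k$-monotone on each half with the sign-corrected reflection property.

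Second, for the upper bound of the left side by the right side ($\lesssim$), I would use the splitting $g = g_+ + g_-$ and subadditivity of the modulus: $\omega_\varphi^k(g,\delta)_{\wab,q}\leq \omega_\varphi^k(g_+,\delta)_{\wab,q} + \omega_\varphi^k(g_-,\delta)_{\wab,q}$. For the $g_+$ term, since $g_+\in\M_+^k$ and $\norm{\wab g_+}{p}\leq c$, normalizing by its $\Wpab$-norm places $g_+/\norm{\wab g_+}{p}$ in $\Bpab\cap\M_+^k$, giving $\omega_\varphi^k(g_+,\delta)_{\wab,q}\leq c\sup_{\Bpab\cap\M_+^k}\omega_\varphi^k$. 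For the $g_-$ term I apply the reflection $R f(x):=f(-x)$, which swaps $\alpha\leftrightarrow\beta$ in the weight and (up to sign) maps $\M_+^k$-type functions on the left to $\M_+^k$-type functions on the right; this is the source of the $\wba$ term. The main technical point is verifying that $\omega_\varphi^k$ and the $\Wpab$-norm transform correctly under reflection (the weight $\wab$ becomes $\wba$ and the modulus is invariant because $\varphi$ is symmetric and the forward/backward differences swap roles) — this needs the explicit symmetry of the definitions in \ineq{mod}.

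Third, for the reverse inequality ($\gtrsim$), this is easy: $\M_+^k\subset\M^k$ and $\Bpab\cap\M_+^k\subset\Bpab\cap\M^k$, so each term on the right is bounded by the left side (the second term after applying the inverse reflection). \textbf{The main obstacle} I anticipate is the careful bookkeeping in the reflection step — ensuring the sign factor $(-1)^{k-j}$ from the preceding lemma is handled so that $g_-$ (or its reflection) is genuinely $k$-monotone rather than merely $\pm$ $k$-monotone, and confirming that $\omega_\varphi^k$ is exactly reflection-invariant with the weight swap, since a wrong sign would place the function outside the cone $\M^k$ and break the argument. A secondary subtlety is that the cutoffs $g_\pm$ need not individually lie on the unit sphere, so the normalization constants must be tracked, but since we only claim equivalence $\sim$ (not equality), absorbing these into $c$ is harmless.
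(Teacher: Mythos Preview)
Your proposal is correct and follows essentially the same approach as the paper's proof: subtract the Maclaurin polynomial $T_{k-1}(f)$, split the remainder as $g_+ + g_-$ via multiplication by $\chi_{[0,1]}$ and $\chi_{[-1,0]}$, use \lem{auxnorms} to control the norms, and normalize each piece into the appropriate unit sphere. The sign issue you flagged as the main obstacle is resolved in the paper exactly as you anticipated, by taking $\tilde f_2(x) := (-1)^k f_2(-x)\in\M_+^k$ and using the identities $\norm{\wab f_2}{p}=\norm{\wba\tilde f_2}{p}$ and $\omega_\varphi^k(f_2,\delta)_{\wab,q}=\omega_\varphi^k(\tilde f_2,\delta)_{\wba,q}$.
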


\begin{proof}
First of all, it is clear that
\be \label{switch}
\sup_{f\in\Bpab \cap \M^k} \omega_\varphi^k(f,\delta)_{\wab,q} = \sup_{f\in\Bpba \cap \M^k} \omega_\varphi^k(f,\delta)_{\wba,q} .
\ee
This immediately follows from the observation that $f(x)\in\Bpab \cap \M^k$ iff $(-1)^k f(-x) \in \Bpba \cap\M^k$.

Now, the estimate
\begin{eqnarray*}
2 \sup_{f\in\Bpab \cap \M^k} \omega_\varphi^k(f,\delta)_{\wab,q} &=&  \sup_{f\in\Bpab \cap \M^k} \omega_\varphi^k(f,\delta)_{\wab,q} + \sup_{f\in\Bpba \cap \M^k} \omega_\varphi^k(f,\delta)_{\wba,q}\\
&\geq & \sup_{f\in\Bpab \cap \M_+^k} \omega_\varphi^k(f,\delta)_{\wab,q} + \sup_{f\in\Bpba\cap \M_+^k} \omega_\varphi^k(f,\delta)_{\wba,q}
\end{eqnarray*}
is obvious since $\M_+^k \subset \M^k$. To prove the estimate in the opposite direction, suppose that $k$, $\alpha$, $\beta$, $\delta$, $q$ and $p$ satisfy all conditions of the theorem, and let
$f$ be an arbitrary function from $\M^k \cap \Bpab$.
Denote
\[
f_1(x) := \left( f(x) - T_{k-1}(f,x) \right) \chi_{[0,1]}(x) \andd
f_2(x) := \left( f(x) - T_{k-1}(f,x) \right) \chi_{[-1,0]}(x) ,
\]
where $T_{k-1}(f)$ is the Maclaurin polynomial of degree $\leq k-1$ defined in \ineq{taylor}.
It is clear that $f_1(x)$ and $\tilde f_2 (x):= (-1)^k f_2(-x)$ are both in $\M_+^k$.
Taking into account that $f-T_{k-1}(f) = f_1+f_2$, $|f_1|+|f_2| = |f_1+f_2|$,
\[
\norm{\wab f_2}{p} = \norm{\wba \tilde f_2}{p}
\andd
\omega_\varphi^k(f_2,\delta)_{\wab,q} = \omega_\varphi^k(\tilde f_2,\delta)_{\wba,q} ,
\]
we have
\begin{eqnarray*}
\norm{\wab f_1 }{p} + \norm{\wba \tilde f_2}{p} & = & \norm{\wab f_1 }{p} + \norm{\wab   f_2}{p}
 \leq   c\norm{\wab \left(|f_1| + |f_2|\right) }{p} \\
 &=& c \norm{\wab \left( f-T_{k-1}(f) \right) }{p} \leq c \norm{\wab  f  }{p} \leq c ,
\end{eqnarray*}
where the second last inequality follows from \lem{auxnorms}.

Now, if neither $f_1$ nor $\tilde f_2$ is identically equal to $0$ on $(-1,1)$,
 using the fact that
\[
\norm{\wab f_1 }{p}^{-1} f_1\in \Bpab\cap\M_+^k \andd       \norm{\wba \tilde f_2}{p}^{-1} \tilde f_2 \in \Bpba\cap\M_+^k
\]
 we have
\begin{eqnarray*}
\omega_\varphi^k(f, \delta)_{\wab,q} & \leq & \omega_\varphi^k(f_1, \delta)_{\wab,q}   + \omega_\varphi^k(f_2, \delta)_{\wab,q}  =   \omega_\varphi^k(f_1, \delta)_{\wab,q} + \omega_\varphi^k(\tilde f_2,\delta)_{\wba,q} \\
& = & \norm{\wab f_1 }{p}\omega_\varphi^k\left(\norm{\wab f_1 }{p}^{-1}f_1, \delta\right)_{\wab,q}   + \norm{\wba \tilde f_2}{p}\omega_\varphi^k\left(\norm{\wba \tilde f_2}{p}^{-1}\tilde f_2, \delta\right)_{\wba,q}\\
&\leq& c \sup_{f\in\Bpab \cap \M_+^k} \omega_\varphi^k(f,\delta)_{\wab,q} + c \sup_{f\in\Bpba\cap \M_+^k} \omega_\varphi^k(f,\delta)_{\wba,q} .
\end{eqnarray*}
If $f_1$ or  $\tilde f_2$ is  identically zero, the estimate is obvious.
\end{proof}

\begin{lemma} \label{lemreducttwo}
Let $k\in\N$,   $1\leq q <p\leq \infty$,   $\alpha,\beta \in J_p$, $\gamma_1,\gamma_2\in\R$, and $0<\delta< 1/k$.
Then
\[
\sup_{f\in\Bpab \cap \M^k} \omega_\varphi^k(f,\delta)_{\wab,q} \sim \sup_{f\in\B_p^{\gamma_1, \beta} \cap \M_+^k} \omega_\varphi^k(f,\delta)_{w_{\gamma_1, \beta},q}
+ \sup_{f\in\B_p^{\gamma_2, \alpha} \cap \M_+^k} \omega_\varphi^k(f,\delta)_{w_{\gamma_2, \alpha},q} .
\]
\end{lemma}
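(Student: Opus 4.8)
The plan is to deduce \lem{lemreducttwo} from \lem{lemreduct} together with a single structural observation: for functions in $\M_+^k$, which vanish on $(-1,0]$, the Jacobi exponent at the endpoint $-1$ (where the function is identically zero) affects neither the weighted $p$-norm nor the weighted modulus, up to constants. The reduction to $\M_+^k$ is essential here, since a generic $f\in\M^k$ is nonzero near both endpoints and the exponent at $-1$ is then genuinely active; it is only after truncation that the left exponent becomes harmless. So I would first apply \lem{lemreduct} to write the left-hand side as $\sim\sup_{\Bpab\cap\M_+^k}\omega_\varphi^k(f,\delta)_{\wab,q}+\sup_{\Bpba\cap\M_+^k}\omega_\varphi^k(f,\delta)_{\wba,q}$, and then it suffices to establish the comparison
\be
\omega_\varphi^k(f,\delta)_{w_{a,b},q}\sim\omega_\varphi^k(f,\delta)_{w_{\gamma,b},q}
\andd
\norm{w_{a,b}f}{p}\sim\norm{w_{\gamma,b}f}{p},\qquad f\in\M_+^k ,
\ee
and apply it once with $(a,b)=(\alpha,\beta)$, $\gamma=\gamma_1$ and once with $(a,b)=(\beta,\alpha)$, $\gamma=\gamma_2$. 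The norm relation shows that the normalized classes $\B_p^{a,b}\cap\M_+^k$ and $\B_p^{\gamma,b}\cap\M_+^k$ agree up to a bounded positive rescaling, so, using that $\M_+^k$ is a cone and that the modulus is positively homogeneous in $f$, the corresponding suprema are comparable.

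For the norm and for two of the three constituents of the modulus the comparison is routine and rests only on the location of the supports. Since every $f\in\M_+^k$ is supported in $[0,1)$, where $1+x\in[1,2]$ and hence $(1+x)^a\sim(1+x)^\gamma\sim1$, the relation $\norm{w_{a,b}f}{p}\sim\norm{w_{\gamma,b}f}{p}$ is immediate. For the main part, the hypothesis $0<\delta<1/k$ together with $\varphi\le1$ forces the half-width $kh\varphi(x)/2$ of the symmetric stencil of $\Delta_{h\varphi}^k(f,x)$ to be $<1/2$ for every $0<h\le\delta$; consequently $\Delta_{h\varphi}^k(f,x)=0$ unless the rightmost node $x+kh\varphi(x)/2\ge0$, i.e. unless $x>-1/2$, and on $\{x>-1/2\}$ one has $(1+x)^a\sim(1+x)^\gamma$. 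This gives $\Omega_\varphi^k(f,\delta)_{w_{a,b},q}\sim\Omega_\varphi^k(f,\delta)_{w_{\gamma,b},q}$. The backward part lives on $[1-2k^2\delta^2,1]$, where the exponent $b$ at $+1$ is untouched and $(1+x)\sim2$, so $\overleftarrow\Omega_\varphi^k(f,\delta)_{w_{a,b},q}\sim\overleftarrow\Omega_\varphi^k(f,\delta)_{w_{\gamma,b},q}$ as well.

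The only delicate term, and the crux of the lemma, is the forward part $\overrightarrow\Omega_\varphi^k(f,\delta)_{\cdot,q}$ on $[-1,-1+2k^2\delta^2]$, precisely the region near $-1$ where the two weights are genuinely incomparable. Here I would again use that $f\in\M_+^k$ vanishes on $(-1,0]$: the forward difference $\overrightarrow\Delta_h^k(f,x)$, with stencil $\{x,x+h,\ldots,x+kh\}$, vanishes unless the stencil reaches the support, i.e. unless $x+kh\ge0$. For $x\le-1+2k^2\delta^2$ and $h\le2k^2\delta^2$ we have $x+kh\le-1+2k^2\delta^2(k+1)$, which is $<0$ as soon as $\delta<\delta_0:=\big(k\sqrt{2(k+1)}\big)^{-1}$; hence for such $\delta$ the forward part is identically $0$ for \emph{both} weights and the comparison is trivial. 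This already covers the range relevant to the applications, since by the remark following \thm{maintheorem} only small $\delta$ is needed. For the remaining bounded range $\delta_0\le\delta<1/k$, where $\delta\sim1$, I would control the forward part using the sign and monotonicity structure of the differences of $k$-monotone functions, namely $\overrightarrow\Delta_h^k(f,x)\ge0$ and $\overrightarrow\Delta_h^k(f,x)=\overrightarrow\Delta_h^{k-1}(f,x+h)-\overrightarrow\Delta_h^{k-1}(f,x)$ with $f\in\M_+^{k-1}$ by \cor{cormplus}.

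The main obstacle is exactly this last estimate on $[\delta_0,1/k)$. Pointwise the weights $(1+x)^{a}$ and $(1+x)^{\gamma}$ differ by an unbounded factor as $x\to-1$, so the forward part cannot be compared weight-against-weight and the control must instead be drawn from the $k$-monotonicity of $f$: as the stencil retreats toward $-1$, progressively fewer of its nodes remain in the support $[0,1)$, forcing $\overrightarrow\Delta_h^k(f,x)$ to decay fast enough that the forward part is dominated, up to a constant depending only on the bounded quantities $k,a,\gamma,b$, by the main part carrying the same weight. Once this domination is in hand, it can be chained with the weight-comparability of the main part from the second paragraph to obtain $\overrightarrow\Omega_\varphi^k(f,\delta)_{w_{\gamma,b},q}\le c\,\Omega_\varphi^k(f,\delta)_{w_{\gamma,b},q}\sim\Omega_\varphi^k(f,\delta)_{w_{a,b},q}\le c\,\omega_\varphi^k(f,\delta)_{w_{a,b},q}$ and its symmetric counterpart; combining the three parts yields the displayed comparison, and the two applications of it complete the proof.
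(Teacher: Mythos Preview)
Your reduction via \lem{lemreduct} followed by the observation that, for $f\in\M_+^k$, the exponent at $-1$ is inactive is exactly the paper's approach; the paper's proof is a single sentence recording only that $\wab\sim w_{\gamma_1,\beta}$ on $[-1/2,1]$, that $\Delta_{h\varphi}^k(f,\cdot)$ is supported in $[-1/2,1]$ when $0<h\le 1/k$, and that $f$ is supported in $[0,1]$. Your treatment of the norm, the main part $\Omega_\varphi^k$, the backward part, and the forward part for $\delta<\delta_0$ is correct and more explicit than the paper's.

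The gap is your argument for $\overrightarrow\Omega_\varphi^k$ on the range $\delta_0\le\delta<1/k$. The decay you invoke does not take place: for fixed $h$ with $kh>1$, the rightmost node satisfies $x+kh\ge kh-1>0$ for every $x\in[-1,-1+2k^2\delta^2]$, so $\overrightarrow\Delta_h^k(f,x)$ need not tend to $0$ as $x\to-1^+$. Concretely, with $k=1$, $\delta=0.9$, $h=1.5$ and $f=\chi_{[1/2,1)}\in\M_+^1$, one has $\overrightarrow\Delta_h^1(f,x)=1$ on $[-1,-1/2]$, whence $\overrightarrow\Omega_\varphi^1(f,\delta)_{w_{\gamma,\beta},q}=\infty$ whenever $\gamma\le -1/q$; yet the left-hand supremum in the lemma is finite (since $\alpha\in J_p\subset J_q$). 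Thus no domination by the main part, nor any monotonicity argument, can rescue the equivalence in this range for arbitrary $\gamma_1,\gamma_2\in\R$. The paper's proof is silent on $\overrightarrow\Omega_\varphi^k$ and so shares this lacuna; the statement as written appears to require either $\delta$ small (say $\delta<\delta_0$, which suffices for every application in the paper by the remark following \thm{maintheorem}) or $\gamma_1,\gamma_2\in J_q$.
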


\begin{proof}
The lemma immediately follows from \lem{lemreduct} and the observation that
\[
\wab(x) \sim w_{\gamma_1, \beta}(x)   \andd    \wba(x) \sim w_{\gamma_2, \alpha}(x)       , \quad -1/2\leq x\leq 1 ,
\]
\[
\norm{w \Delta_{h\varphi}^k(f)}{\Lq(S)} = \norm{w \Delta_{h\varphi}^k(f)}{\Lq(S\cap[-1/2,1])}, \quad 0<h\leq 1/k ,
\]
and
\[
\norm{w f}{\Lp(S)} =\norm{w f}{\Lp(S\cap[0,1])} ,
\]
for any $f$ which is identically $0$ on $[-1,0]$.
\end{proof}

 \sect{Auxiliary results and upper estimates for $q=1$}

The proof of the following proposition is elementary and will be omitted.

\begin{proposition} \label{prchange}
Let $0<\eta<1$. Then the following   holds.
\begin{enumerate}[(a)]
\item  \label{a}
If  $|\lambda| \leq \sqrt{2\eta} $, then
the function $x \mapsto x + \lambda\varphi(x)$ is increasing on
$[-1+\eta, 1-\eta]$ and has the inverse
$y \mapsto \psi (\lambda, y)$, where
\be \label{psi}
\psi (\lambda, y) :=   {y-\lambda\sqrt{1-y^2 +\lambda^2} \over  1+\lambda^2  }   .
\ee
\item  \label{b}
If    $|\lambda| \leq \sqrt{2\eta} $, then
\be \label{change}
\int_{-1+\eta}^{1-\eta} g (x) f\left( x + \lambda\varphi(x) \right)\, dx =
\int_{-1+\eta+\lambda\sqrt{2\eta-\eta^2}}^{1-\eta+\lambda\sqrt{2\eta-\eta^2}}
f(y)\, g \left( \psi(\lambda,y) \right) {\partial \psi (\lambda,y) \over \partial y}  \, d y  .
\ee
\item  \label{c}
If $|x| \leq 1/\sqrt{4\lambda^2+1}$, then  $\ds \frac 12 \leq  {\partial (x + \lambda\varphi(x)) \over \partial x}   \leq 2$.
In particular,
if $|\lambda| \leq \sqrt{\eta/2} $, then $\ds \frac 12 \leq  {\partial (x + \lambda\varphi(x)) \over \partial x}   \leq 2$  for $x\in [-1+\eta, 1-\eta]$, and hence
$\ds \frac 12 \leq  {\partial \psi(\lambda, y) \over \partial y}   \leq 2$  for $y \in [-1+\eta+\lambda\sqrt{2\eta-\eta^2},  1-\eta+\lambda\sqrt{2\eta-\eta^2}]$.
\item  \label{d}
If  $|x|\leq 1-\eta$, then $\varphi(x) \leq \sqrt{2/\eta}(1-|x|)$.
\item  \label{e}
If    $|\lambda| \leq \sqrt{\eta}/2 $ and $|x| \leq 1-\eta$, then $(1-x)/4 \leq 1-x+\lambda\varphi(x) \leq 2(1-x)$ and
$(1+x)/4 \leq 1+x+\lambda\varphi(x) \leq 2(1+x)$.
\end{enumerate}
\end{proposition}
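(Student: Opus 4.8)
The plan is to reduce everything to the single derivative identity $\partial_x\bigl(x+\lambda\varphi(x)\bigr)=1-\lambda x/\sqrt{1-x^2}$, which follows from $\varphi'(x)=-x/\varphi(x)$. For the monotonicity in part~(a) I would note that on $[-1+\eta,1-\eta]$ one has $|x|\le 1-\eta$ and hence $1-x^2\ge 2\eta-\eta^2$; together with $|\lambda|\le\sqrt{2\eta}$ this gives $|\lambda x|/\sqrt{1-x^2}\le \sqrt{2}\,(1-\eta)/\sqrt{2-\eta}$, and this last quantity is $<1$ because the inequality $2(1-\eta)^2<2-\eta$ rearranges to $\eta(2\eta-3)<0$, valid for $0<\eta<1$. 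Thus the derivative stays positive and $x\mapsto x+\lambda\varphi(x)$ is strictly increasing, hence invertible.

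For the explicit inverse I would set $y=x+\lambda\sqrt{1-x^2}$, isolate the radical and square to obtain $(1+\lambda^2)x^2-2yx+(y^2-\lambda^2)=0$, whose discriminant simplifies to $4\lambda^2(1-y^2+\lambda^2)$, giving the two candidates $\bigl(y\pm|\lambda|\sqrt{1-y^2+\lambda^2}\bigr)/(1+\lambda^2)$. The genuinely delicate point—and the one place where squaring could betray us—is that the correct branch is the one written in $\ineq{psi}$. I would verify this cleanly rather than by a sign discussion: substituting $y=x+\lambda\sqrt{1-x^2}$ yields the identity $1-y^2+\lambda^2=(\sqrt{1-x^2}-\lambda x)^2$, so that $\sqrt{1-y^2+\lambda^2}=|\sqrt{1-x^2}-\lambda x|$, and feeding this into $\ineq{psi}$ reduces $\psi(\lambda,x+\lambda\varphi(x))=x$ to the requirement $\sqrt{1-x^2}-\lambda x\ge 0$. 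The latter holds on $[-1+\eta,1-\eta]$ precisely because $|x|\le 1-\eta$ and $|\lambda|\le\sqrt{2\eta}$ force $x^2(1+\lambda^2)\le 1$, which is once more the inequality $\eta(2\eta-3)\le 0$. This confirms $\ineq{psi}$.

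Part~(b) is then just the substitution $y=x+\lambda\varphi(x)$, $x=\psi(\lambda,y)$, $dx=(\partial\psi/\partial y)\,dy$ applied to $\ineq{change}$, legitimate since part~(a) makes the map a $C^1$ diffeomorphism; the stated limits come from $\varphi(\pm(1-\eta))=\sqrt{2\eta-\eta^2}$, so the endpoints $x=\mp(1-\eta)$ map to $\mp(1-\eta)+\lambda\sqrt{2\eta-\eta^2}$. For part~(c), from $|x|\le(4\lambda^2+1)^{-1/2}$ I would get $x^2/(1-x^2)\le 1/(4\lambda^2)$, hence $\lambda^2x^2/(1-x^2)\le 1/4$, so $|\lambda x|/\sqrt{1-x^2}\le 1/2$ and the derivative lies in $[1/2,3/2]\subset[1/2,2]$. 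The ``in particular'' clause follows by checking $[-1+\eta,1-\eta]\subset[-(4\lambda^2+1)^{-1/2},(4\lambda^2+1)^{-1/2}]$ when $|\lambda|\le\sqrt{\eta/2}$ (yet again the inequality $\eta(2\eta-3)\le 0$), and the bound on $\partial\psi/\partial y$ is immediate from $\partial\psi/\partial y=1/\bigl(1-\lambda x/\sqrt{1-x^2}\bigr)$ evaluated at $x=\psi(\lambda,y)$, whose reciprocal again lies in $[1/2,2]$.

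For part~(d) I would factor $\varphi(x)=\sqrt{(1-|x|)(1+|x|)}\le\sqrt{2(1-|x|)}$ using $1+|x|\le 2$, and then use $1-|x|\ge\eta$ to write $\varphi(x)\le\sqrt{2}\,(1-|x|)/\sqrt{1-|x|}\le\sqrt{2/\eta}\,(1-|x|)$. For part~(e), combining this with the elementary bounds $1-|x|\le 1-x$ and $1-|x|\le 1+x$ gives $\varphi(x)\le\sqrt{2/\eta}\,(1-x)$ and $\varphi(x)\le\sqrt{2/\eta}\,(1+x)$; then $|\lambda|\le\sqrt{\eta}/2$ yields $|\lambda\varphi(x)|\le\tfrac1{\sqrt2}(1-x)$ and $|\lambda\varphi(x)|\le\tfrac1{\sqrt2}(1+x)$, so $1-x+\lambda\varphi(x)$ lies between $(1-\tfrac1{\sqrt2})(1-x)$ and $(1+\tfrac1{\sqrt2})(1-x)$, comfortably inside $[(1-x)/4,\,2(1-x)]$ since $1-\tfrac1{\sqrt2}>\tfrac14$ and $1+\tfrac1{\sqrt2}<2$; the estimate for $1+x+\lambda\varphi(x)$ is identical. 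I expect the branch verification in part~(a)—through the identity $1-y^2+\lambda^2=(\sqrt{1-x^2}-\lambda x)^2$—to be the only step demanding genuine care, while the remaining items are routine inequality chasing that all funnel through the same elementary estimate $\eta(2\eta-3)\le 0$.
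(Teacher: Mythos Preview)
Your proof is correct and complete. The paper itself omits the proof entirely, stating only that it ``is elementary and will be omitted,'' so there is no approach to compare against; your direct verification of the branch in part~(a) via the identity $1-y^2+\lambda^2=(\sqrt{1-x^2}-\lambda x)^2$ is a clean way to handle the one genuinely nonroutine step.
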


We are now ready to prove the main auxiliary theorem which will yield upper estimates in \thm{maintheorem} for $q=1$.   In view of \lem{lemreducttwo} we consider $f\in\M_+^k\cap\W_1^{\beta,\beta}$ noting that while we could consider $f\in\M_+^k\cap\W_1^{0,\beta}$, the symmetry makes things more convenient. We also note that it is possible to use the same approach in order to prove this theorem for $f\in\M^k\cap\W_1^{\alpha,\beta}$, but the estimates become more cumbersome. Finally, recall that
$
w_{\beta,\beta}(x) = \varphi^{2\beta}(x) .
$

\begin{theorem} \label{estmain}
Let $k\in\N$,   $\beta \in\R$, $f\in \M_+^k \cap \W_1^{\beta,\beta}$, and $0<\delta\leq 1/(2k)$.
Then
\begin{eqnarray} \label{estest}
\omega_\varphi^k(f,\delta)_{\wbb,1}  & \leq& c     \norm{\wbb f}{\LL_1[1-3k^2 \delta^2,1]}   \\ \nonumber
  &&    +  c  \sup_{0<h\leq \delta} h^k \norm{ (1-y^2)^{-k/2} \wbb(y) f(y)}{\LL_1[0,1- 2k^2 h^2]}
 \,.
\end{eqnarray}
\end{theorem}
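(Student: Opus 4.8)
The plan is to estimate separately the three constituents of $\omega_\varphi^k(f,\delta)_{\wbb,1}$ in \ineq{mod}, using two facts throughout: since $f\in\M_+^k$, each $k$th difference below is nonnegative, so every $\LL_1$-norm is a plain integral (no absolute values), and, by \cor{cormplus}, $f\in\M_+^1$, i.e. $f$ is nonnegative and nondecreasing on $[0,1)$ and vanishes on $(-1,0]$. For the backward part, every knot of $\overleftarrow\Delta_h^k(f,x)$ lies to the left of $x$, so monotonicity and nonnegativity give $0\le\overleftarrow\Delta_h^k(f,x)=\sum_{i=0}^k\binom ki(-1)^{k-i}f(x-kh+ih)\le\big(\sum_{k-i\,\mathrm{even}}\binom ki\big)f(x)=2^{k-1}f(x)$; integrating over $[1-2k^2\delta^2,1]\subseteq[1-3k^2\delta^2,1]$ bounds $\overleftarrow\Omega_\varphi^k(f,\delta)_{\wbb,1}$ by $c\norm{\wbb f}{\LL_1[1-3k^2\delta^2,1]}$, the first term of \ineq{estest}. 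For the forward part with $k=1$, all knots of $\overrightarrow\Delta_h^1(f,x)$ with $x\in[-1,-1+2\delta^2]$ and $0<h\le2\delta^2$ lie in $(-1,0]$ (as $\delta\le 1/2$), where $f\equiv0$, so $\overrightarrow\Omega=0$; for $k\ge2$ I would use the representation $f(y)=\frac1{(k-1)!}\int_0^1(y-t)_+^{k-1}\,d\nu(t)$ with $\nu\ge0$ supported in $[0,1)$ (available since $f\in\M_+^k$), whence $\overrightarrow\Delta_h^k(f,x)=\frac1{(k-1)!}\int_0^1\overrightarrow\Delta_h^k((\cdot-t)_+^{k-1},x)\,d\nu(t)$ with a nonnegative $B$-spline kernel that is $\le ch^{k-1}$ and supported in $t\in(x,x+kh)$; a Fubini estimate then shows this part is dominated by the second term of \ineq{estest}.

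The main part is the heart of the matter. Fix $0<h\le\delta$, put $\eta:=2k^2h^2$ and $\lambda_i:=(i-k/2)h$, so that $|\lambda_i|\le kh/2$ and \prop{prchange}{b} applies to each knot map $x\mapsto x+\lambda_i\varphi(x)$. Expanding $\Delta_{h\varphi}^k(f,x)$ and changing variables in each summand via \prop{prchange}{b} yields
\[
\int_{-1+\eta}^{1-\eta}\wbb(x)\,\Delta_{h\varphi}^k(f,x)\,dx=\sum_{i=0}^k\binom ki(-1)^{k-i}\int_{A_i}^{B_i}f(y)\,g(\lambda_i,y)\,dy,\qquad g(\lambda,y):=\wbb(\psi(\lambda,y))\,\frac{\partial\psi(\lambda,y)}{\partial y},
\]
where $A_i=-1+\eta+\lambda_i\sqrt{2\eta-\eta^2}$ and $B_i=1-\eta+\lambda_i\sqrt{2\eta-\eta^2}$ are increasing in $\lambda_i$. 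Splitting $\int_{A_i}^{B_i}=\int_{A_k}^{B_0}+\int_{A_i}^{A_k}+\int_{B_0}^{B_i}$ turns the sum into a main term $\int_{A_k}^{B_0}f(y)\,D_h(y)\,dy$, where $D_h(y):=\sum_{i=0}^k\binom ki(-1)^{k-i}g(\lambda_i,y)$ is the $k$th finite difference of $g(\cdot,y)$ in $\lambda$ (step $h$, nodes in $[-kh/2,kh/2]$), plus boundary corrections. The corrections $\int_{A_i}^{A_k}$ live in $[A_i,A_k]\subseteq(-1,-1+3k^2\delta^2)\subseteq(-1,0)$ (since $\delta\le 1/(2k)$), where $f\equiv0$, hence vanish; the corrections $\int_{B_0}^{B_i}$ live in $[B_0,B_i]\subseteq[1-3k^2\delta^2,1]$, where $\partial_y\psi\in[\tfrac12,2]$ and $\wbb(\psi(\lambda_i,y))\sim\wbb(y)$ by \prop{prchange}{c} and \prop{prchange}{e}, so they contribute at most $c\norm{\wbb f}{\LL_1[1-3k^2\delta^2,1]}$.

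For the main term, the mean-value theorem for divided differences gives $|D_h(y)|\le h^k\sup_{|\lambda|\le kh/2}|\partial_\lambda^k g(\lambda,y)|$; discarding $y\le0$ (where $f=0$) and using $B_0\le 1-2k^2h^2$, the main term is at most $h^k\int_0^{1-2k^2h^2}f(y)\sup_{|\lambda|\le kh/2}|\partial_\lambda^k g(\lambda,y)|\,dy$. Everything therefore reduces to the pointwise bound
\[
\sup_{|\lambda|\le kh/2}\Big|\frac{\partial^k}{\partial\lambda^k}\big[\wbb(\psi(\lambda,y))\,\partial_y\psi(\lambda,y)\big]\Big|\le c\,(1-y^2)^{-k/2}\,\wbb(y),\qquad 0<y\le 1-2k^2h^2,
\]
which I expect to be the main obstacle. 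It has to be read off from the explicit formula \ineq{psi} by Leibniz's rule and the Fa\`a di Bruno formula, carefully tracking powers of $\varphi(y)=(1-y^2)^{1/2}$: each differentiation in $\lambda$ costs one factor $\varphi^{-1}$, because $\psi$ varies in $\lambda$ on the scale $\varphi$ while $\wbb=\varphi^{2\beta}$ varies on the relative scale $\varphi^{-1}$, and one must control $(1-\psi^2)^{\beta-j}\sim\varphi^{2(\beta-j)}$ uniformly in the admissible range of $\lambda$ for every real $\beta$ and $0\le j\le k$. Granting this estimate, the main term is $\le ch^k\norm{(1-y^2)^{-k/2}\wbb f}{\LL_1[0,1-2k^2h^2]}$, and taking $\sup_{0<h\le\delta}$ and combining with the forward and backward bounds gives \ineq{estest}.
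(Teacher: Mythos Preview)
Your overall architecture is exactly the paper's: for $\Omega_\varphi^k$ you perform the same change of variables via \prop{prchange}{b}, the same split into a common interval and boundary pieces near $1$ (the paper calls them $\I_c$ and $\I_r$), and you reduce the main term to a $k$th difference in $\lambda$ of $g(\lambda,y):=\wbb(\psi(\lambda,y))\,\partial_y\psi(\lambda,y)$, to be controlled by the mean-value estimate for differences. Your treatment of $\overleftarrow\Omega$ is the content of \lem{lemineq1} with $q=1$, which is precisely what the paper invokes.

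The one place where you leave a genuine gap is the pointwise bound $|\partial_\lambda^k g(\lambda,y)|\le c\,\varphi(y)^{2\beta-k}$, which you correctly flag as the main obstacle but only support heuristically. A direct Leibniz/Fa\`a di Bruno attack on $\varphi^{2\beta}(\psi(\lambda,y))$ is workable but unpleasant for real $\beta$. The paper's device is cleaner and makes your heuristic ``each $\partial_\lambda$ costs a factor $\varphi^{-1}$'' rigorous in one stroke: rescale $\lambda=t\vartheta$ with $\vartheta:=\varphi(y)$, so the nodes sit in $t\in[-1/2,1/2]$ and $|D_h(y)|\le (h/\vartheta)^k\|\partial_t^k g_y\|_{\C[-1/2,1/2]}$. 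One then computes the closed form
\[
g_y(t)=\vartheta^{2\beta}\,(1+t^2)^{-\beta-1}\Bigl(1-\frac{ty}{\sqrt{1+t^2}}\Bigr)^{-2\beta-1},
\]
which is $C^\infty$ on $|t|\le 1$, $|y|\le 1$ (since $\sqrt{1+t^2}>|t|\ge|ty|$), so $\|\partial_t^k g_y\|_{\C[-1,1]}\le c\,\vartheta^{2\beta}$ uniformly in $y$; the paper makes this explicit by expanding the last factor as a binomial series in $ty/\sqrt{1+t^2}$ and differentiating term by term. That yields exactly your target $|\partial_\lambda^k g|\le c\,\vartheta^{2\beta-k}=c\,(1-y^2)^{-k/2}\wbb(y)$. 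With this filled in, your proof and the paper's coincide. (Your B-spline/Fubini sketch for $\overrightarrow\Omega$ when $k\ge2$ is also left vague; note that for $\delta\le 1/(2k)$ small enough all forward knots stay in $(-1,0]$ so $\overrightarrow\Omega=0$, and for the remaining bounded range of $\delta$ a crude bound suffices. The paper is equally terse on this point.)
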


The following corollary   immediately follows by H\"{o}lder's inequality and the  fact  that,
for $1\leq p'\leq \infty$ (with $1/p'+1/p=1$),
\[
 \norm{(1-y^2)^{-k/2}}{\LL_{p'}[0, 1-2k^2h^2]} \leq
 c  \left\{
 \begin{array}{ll}
 h^{-k+2/p'}\,, & \;\; \mbox{\rm if $kp'>2$}\,,\\
| \ln h |^{1/p'}\,, & \;\; \mbox{\rm if $kp'= 2$}\,,\\
1 \,, & \;\; \mbox{\rm if $kp' <2$}\,.
\end{array}
\right.
 \]

\begin{corollary} \label{maincorcor}
Let $k\in\N$, $ \beta \in \R$, $1\leq p \leq \infty$, $f\in \M_+^k\cap \Wpbb$, and $0<\delta\leq 1/(2k)$. Then
\be  \label{ininmaincor}
\omega_\varphi^k(f,\delta)_{\wbb,1}
  \leq   c  \norm{\wbb f}{p}
\left\{
\begin{array}{ll}
\delta^{2-2/p}\,, & \quad \mbox{\rm if $k\geq 3$, or $k=2$ and $1\leq p<\infty$,} \\
 & \quad \mbox{\rm or $k=1$ and $1\leq p < 2$, }\\
\delta^{2}| \ln \delta |\,, & \quad \mbox{\rm if $k = 2$ and $ p=\infty $, }\\
\delta \sqrt{| \ln \delta |} \,, & \quad \mbox{\rm if $k=1$ and $p=2$,}\\
 \delta \,, & \quad \mbox{\rm if $k=1$ and $2 < p\leq \infty$.}
\end{array} \right.
\ee
\end{corollary}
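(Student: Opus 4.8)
The plan is to bound each of the two terms on the right-hand side of \thm{estmain} by $\norm{\wbb f}{p}$ times an explicit power of $\delta$ (with a possible logarithmic factor) and then to match the resulting rates to the six cases of the corollary. Throughout, $p'$ denotes the conjugate exponent, $1/p+1/p'=1$, so that $2/p'=2-2/p$; and I use $0<\delta\le 1/(2k)$, which is exactly the range in which \thm{estmain} applies.

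For the main term I fix $h\in(0,\delta]$ and apply H\"older's inequality on $[0,1-2k^2h^2]$ with exponents $p$ and $p'$:
\[
\norm{(1-y^2)^{-k/2}\wbb(y) f(y)}{\LL_1[0,1-2k^2h^2]}
\leq \norm{(1-y^2)^{-k/2}}{\LL_{p'}[0,1-2k^2h^2]}\,\norm{\wbb f}{p}.
\]
Inserting the three-case estimate for $\norm{(1-y^2)^{-k/2}}{\LL_{p'}[0,1-2k^2h^2]}$ recorded just above the corollary and multiplying by $h^k$ gives, up to the constant $c$, the quantity $h^{2-2/p}$ when $kp'>2$, the quantity $h^k|\ln h|^{1/p'}$ when $kp'=2$, and the quantity $h^k$ when $kp'<2$. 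Each of these is nondecreasing in $h$ on $0<h\le\delta\le 1/(2k)$ (for the logarithmic factor this uses $\frac{d}{dh}\bigl(h^k|\ln h|^{1/p'}\bigr)>0$ once $|\ln h|>1/(kp')$, which holds throughout this range), so the supremum over $h\in(0,\delta]$ is attained at $h=\delta$ and equals $c\,\delta^{2-2/p}$, $c\,\delta^k|\ln\delta|^{1/p'}$, or $c\,\delta^k$, respectively.

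For the boundary term, H\"older's inequality on the interval $[1-3k^2\delta^2,1]$, whose length is $3k^2\delta^2$, gives
\[
\norm{\wbb f}{\LL_1[1-3k^2\delta^2,1]}\leq (3k^2\delta^2)^{1/p'}\norm{\wbb f}{p}=c\,\delta^{2-2/p}\norm{\wbb f}{p}.
\]
It then remains to do the bookkeeping: the condition $kp'<2$ forces $k=1$ and $p>2$; the condition $kp'=2$ holds exactly for $(k,p)=(1,2)$ and $(k,p)=(2,\infty)$; and $kp'>2$ covers all remaining pairs, so the three cases above reproduce precisely the six cases in \ineq{ininmaincor}. Finally one checks that the boundary contribution never dominates the main-term rate: in the case $kp'>2$ the two terms are of the same order $\delta^{2-2/p}$, while in the cases $kp'\le 2$ one has $2-2/p=2/p'\ge k$ (with equality iff $kp'=2$), so $\delta^{2-2/p}\le\delta^k$ and the boundary term is absorbed for all small $\delta$. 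I do not expect a genuine obstacle here, since the corollary is essentially a H\"older-plus-supremum reduction of \thm{estmain}; the only points requiring care are the elementary monotonicity-in-$h$ check for the factor $h^k|\ln h|^{1/p'}$ and the verification, case by case, that the boundary term is dominated.
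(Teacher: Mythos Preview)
Your proof is correct and follows exactly the approach indicated in the paper, which simply states that the corollary ``immediately follows by H\"older's inequality'' together with the three-case estimate for $\norm{(1-y^2)^{-k/2}}{\LL_{p'}[0,1-2k^2h^2]}$. You have supplied the details the paper omits---the monotonicity-in-$h$ check, the case-by-case bookkeeping matching $kp'\lessgtr 2$ to the six cases of \ineq{ininmaincor}, and the verification that the boundary term $c\delta^{2-2/p}$ is always dominated by the main term---and all of these are handled correctly.
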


\begin{remark} \label{remnum}
If $\beta = 0$ and $k$ is even, or if $\beta=-1/2$ and $k$ is odd, then estimates   \ineq{estest} and \ineq{ininmaincor}  can be improved (see \rem{aarem} and \cite[Theorem 3.2]{k2009}). In fact, if $\beta=-1/2$ and $k=1$, then
we have
$
\omega_\varphi^1(f,\delta)_{w_{-1/2,-1/2},1} \leq c \delta^{2-2/p} \norm{\wbb f}{p} ,
$
for all $1\leq p \leq \infty$ and $f\in \M_+^1\cap \W_p^{-1/2,-1/2}$, and not only for $1\leq p <2$ as  \ineq{ininmaincor} implies. However, this is not too exciting since, on one hand,  $\beta = -1/2$ is in $J_p$ only if $1\leq p <2$ and, on the other hand, if
$p\geq 2$ then the set $\M_+^1\cap \W_p^{-1/2,-1/2}$ consists only of    functions which are identically equal to $0$ on $(-1,1)$.
\end{remark}

\begin{remark} \label{remreduct}
\cor{maincorcor}, together with Lemmas~\ref{lemreduct} and \ref{lemreducttwo}, implies the upper estimates in \thm{maintheorem} in the case $q=1$ (except for  the case $\alpha=\beta=0$ when k$=2$ and $p=\infty$ which follows from \cite{k2009}).
\end{remark}

Now, if $f\in \M_+^k\cap \Wpbb$ is such that $f \equiv 0$ on $[0, 1-A\delta^2]$, for some  constant $0<A\leq \delta^{-2}$, then taking into account that
\[
\sup_{0<h\leq \delta} h^k \norm{(1-y^2)^{-k/2}}{\LL_{p'}[1-A\delta^2, 1-2k^2h^2]} \leq c(A, k, p) \delta^{2/p'} ,
 \]
we have another corollary of \thm{estmain}.

\begin{corollary} \label{seccor}
Let $k\in\N$, $ \beta\in\R$, $1\leq p \leq \infty$, $0<\delta \leq 1/(2k)$, and let $f\in \M_+^k\cap \Wpbb$ be such that $f(x)=0$ for $x\in [0,1-A\delta^2]$, for some positive constant $A\leq \delta^{-2}$. Then
\begin{eqnarray*}
\omega_\varphi^k(f,\delta)_{\wbb,1}
  &\leq&  c  \delta^{2-2/p} \norm{\wbb f}{p} ,
\end{eqnarray*}
where $c$ depends on $A$.
\end{corollary}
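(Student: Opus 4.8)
The plan is to invoke \thm{estmain} and estimate each of the two terms on its right-hand side separately by $c\,\delta^{2-2/p}\norm{\wbb f}{p}$; the whole argument then reduces to Hölder's inequality (with $1/p+1/p'=1$) together with the elementary bound on $\norm{(1-y^2)^{-k/2}}{\LL_{p'}}$ displayed immediately before the corollary. First I would handle the boundary term. Applying Hölder's inequality with exponents $p$ and $p'$ on the interval $[1-3k^2\delta^2,1]$, whose length is $3k^2\delta^2$, gives
\[
\norm{\wbb f}{\LL_1[1-3k^2\delta^2,1]}\leq (3k^2\delta^2)^{1/p'}\norm{\wbb f}{\LL_p[1-3k^2\delta^2,1]}\leq c\,\delta^{2/p'}\norm{\wbb f}{p},
\]
and $\delta^{2/p'}=\delta^{2-2/p}$, which is the desired order.

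Next I would treat the main-part term. The hypothesis $f\equiv 0$ on $[0,1-A\delta^2]$ lets me shrink the domain of integration from $[0,1-2k^2h^2]$ to $[1-A\delta^2,1-2k^2h^2]$, the contribution being zero once $2k^2h^2\geq A\delta^2$ (the interval becomes empty). On this interval I separate $(1-y^2)^{-k/2}$ from $\wbb f$ by Hölder,
\[
\norm{(1-y^2)^{-k/2}\wbb f}{\LL_1[1-A\delta^2,1-2k^2h^2]}\leq \norm{(1-y^2)^{-k/2}}{\LL_{p'}[1-A\delta^2,1-2k^2h^2]}\norm{\wbb f}{p},
\]
multiply by $h^k$, and take the supremum over $0<h\leq\delta$, invoking the displayed estimate
\[
\sup_{0<h\leq\delta}h^k\norm{(1-y^2)^{-k/2}}{\LL_{p'}[1-A\delta^2,1-2k^2h^2]}\leq c(A,k,p)\,\delta^{2/p'}
\]
to finish. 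Collecting the two terms yields $\omega_\varphi^k(f,\delta)_{\wbb,1}\leq c\,\delta^{2-2/p}\norm{\wbb f}{p}$ with $c$ depending on $A$.

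The single computation carrying the argument is precisely that displayed supremum. Writing $1-y^2\sim 1-y$ near $y=1$ and substituting $u=1-y$, it amounts to controlling $h^k\bigl(\int_{2k^2h^2}^{A\delta^2}u^{-kp'/2}\,du\bigr)^{1/p'}$, and I expect the borderline case $kp'=2$ to be the only delicate point. There the inner integral equals $\ln\bigl(A\delta^2/(2k^2h^2)\bigr)$, and after setting $t=h/\delta$ one must check that $t^{2/p'}\bigl(c_1+c_2\ln(1/t)\bigr)^{1/p'}$ stays bounded as $t\to0^+$, which it does because the power of $t$ dominates the logarithm. In the two remaining regimes the factor $h^k$ compensates exactly: when $kp'>2$ the integral is dominated by its lower endpoint $2k^2h^2$, producing $h^{2/p'-k}$ so that $h^k\cdot h^{2/p'-k}=h^{2/p'}$ is maximized at $h=\delta$; when $kp'<2$ the integral is dominated by its upper endpoint $A\delta^2$, producing a factor independent of $h$, and the supremum of $h^k$ is again attained at $h=\delta$. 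All three cases yield $\delta^{2/p'}=\delta^{2-2/p}$, completing the proof.
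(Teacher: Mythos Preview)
Your proposal is correct and follows essentially the same approach as the paper: invoke \thm{estmain}, bound the boundary term by H\"older on an interval of length $\sim\delta^2$, and for the main-part term use the vanishing of $f$ on $[0,1-A\delta^2]$ together with H\"older and the displayed supremum estimate $\sup_{0<h\leq\delta}h^k\norm{(1-y^2)^{-k/2}}{\LL_{p'}[1-A\delta^2,1-2k^2h^2]}\leq c(A,k,p)\delta^{2/p'}$. Your case analysis verifying this last estimate (including the borderline $kp'=2$) is exactly the computation the paper leaves implicit.
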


 \begin{proof}[Proof of \thm{estmain}]
Let $h\in (0, \delta]$ be fixed.
Taking into account that $f\in\M_+^k$,  $\Delta_{h\varphi(x)}^k (f,x)\geq 0$   and   \prop{prchange}{b} with $\eta =2k^2h^2$  and $\lambda_i :=(i-k/2)h$, $0\leq i\leq k$,  we have
 \begin{eqnarray} \label{ineq1}
\lefteqn{  \|\wbb\Delta_{h\varphi}^kf\|_{\LL_1[-1+2k^2h^2 ,1-2k^2h^2]} }\\ \nonumber
& = &
 \sum_{i=0}^k {k \choose i} (-1)^{k-i}  \int_{-1+2k^2h^2}^{1-2k^2h^2}  \wbb(x) f(x+\lambda_i\varphi(x)) \, dx \\ \nonumber
 & = &
 \sum_{i=0}^k {k \choose i} (-1)^{k-i} \int^{1-2k^2h^2 + (2i-k) kh^2 \sqrt{1-k^2h^2}}_{-1+2k^2h^2 + (2i-k) kh^2 \sqrt{1-k^2h^2}}
 \wbb(\psi(\lambda_i,y)) f(y) {\partial \psi (\lambda_i,y) \over \partial y} \, dy \\ \nonumber
 & = &
 \sum_{i=0}^k {k \choose i} (-1)^{k-i}  \left(
 \int_0^{1-2k^2h^2 -  k^2h^2 \sqrt{1-k^2h^2}}
   +
 \int^{1-2k^2h^2 + (2i-k) kh^2 \sqrt{1-k^2h^2}}_{1-2k^2h^2 -  k^2h^2 \sqrt{1-k^2h^2}}
 \right) \\ \nonumber
 && \wbb(\psi(\lambda_i,y)) f(y) {\partial \psi (\lambda_i,y) \over \partial y}\, dy \\ \nonumber
 & = :&
 \sum_{i=0}^k {k \choose i} (-1)^{k-i} \left(\I_c +\I_r\right)\,.
\end{eqnarray}

It follows from \prop{prchange}{e} that
\be \label{auxweight}
\wbb(x) \sim \wbb(x+\lambda\varphi(x)) , \quad \text{for}\quad |x|\leq 1-\eta \andd |\lambda|\leq \sqrt{\eta}/2 .
\ee
In particular, this implies that
\[
\wbb(\psi(\lambda,y))) \sim \wbb(y), \quad \text{for}\quad y\in [-1+\eta+\lambda\sqrt{2\eta-\eta^2}, 1-\eta+\lambda\sqrt{2\eta-\eta^2}] \andd |\lambda|\leq \sqrt{\eta}/2 .
\]

Hence, noting also that \prop{prchange}{c} implies that
   $\left|{\partial \psi(\lambda_i, y) / \partial y} \right| \leq 2$, for all $0\leq i\leq k$, we have
 \begin{eqnarray} \label{ineq2}
\left|\sum_{i=0}^k {k \choose i} (-1)^{k-i}  \I_r  \right|  &\leq &
c
  \int_{1-3k^2h^2}^{1-k^2h^2}   \left|\wbb(y) f(y)\right| \, dy
  \leq
  c  \norm{\wbb f}{\LL_1[1-3k^2 \delta^2, 1]}  .
\end{eqnarray}
Now,
 \[
\left| \sum_{i=0}^k {k \choose i} (-1)^{k-i}  \I_c   \right|
    =
\left|\int_0^{1-2k^2h^2 -  k^2h^2 \sqrt{1-k^2h^2}}
f(y) A_k(y,h)\, dy \right|
\leq
\int_0^{1-2k^2h^2} |f(y)| |A_k(y,h)|\, dy ,
\]
where
 \begin{eqnarray*}
A_k(y,h) & :=& \sum_{i=0}^k {k \choose i} (-1)^{k-i}\wbb(\psi(\lambda_i,y))  \widetilde \psi (\lambda_i,y)
\end{eqnarray*}
and
\[
\widetilde \psi(\lambda_i,y) := {\partial \psi (\lambda_i,y) \over \partial y} = {\lambda_i y + \sqrt{1-y^2 + \lambda_i^2} \over (1+\lambda_i^2) \sqrt{1-y^2 + \lambda_i^2} } .
\]
Suppose now that $y\in [0,1-2k^2h^2]$ is fixed and, for convenience, denote $\vartheta := \varphi(y)$. Then $\vartheta \geq \sqrt{3} kh$.

Note that
\[
A_k(y,h)  = \sum_{i=0}^k {k \choose i} (-1)^{k-i}  g_y(\lambda_i /\vartheta) = \Delta_{h/\vartheta}^k(g_y, 0)     ,
\]
where
\[
g_y(t) := \wbb(\psi(t\vartheta,y))   \widetilde \psi(t\vartheta,y) .
\]

Recall that, if $g^{(m)}$ is continuous on $[x-m\mu/2,x+m\mu/2]$, then for some $\xi \in (x-m\mu/2, x+m\mu/2)$,
\be \label{aux}
\Delta^m_\mu(g,x) = \mu^m g^{(m)}(\xi)\,.
\ee

Hence,
\be \label{aky}
|A_k(y,h)| = | \Delta_{h/\vartheta} ^k(g_y, 0) | \leq  h^k \vartheta^{-k}   \norm{ {d^k  \over dt^k}g_y(t) }{\C[-1/2, 1/2]} .
\ee

We now note that
\[
\varphi (\psi(t\vartheta, y)) =  \vartheta {  t y + \sqrt{1+t^2}  \over  1+t^2\vartheta^2 } =  { \vartheta  \over    \sqrt{1 +t^2} - t y }
\]
and
\[
\widetilde \psi(t\vartheta, y)    =   {t y  + \sqrt{1 +t^2} \over  (1+t^2\vartheta^2) \sqrt{1 +t^2}  } = { 1  \over  \sqrt{1 +t^2}- t y  } \cdot {1 \over \sqrt{1 +t^2} }
\]
and, in particular,
\[
\widetilde \psi(t\vartheta, y)  ={ \varphi (\psi(t\vartheta, y)) \over \vartheta \sqrt{1 +t^2}} .
\]
Therefore, recalling that $\wbb = \varphi^{2\beta}$ we have
\[
g_y(t)  =  {\varphi^{2\beta+1}(\psi(t\vartheta, y))\over \vartheta \sqrt{1 +t^2}}
  =   \vartheta^{2\beta} (1 +t^2)^{-\beta-1} \left( 1 - {ty \over\sqrt{1+t^2}}  \right)^{-2\beta-1}  .
\]

\begin{remark} \label{aarem}
If $G_y(t):=  (g_y(t) + (-1)^k   g_y(-t))/2$, then
$A_k(y,h) =   \Delta_{h/\vartheta}^k(G_y, 0)$.
If $\beta=-1/2$ and $k$ is odd, then $G_y$ is identically equal to $0$, and so $|A_k(y,h)| = 0$.
Also, if $\beta=0$ and $k$ is even, then
$G_y(t) = (1+t^2 \vartheta^2)^{-1}$,
and so $|G_y^{(k)}(t)| \leq c \vartheta^k$ and $|A_k(y,h)| \leq c h^k$. Hence, \ineq{estest} can be improved in these cases.
\end{remark}

Noting that $|t|y/\sqrt{1+t^2} < 1$, we have the following expansion into binomial series
\[
\left( 1 - {ty \over\sqrt{1+t^2}}  \right)^{-2\beta-1} = \sum_{i=0}^\infty {-2\beta-1 \choose i} (-1)^i {t^iy^i \over (1+t^2)^{i/2}} ,
\]
and so
\[
g_y(t) = \vartheta^{2\beta } \sum_{i=0}^\infty {-2\beta-1 \choose i} (-1)^i {t^i y^{i} \over (1+t^2)^{\beta+1+i/2}} .
\]

The derivatives of this series are uniformly convergent  on   $[-1,1]$ (to take a simple interval) because it can be easily seen
that, for $|t|\leq 1$,
\[
\left|{d^k \over dt^k} {t^i   \over (1+t^2)^{\beta+1+i/2}} \right|
   \leq
 c \sum_{j=0}^{k} \left|\left[\left({t  \over \sqrt{1+t^2}}\right)^i \right]^{(j)}\right|
 \leq
 c \sum_{j=0}^{\min\{i,k\}} (i+1)^j  \left({|t|  \over \sqrt{1+t^2}}\right)^{i-j}
   \leq
 c (i+1)^k 2^{-i/2 } .
\]

Hence, for $|t|\leq 1$,
\[
\left| {d^k \over dt^k} g_y(t)\right|  \leq c  \vartheta^{2\beta  } \sum_{i=0}^\infty \left|{-2\beta-1 \choose i}\right| (i+1)^k  2^{-i/2}  \leq c \vartheta^{2\beta}.
\]
Estimate \ineq{aky} now implies that
\[
|A_k(y,h)| \leq c h^k \vartheta^{2\beta-k} ,
\]
and so
\begin{eqnarray} \label{ineq3}
\left| \sum_{i=0}^k {k \choose i} (-1)^{k-i}  \I_c   \right|
& \leq &
c h^k \int_0^{1-2k^2h^2} (1-y^2)^{\beta-k/2} |f(y)| \, dy .
\end{eqnarray}
Together with \ineq{ineq1}, inequalities   \ineq{ineq2} and \ineq{ineq3}   imply that
\begin{eqnarray} \label{ineq4}
 \Omega_\varphi^k(f,\delta)_{\wbb,1} & = &
c    \norm{\wbb f}{\LL_1[1-3k^2 \delta^2,1]}   \\  \nonumber
  &&    +  c  \sup_{0<h\leq \delta} h^k \norm{ (1-y^2)^{-k/2} \wbb(y) f(y)}{\LL_1[0,1- 2k^2 h^2]} .
\end{eqnarray}
Finally, \lem{lemineq1} (that we prove in Section~\ref{section4} for all $q\geq 1$) with $q=1$, together with \ineq{ineq4}, implies \ineq{estest}.
\end{proof}

\sect{Upper estimates for $q >1$} \label{section4}

\begin{lemma}\label{lemineq1}
Let $k\in\N$,   $1\leq q < \infty$,  $\alpha,\beta\in\R$, and $f\in\M^k_+ \cap \Wqab$. Then
\[
\overleftarrow\Omega_\varphi^k(f,\delta)_{\wab,q}   \leq c \norm{\wab f}{\Lq[1- 2k^2 \delta^2,1]}.
\]
\end{lemma}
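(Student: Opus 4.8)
The plan is to reduce the lemma to an entirely elementary pointwise estimate on the backward difference, using the fact that a $k$-monotone function vanishing to the left of $0$ is simultaneously nonnegative and nondecreasing. First I would record that, for every $k\in\N$, any $f\in\M_+^k$ is nonnegative and nondecreasing on $(-1,1)$: when $k\geq 2$ this is \cor{cormplus} applied with $j=0$ and $j=1$, while for $k=1$ nondecreasing is the definition of $\M_+^1$ and nonnegativity follows from $f\equiv 0$ on $(-1,0]$ together with monotonicity. In particular, for any $x\in[1-2k^2\delta^2,1]$ and any point $y\leq x$ one has $0\leq f(y)\leq f(x)$ (if $y\in(0,x)$ this is monotonicity and nonnegativity; if $y\leq 0$ then $f(y)=0$, and in that case $x\leq 0$ forces $f(x)=0$ as well, so the inequality still holds).

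Next I would fix $0<h\leq 2k^2\delta^2$ and a point $x\in[1-2k^2\delta^2,1]$ at which the difference is switched on, and expand
\[
\overleftarrow\Delta_h^k(f,x)=\Delta_h^k(f,x-kh/2)=\sum_{j=0}^k{k\choose j}(-1)^j f(x-jh).
\]
The crucial structural point is that $x$ is the \emph{rightmost} of the nodes $x-jh$, $0\leq j\leq k$, so each node satisfies $x-jh\leq x$, and the previous paragraph gives $0\leq f(x-jh)\leq f(x)$ for every $j$ (nodes falling in $(-1,0]$ simply contribute $0$, and nodes dipping below $-1$ only switch the difference off, so cause no harm). The triangle inequality then produces the crude but decisive bound
\[
|\overleftarrow\Delta_h^k(f,x)|\leq \sum_{j=0}^k{k\choose j}f(x-jh)\leq\Bigl(\sum_{j=0}^k{k\choose j}\Bigr)f(x)=2^k f(x).
\]

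Finally, since $f\geq 0$ gives $f(x)=|f(x)|$ and the \emph{same} weight $\wab(x)$ multiplies both sides at the same point, I would multiply by $\wab(x)$, raise to the $q$-th power, integrate over $[1-2k^2\delta^2,1]$, and take the $q$-th root to obtain
\[
\norm{\wab\overleftarrow\Delta_h^k(f)}{\Lq[1-2k^2\delta^2,1]}\leq 2^k\norm{\wab f}{\Lq[1-2k^2\delta^2,1]};
\]
taking the supremum over $0<h\leq 2k^2\delta^2$ then yields $\overleftarrow\Omega_\varphi^k(f,\delta)_{\wab,q}\leq 2^k\norm{\wab f}{\Lq[1-2k^2\delta^2,1]}$, i.e.\ the claim with $c=2^k$. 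I do not expect any genuine obstacle: the argument is uniform in $q\in[1,\infty)$ (exactly as stated), no weight-comparison estimate is needed because the weight is evaluated at the same point on both sides, and the only matter requiring a word of care is the harmless bookkeeping of nodes that leave the integration interval or the domain $(-1,1)$, all of which is absorbed by the monotonicity inequality $0\leq f(x-jh)\leq f(x)$.
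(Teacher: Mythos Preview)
Your proof is correct and follows essentially the same approach as the paper: both arguments use \cor{cormplus} to get that $f$ is nonnegative and nondecreasing, then bound each term $f(x-jh)$ of the backward difference by $f(x)$ at the rightmost node. The only cosmetic difference is that you establish the pointwise bound $|\overleftarrow\Delta_h^k(f,x)|\leq 2^k f(x)$ first and then integrate (obtaining the explicit constant $c=2^k$), whereas the paper applies the $\Lq$-triangle inequality inside the integral and then uses $|f(x-ih)|\leq|f(x)|$ term by term.
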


\begin{proof} \cor{cormplus} implies that $f$ is non-negative and non-decreasing on $[0,1]$ and so, for  any $0<h\le 2k^2\delta^2$, we have
\begin{eqnarray*}
\lefteqn{  \|\wab \overleftarrow\Delta_h^k (f)\|_{\Lq[1-2k^2\delta^2,1]}^q   \leq
c \int_{1-2k^2\delta^2}^1  \sum_{i=0}^k \left[{k \choose i}\right]^q \wab^q(x) |f(x-ih)|^q \, dx} \\
& & \leq   c \sum_{i=0}^k \int_{1-2k^2\delta^2}^1     \wab^q(x) |f(x)|^q \, dx
  \leq   c \norm{\wab f}{\Lq[1- 2k^2 \delta^2,1]}^q ,
\end{eqnarray*}
and it remains to take supremum over   $h\in (0, 2k^2\delta^2]$.
\end{proof}

By  H\"{o}lder's inequality, the following corollary is an immediate consequence of \lem{lemineq1}.

\begin{corollary}\label{corineq1}
Let $k\in\N$,   $1\leq q < p\leq \infty$, $\alpha,\beta\in\R$, and $f\in\M^k_+ \cap \Wpab$. Then
\[
\overleftarrow\Omega_\varphi^k(f,\delta)_{\wab,q}   \leq c \delta^{2/q-2/p} \norm{\wab f}{\Lp[1- 2k^2 \delta^2,1]} .
\]
\end{corollary}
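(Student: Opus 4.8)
The plan is to derive the estimate directly from \lem{lemineq1} by a single application of H\"older's inequality on the short interval adjacent to the right endpoint, exactly as the statement ``By H\"older's inequality, the following corollary is an immediate consequence'' suggests. First I would invoke \lem{lemineq1} with the exponent $q$, which already yields
\[
\overleftarrow\Omega_\varphi^k(f,\delta)_{\wab,q} \leq c \norm{\wab f}{\Lq[1-2k^2\delta^2,1]} .
\]
This is legitimate under the present hypotheses: since $f\in\M^k_+ \cap \Wpab$ with $q<p$ and the interval $[1-2k^2\delta^2,1]$ is bounded, we have $\wab f\in\Lq[1-2k^2\delta^2,1]$, so the assumption $f\in\M^k_+\cap\Wqab$ needed locally by \lem{lemineq1} on this subinterval is met.

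Next I would bound the $\Lq$ norm on the right by the $\Lp$ norm. Writing $I := [1-2k^2\delta^2,1]$, so that $|I| = 2k^2\delta^2$, H\"older's inequality with the conjugate exponents $p/q$ and $(p/q)'$ (reading $1/p=0$ when $p=\infty$) gives
\[
\norm{\wab f}{\Lq(I)} \leq |I|^{1/q-1/p} \norm{\wab f}{\Lp(I)} = (2k^2\delta^2)^{1/q-1/p} \norm{\wab f}{\Lp(I)} .
\]
Since $1\leq q<p$, the exponent $1/q-1/p$ is positive, and $(2k^2\delta^2)^{1/q-1/p} = c\,\delta^{2/q-2/p}$ with $c=c(k,p,q)$ independent of $\delta$. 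Combining the two displays gives precisely
\[
\overleftarrow\Omega_\varphi^k(f,\delta)_{\wab,q} \leq c\,\delta^{2/q-2/p} \norm{\wab f}{\Lp[1-2k^2\delta^2,1]} ,
\]
as claimed.

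There is no genuine obstacle in this argument; it is a routine corollary. The only two points deserving a moment's care are the verification that the hypotheses of \lem{lemineq1} hold on the subinterval $I$ (which follows from the boundedness of $I$ together with $q<p$) and the correct interpretation of the H\"older exponent in the limiting case $p=\infty$, where $1/p=0$ and the inequality reduces to $\norm{\wab f}{\Lq(I)} \leq |I|^{1/q}\norm{\wab f}{\LL_\infty(I)}$.
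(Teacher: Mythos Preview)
Your argument is correct and follows exactly the approach indicated in the paper: apply \lem{lemineq1} and then H\"older's inequality on the interval $[1-2k^2\delta^2,1]$. The only remark is that your caution about verifying $f\in\Wqab$ ``locally'' is unnecessary, since $[-1,1]$ has finite measure and $q<p$ already give $\Wpab\subset\Wqab$ globally.
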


\begin{lemma}  \label{lema}
Let   $1 < q< \infty$, $\alpha,\beta \in\R$,  and let $f\in\Wqab$   be nonnegative on $[-1,1]$. Then,
\[
\omega_\varphi^1(f,\delta)_{\wab, q} \leq c
\omega_\varphi^1(f^q,\delta)_{w_{q\alpha, q\beta}, 1}^{1/q}.
\]
\end{lemma}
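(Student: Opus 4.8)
The plan is to reduce the weighted $\Lq$-modulus of $f$ to the weighted $\LL_1$-modulus of $f^q$ by means of a single elementary pointwise inequality, applied separately to each of the three constituents $\Omega_\varphi^1$, $\overrightarrow\Omega_\varphi^1$, $\overleftarrow\Omega_\varphi^1$ of the modulus in \ineq{mod}. It is worth noting at the outset that $k$-monotonicity plays no role here: only the nonnegativity of $f$ is used.

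The one piece of genuine content is the elementary inequality that, for $a,b\geq 0$ and $q\geq 1$,
\be \label{starineq}
|a-b|^q \leq |a^q - b^q| .
\ee
To verify \ineq{starineq} one may assume $a\geq b$, divide by $a^q$, and check that $(1-t)^q\leq 1-t^q$ on $[0,1]$, which is immediate by differentiation. Since $f\geq 0$, applying \ineq{starineq} with $a=f(x+h\varphi(x)/2)$ and $b=f(x-h\varphi(x)/2)$ gives the pointwise bound
\be
|\Delta_{h\varphi}^1(f,x)|^q \leq |\Delta_{h\varphi}^1(f^q,x)| ,
\ee
and the same bound holds verbatim for the forward and backward differences $\overrightarrow\Delta_h^1$ and $\overleftarrow\Delta_h^1$. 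Here one should observe that the zeroing-out convention in the definition of $\Delta_h^k$ depends only on $x,h,k$ and not on the function, so \ineq{starineq} also covers the exceptional set on which both differences vanish.

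Next I would combine this with the weight identity $\wab^q = w_{q\alpha,q\beta}$ and integrate. For each fixed $h$ and the relevant interval $I$,
\be
\|\wab \Delta_{h\varphi}^1(f)\|_{\Lq(I)}^q = \int_I w_{q\alpha,q\beta}|\Delta_{h\varphi}^1(f)|^q \leq \int_I w_{q\alpha,q\beta}|\Delta_{h\varphi}^1(f^q)| = \|w_{q\alpha,q\beta}\Delta_{h\varphi}^1(f^q)\|_{\LL_1(I)} ,
\ee
so that $\|\wab\Delta_{h\varphi}^1(f)\|_{\Lq(I)}\leq \|w_{q\alpha,q\beta}\Delta_{h\varphi}^1(f^q)\|_{\LL_1(I)}^{1/q}$. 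Taking the supremum over $0<h\leq\delta$, and using that $t\mapsto t^{1/q}$ is increasing so that the supremum passes through the root, yields $\Omega_\varphi^1(f,\delta)_{\wab,q}\leq \Omega_\varphi^1(f^q,\delta)_{w_{q\alpha,q\beta},1}^{1/q}$; the identical computation on the two boundary intervals gives the analogous bounds for $\overrightarrow\Omega$ and $\overleftarrow\Omega$.

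Finally I would assemble the three estimates. Writing $A$, $B$, $C$ for the three $\LL_1$-constituents of $\omega_\varphi^1(f^q,\delta)_{w_{q\alpha,q\beta},1}$, the definition \ineq{mod} gives
\be
\omega_\varphi^1(f,\delta)_{\wab,q}\leq A^{1/q}+B^{1/q}+C^{1/q}\leq 3(A+B+C)^{1/q}=3\,\omega_\varphi^1(f^q,\delta)_{w_{q\alpha,q\beta},1}^{1/q} ,
\ee
where the middle step uses $X^{1/q}\leq(X+Y+Z)^{1/q}$ for $X,Y,Z\geq 0$ (each term being bounded by the sum under the increasing map $t\mapsto t^{1/q}$). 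This is the claim with $c=3$. The main, and indeed only, obstacle is the pointwise inequality \ineq{starineq}; everything after it is weight bookkeeping together with the monotonicity of $t\mapsto t^{1/q}$.
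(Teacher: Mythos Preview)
Your proof is correct and follows essentially the same approach as the paper's: both hinge on the pointwise inequality $|\Delta_\mu^1(f,x)|^q\leq|\Delta_\mu^1(f^q,x)|$ for nonnegative $f$ (which the paper cites from \cite[Lemma 3.4]{k2009} and you prove directly), followed by integrating against $\wab^q=w_{q\alpha,q\beta}$ and passing to suprema on each of the three constituents of \ineq{mod}. You make explicit the final assembly step $A^{1/q}+B^{1/q}+C^{1/q}\leq 3(A+B+C)^{1/q}$ that the paper leaves to the reader.
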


\begin{remark}
If $f\in\M^1\cap \Wqab$, $1 < q < \infty$, is nonnegative on $[-1,1]$, then $f^q \in \M^1 \cap \W_1^{q\alpha,q\beta}$.
\end{remark}

\begin{proof}
Let $1 < q< \infty$, and let $f\in\Wqab$   be nonnegative on $[-1,1]$.
It was shown in the proof of \cite[Lemma 3.4]{k2009} (and is easy to see) that,
\[
\left| \Delta_\mu^1(f,x) \right|^q \leq   \left| \Delta_\mu^1(f^q,x) \right| , \quad \mu >0 .
\]
This implies
\begin{eqnarray*}
\lefteqn{ \Omega_\varphi^1(f,\delta)_{\wab, q}^q  =  \sup_{0 < h\leq \delta} \int_{-1+2h^2}^{1-2h^2} \left|\wab(x) \Delta_{h\varphi(x)}^1 (f ,x)\right|^q dx }\\
&&\leq  \sup_{0 < h\leq \delta} \int_{-1+2h^2}^{1-2h^2}  \wab^{q}(x) \left| \Delta_{h\varphi(x)}^1 (f^q ,x)\right|  dx
 =     \Omega_\varphi^1(f^q,\delta)_{w_{q\alpha, q\beta}, 1}
\end{eqnarray*}
and, similarly,
\begin{eqnarray*}
\lefteqn{ \overleftarrow\Omega_\varphi^1(f,\delta)_{\wab,q}^q
  =
\sup_{0<h\leq 2\delta^2} \int_{1-2\delta^2}^1 \left|\wab (x) |\overleftarrow\Delta_h^1(f,x)   \right|^q \, dx} \\
& & \leq
\sup_{0<h\leq 2\delta^2} \int_{1-2\delta^2}^1  \wab^q (x) \left|\overleftarrow\Delta_h^1(f^q,x)\right|  \, dx
 =
\overleftarrow\Omega_\varphi^1(f^q,\delta)_{w_{q\alpha, q\beta},1}  ,
\end{eqnarray*}
and, since $\overrightarrow\Omega_\varphi^1(f,\delta)_{\wab,q}$ can be estimated similarly,  the proof is complete.
\end{proof}

\begin{lemma} \label{lemb}
Let   $1 < q< \infty$, $\alpha,\beta\in\R$, and let $f\in\M^2 \cap \Wqab$ be nonnegative on $[-1,1]$. Then, $f^q \in \M^2\cap \W_1^{q\alpha,q\beta}$, and
\[
\omega_\varphi^2(f,\delta)_{\wab, q} \leq c
\omega_\varphi^2(f^q,\delta)_{w_{q\alpha, q\beta}, 1}^{1/q}.
\]
\end{lemma}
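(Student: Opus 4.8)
The plan is to follow the pattern of the proof of \lem{lema}, replacing the first-difference inequality used there by an analogous pointwise estimate for second differences of nonnegative convex functions, and to dispatch the two membership claims by elementary observations.

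First I would verify that $f^q\in\M^2\cap\W_1^{q\alpha,q\beta}$. Convexity of $f^q$ is immediate: since $t\mapsto t^q$ is nondecreasing and convex on $[0,\infty)$ for $q\ge 1$, and $f$ is nonnegative and convex, the composition $f^q$ is convex, hence $f^q\in\M^2$. Membership in $\W_1^{q\alpha,q\beta}$ follows from the identity $\norm{w_{q\alpha,q\beta}f^q}{1}=\int_{-1}^1\big[(1+x)^\alpha(1-x)^\beta f(x)\big]^q\,dx=\norm{\wab f}{q}^q<\infty$, where I use $\wab^q=w_{q\alpha,q\beta}$.

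The crux is the pointwise inequality
\[
\big(\Delta_\mu^2(f,x)\big)^q\le 2^{q-1}\,\Delta_\mu^2(f^q,x),\qquad \mu>0,
\]
valid wherever the three nodes lie in $[-1,1]$ (where a difference is truncated to zero both sides vanish). Writing $u=f(x-\mu)$, $v=f(x)$, $w=f(x+\mu)$, these are nonnegative and, by convexity of $f$, satisfy $v\le (u+w)/2$, so both second differences are nonnegative and the claim reads $(u-2v+w)^q\le 2^{q-1}(u^q-2v^q+w^q)$. I would prove this by fixing $u,w\ge 0$ and studying $F(v):=2^{q-1}(u^q-2v^q+w^q)-(u-2v+w)^q$ on $[0,(u+w)/2]$. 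At both endpoints $F$ reduces to $2^{q-1}(u^q+w^q)-(u+w)^q$, which is nonnegative by convexity of $t\mapsto t^q$; and a direct computation gives $F''(v)=2q(q-1)\big[-2(u-2v+w)^{q-2}-2^{q-1}v^{q-2}\big]<0$ on the open interval, so $F$ is concave there. A continuous function that is concave on the interior of an interval attains its minimum at an endpoint, so $F\ge 0$ throughout, which is the desired inequality. This elementary but slightly delicate concavity argument is the step I expect to be the main obstacle, since one must track signs carefully and handle the mild singularity of $F''$ at $v=(u+w)/2$ when $1<q<2$ (it only occurs at the endpoint, where concavity is not needed).

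Finally I would assemble the modulus estimate exactly as in \lem{lema}. Inserting the pointwise inequality into the integrals defining $\Omega_\varphi^2$, $\overrightarrow\Omega_\varphi^2$ and $\overleftarrow\Omega_\varphi^2$, and using $\wab^q=w_{q\alpha,q\beta}$ together with $\Delta_\mu^2(f^q,x)\ge 0$, yields
\[
\Omega_\varphi^2(f,\delta)_{\wab,q}^q\le 2^{q-1}\,\Omega_\varphi^2(f^q,\delta)_{w_{q\alpha,q\beta},1},
\]
and likewise for the forward and backward parts. Taking $q$-th roots and summing the three pieces, the elementary inequality $a^{1/q}+b^{1/q}+c^{1/q}\le 3^{1-1/q}(a+b+c)^{1/q}$ (Jensen for the concave map $t\mapsto t^{1/q}$) converts the sum of roots into a root of the sum, giving $\omega_\varphi^2(f,\delta)_{\wab,q}\le c\,\omega_\varphi^2(f^q,\delta)_{w_{q\alpha,q\beta},1}^{1/q}$ with $c=6^{(q-1)/q}$, as claimed.
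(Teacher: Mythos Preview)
Your proposal is correct and follows essentially the same route as the paper: both rely on the pointwise inequality $(\Delta_\mu^2(f,x))^q\le 2^{q-1}\Delta_\mu^2(f^q,x)$ for nonnegative convex $f$, and then assemble the modulus estimate exactly as in \lem{lema}. The only difference is that the paper cites this pointwise inequality from \cite[Lemma~3.5]{k2009}, whereas you supply a clean self-contained concavity argument for it; your endpoint evaluation and sign analysis of $F''$ are correct, and the remark about the harmless singularity of $F''$ for $1<q<2$ is apt.
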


\begin{proof}
It was shown in the proof of \cite[Lemma 3.5]{k2009} that, for any nonnegative  convex function $f$,
\[
\left( \Delta_\mu^2(f,x) \right)^q \leq 2^{q-1}  \Delta_\mu^2(f^q,x)\,, \quad \mu >0 ,
\]
 and the rest of the proof is  analogous to that of \lem{lema}.
\end{proof}

Now, taking into account that, for a nonnegative $f$,  $\norm{w_{q\alpha,q\beta} f^q}{p/q}^{1/q} = \norm{\wab f}{p}$, and using Lemmas~\ref{lema}, \ref{lemb}  and \cor{maincorcor} (with $p/q$ instead of $p$) we get the following result.

\begin{corollary}  \label{coraux}
Let $k=1$ or $k=2$, $\beta \in \R$, $1 < q <p\leq \infty$, $f\in \M_+^k\cap \Wpbb$, and $0<\delta\leq 1/(2k)$. Then
\begin{eqnarray*}
\omega_\varphi^k(f,\delta)_{\wbb,q}
 &\leq&  c  \norm{\wbb f}{p}
\left\{
\begin{array}{ll}
\delta^{2/q-2/p}\,, &   \mbox{\rm if  $k=2$ and $p<\infty$, or $k=1$ and $p < 2q$, }\\
\delta^{2/q}| \ln \delta |^{1/q}\,, &   \mbox{\rm if $k = 2$ and $ p=\infty $, }\\
\delta^{1/q}  | \ln \delta |^{1/(2q)}   \,, &   \mbox{\rm if $k=1$ and $p=2q$,}\\
 \delta^{1/q} \,, &  \mbox{\rm if $k=1$ and $p>2q$. }
\end{array} \right.
\end{eqnarray*}
\end{corollary}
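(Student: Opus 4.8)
The plan is to reduce the statement to the already-established case $q=1$ by passing from $f$ to the function $f^q$. First I would record that, since $f\in\M_+^k$ with $k\in\{1,2\}$, \cor{cormplus} gives $f\in\M_+^0$, so $f$ is identically $0$ on $(-1,0]$ and nonnegative on $[0,1]$, hence nonnegative on $(-1,1)$. This is precisely the hypothesis needed to invoke \lem{lema} when $k=1$ and \lem{lemb} when $k=2$ (both applied with the symmetric choice $\alpha=\beta$). Either lemma yields
\be
\omega_\varphi^k(f,\delta)_{\wbb,q} \leq c\,\omega_\varphi^k(f^q,\delta)_{w_{q\beta,q\beta},1}^{1/q},
\ee
which converts a $\Lq$-modulus of $f$ into the $1/q$-th power of an $\LL_1$-modulus of $f^q$.

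Next I would verify that $f^q$ lands in the class to which \cor{maincorcor} applies. For $k=1$ the function $f^q$ is nonnegative and, being zero on $[-1,0]$, lies trivially in $\M_+^1$; for $k=2$, \lem{lemb} asserts $f^q\in\M^2\cap\W_1^{q\beta,q\beta}$, and since $f$ vanishes on $[-1,0]$ so does $f^q$, giving $f^q\in\M_+^2$. Moreover the elementary identity $\norm{w_{q\beta,q\beta} f^q}{p/q}^{1/q}=\norm{\wbb f}{p}$ shows $f^q\in\W_{p/q}^{q\beta,q\beta}$, the right-hand side being finite by hypothesis. I can therefore apply \cor{maincorcor} to $f^q$ with weight parameter $q\beta$ and with $p/q$ in place of $p$; the constraint $0<\delta\le 1/(2k)$ is unchanged. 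Writing $P:=p/q\in(1,\infty]$, this bounds $\omega_\varphi^k(f^q,\delta)_{w_{q\beta,q\beta},1}$ by $c\,\norm{w_{q\beta,q\beta}f^q}{P}$ times the appropriate factor from \ineq{ininmaincor}.

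Finally I would raise this bound to the power $1/q$, use the norm identity to replace $\norm{w_{q\beta,q\beta}f^q}{P}^{1/q}$ by $\norm{\wbb f}{p}$, and translate the $\delta$-factors. The case ranges match up exactly: $1\le P<2$, $P=2$, $2<P\le\infty$, and $P=\infty$ correspond respectively to $p<2q$, $p=2q$, $p>2q$, and $p=\infty$. The exponent arithmetic is routine, as $(2-2/P)/q=2/q-2/p$, while the factor $\delta^2|\ln\delta|$ (case $k=2$, $P=\infty$) becomes $\delta^{2/q}|\ln\delta|^{1/q}$, the factor $\delta\sqrt{|\ln\delta|}$ (case $k=1$, $P=2$) becomes $\delta^{1/q}|\ln\delta|^{1/(2q)}$, and the factor $\delta$ (case $k=1$, $P>2$) becomes $\delta^{1/q}$. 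Since all of the substantive analysis has already been carried out in \lem{lema}, \lem{lemb} and \cor{maincorcor}, there is no genuine obstacle at this stage; the only points requiring care are the consistent bookkeeping of which range of $P=p/q$ produces which power of $\delta$ and $|\ln\delta|$, and the verification that $f^q$ retains the monotonicity and integrability needed to feed into \cor{maincorcor}.
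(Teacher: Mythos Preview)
Your proposal is correct and follows precisely the route the paper takes: invoke \lem{lema} or \lem{lemb} (with $\alpha=\beta$) to pass from $\omega_\varphi^k(f,\delta)_{\wbb,q}$ to $\omega_\varphi^k(f^q,\delta)_{w_{q\beta,q\beta},1}^{1/q}$, use the norm identity $\norm{w_{q\beta,q\beta} f^q}{p/q}^{1/q}=\norm{\wbb f}{p}$, and then apply \cor{maincorcor} to $f^q$ with $p/q$ in place of $p$. Your additional care in checking that $f^q\in\M_+^k$ and in spelling out the exponent bookkeeping is exactly what the paper's one-line justification leaves implicit.
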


 Lemmas~\ref{lemreduct} and \ref{lemreducttwo} now imply upper estimates in \thm{maintheorem} for $k=1$ and $k=2$ and $q>1$ except for the case $(k,p) = (2,  \infty)$, which will be dealt with separately in the next section.

 We will now finish the proof of the upper estimates in the case $k\geq 3$.
 It follows from \cite[Theorem 6.2.5]{dt} that
\be \label{ktwo}
\Omega_\varphi^k(f,\delta)_{\wab,q} \leq c \Omega_\varphi^2(f,\delta)_{\wab,q} , \quad k\geq 3.
\ee
Now, suppose that $f\in\M_+^k \cap \Wpbb$, $k\geq 3$. \cor{cormplus} implies that $f\in\M_+^2$, and so using
\cor{corineq1} and \ineq{ktwo} we have
\[
\omega_\varphi^k(f,\delta)_{\wbb,q} \leq c \Omega_\varphi^2(f,\delta)_{\wbb,q} + \overleftarrow\Omega_\varphi^k(f,\delta)_{\wbb,q}
\leq c \omega_\varphi^2(f,\delta)_{\wbb,q} + \delta^{2/q-2/p} \norm{\wbb f}{p} .
\]
We have already proved that
\be \label{lastin}
\omega_\varphi^2(f,\delta)_{\wbb,q} \leq \delta^{2/q-2/p} \norm{\wbb f}{p},  \quad  f\in\M_+^2 \cap \Wpbb ,
 \ee
 in the case $q>1$ and $p<\infty$, and will prove it for $q>1$ and $p=\infty$ in the next section, and so   upper estimates of \thm{maintheorem} for $k\geq 3$ and $q>1$ now follow from Lemmas~\ref{lemreduct} and \ref{lemreducttwo}.

 Hence, in order to finish the proof of all upper estimates in \thm{maintheorem} it remains to prove \ineq{lastin} in the case $q>1$ and $p=\infty$. This is done in Section~\ref{newsectionpinf} (see \lem{corauxxxxxx}).

\sect{Improvement of estimates for convex functions if  $q>1$} \label{newsectionpinf}

For $n\in\N$, we define $t_i := \cos\left(i\pi/n\right)$, $0\leq i\leq n$, and  $I_i := [t_i, t_{i-1}]$, $1\leq i\leq n$. Recall that $(t_i)_{0}^n$ is the so-called Chebyshev partition of $[-1,1]$. Some of its properties are stated in the following proposition that can be verified by straightforward computations.

\begin{proposition} \label{cheb}
For each $n\in\N$,  the following statements are valid.
\begin{enumerate}[(a)]
\item \label{cha}
For  $2\leq i\leq n-1$ and $x\in I_i$, $2\varphi(x)/n \leq  |I_i|  \leq 5 \varphi(x)/n$,
and $2 n^{-2} \leq |I_1| = |I_n| \leq 5 n^{-2}$.
\item \label{chb}
 $|I_{j-1}|/3 \leq |I_i| \leq 3 |I_{j-1}|$, $2\leq i\leq n$.
\item \label{chc}
For any $n\in\N$, $1\leq j\leq n$ and $\lambda \leq 1/n$, $t_j + \lambda \varphi(t_j) \leq t_{j-1} - \lambda \varphi(t_{j-1})$.
\end{enumerate}

\end{proposition}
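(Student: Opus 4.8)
The plan is to reduce every estimate in the proposition to a single trigonometric identity for the length of a Chebyshev interval and then to control sines whose arguments lie in $(0,\pi)$ and differ by at most $\pi/n$. Writing $t_i=\cos(i\pi/n)$ and applying the product-to-sum formula gives
\[
|I_i| = t_{i-1}-t_i = \cos\frac{(i-1)\pi}{n} - \cos\frac{i\pi}{n} = 2\sin\frac{(2i-1)\pi}{2n}\,\sin\frac{\pi}{2n},\qquad 1\le i\le n,
\]
while for $x=\cos\theta\in I_i$ one has $\varphi(x)=\sin\theta$ with $\theta$ ranging over the interval $[(i-1)\pi/n,\,i\pi/n]$ of length $\pi/n$ whose midpoint is exactly $(2i-1)\pi/2n$. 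Thus each quantity in the proposition is a product or ratio of such sines, and the only elementary tools I would invoke are Jordan's inequality $\tfrac2\pi x\le\sin x\le x$ on $[0,\pi/2]$ (whence $\tfrac1n\le\sin\frac{\pi}{2n}\le\frac{\pi}{2n}$) together with the concavity and monotonicity of $\sin$ on $(0,\pi)$.

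I would dispose of part (c) first, as it is the cleanest. By the sum-to-product formula, $\varphi(t_j)+\varphi(t_{j-1})=\sin\frac{j\pi}{n}+\sin\frac{(j-1)\pi}{n}=2\sin\frac{(2j-1)\pi}{2n}\cos\frac{\pi}{2n}$, and the asserted inequality is equivalent to $\lambda\bigl(\varphi(t_j)+\varphi(t_{j-1})\bigr)\le t_{j-1}-t_j=|I_j|$. Since $\sin\frac{(2j-1)\pi}{2n}>0$ for $1\le j\le n$, inserting $\lambda\le 1/n$ and the identity for $|I_j|$ and cancelling that common factor reduces the claim to $\frac1n\cos\frac{\pi}{2n}\le\sin\frac{\pi}{2n}$, i.e. $\frac1n\le\tan\frac{\pi}{2n}$, which is immediate from $\tan x\ge x$ and $\frac{\pi}{2n}>\frac1n$.

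For part (a) I would combine the identity with the two elementary facts. The case $i=1$ (and $i=n$ by symmetry) is a direct computation: $|I_1|=1-\cos\frac{\pi}{n}=2\sin^2\frac{\pi}{2n}$, and the Jordan bounds $\frac1n\le\sin\frac{\pi}{2n}\le\frac{\pi}{2n}$ yield $2n^{-2}\le|I_1|\le\frac{\pi^2}{2}n^{-2}\le 5n^{-2}$, which is precisely where the constant $5$ comes from. For $2\le i\le n-1$ I would show that $\varphi(x)=\sin\theta$ stays comparable to $\sin\frac{(2i-1)\pi}{2n}$ as $\theta$ runs over the interval of length $\pi/n$, and then multiply by $\sin\frac{\pi}{2n}\sim 1/n$; the extremal configurations are $x\to 0$ (the middle interval), where $|I_i|\sim\sin\frac{\pi}{n}$ and Jordan's lower bound forces the constant $2$, and $x\to t_1$ or $t_{n-1}$, where $n|I_i|/\varphi(x)$ approaches $3\pi/2<5$. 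Part (b) then follows from the same identity, since the ratio of adjacent lengths is $|I_i|/|I_{i-1}|=\sin\frac{(2i-1)\pi}{2n}\big/\sin\frac{(2i-3)\pi}{2n}$, a quotient of sines whose arguments differ by $\pi/n$; the worst cases occur at $i=2$, where the triple-angle identity gives $\sin\frac{3\pi}{2n}\big/\sin\frac{\pi}{2n}=3-4\sin^2\frac{\pi}{2n}\le 3$, and symmetrically at $i=n$, where the reciprocal gives the bound $\ge 1/3$, explaining why $3$ is the correct constant.

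The main obstacle is purely bookkeeping rather than conceptual: extracting the precise constants $2$ and $5$ in part (a) for the interior intervals, because the comparison $\varphi(x)\sim\sin\frac{(2i-1)\pi}{2n}$ across an interval of length $\pi/n$ interacts with the factor $n\sin\frac{\pi}{2n}\in[1,\pi/2]$, and these interact most delicately exactly at the extreme indices $i=2,\,n-1$ and for small $n$, where a handful of cases may need to be checked directly. Since none of this is difficult, the proposition is legitimately stated as verifiable by straightforward computation.
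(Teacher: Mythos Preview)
Your approach is correct and is exactly the kind of elementary verification the paper has in mind: the paper gives no proof of this proposition, stating only that it ``can be verified by straightforward computations,'' and your reductions via the product--to--sum identity $|I_i|=2\sin\frac{(2i-1)\pi}{2n}\sin\frac{\pi}{2n}$ together with Jordan's inequality and $\tan x\ge x$ supply precisely those computations. Your treatment of (c) is clean and complete; your handling of the endpoint intervals in (a) and of (b) (interpreting the evident typo $|I_{j-1}|$ as $|I_{i-1}|$) is also correct, and your candid acknowledgment that the interior case of (a) requires a little bookkeeping at the extreme indices is accurate but harmless --- this is exactly why the paper leaves the proposition as an exercise.
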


 \begin{lemma} \label{lem100}
 Let   $0<\delta < 1/100$, $\beta\in\R$, and let $f\in \M^2 \cap \W_q^{\beta,\beta}$, $1 < q<\infty$, be such that its restrictions to  $[-1,-1+100\delta^2]$ and $[1-100\delta^2, 1]$ are linear polynomials.
  Then
\[
\Omega_\varphi^2 (f, \delta)_{\varphi^{2\beta}, q} \leq c  \delta^{1/q-1} \Omega_\varphi^2(f,   \delta)_{\varphi^{2\beta  -1+1/q}, 1}  .
\]
\end{lemma}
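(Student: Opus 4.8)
The plan is to collapse the supremum over all scales $0<h\le\delta$ onto the single largest scale $h=\delta$, and then to control the pointwise size of the nonnegative second difference of the convex function $f$ by a weighted $\L_1$-average over a short window. Set
\[
A(h):=\int_{-1+8h^2}^{1-8h^2}\varphi^{2\beta q}(x)\,\bigl(\Delta_{h\varphi}^2(f,x)\bigr)^q\,dx ,
\]
so that, since $\Delta_{h\varphi}^2(f,x)\ge 0$ for $f\in\M^2$, we have $\Omega_\varphi^2(f,\delta)_{\varphi^{2\beta},q}^q=\sup_{0<h\le\delta}A(h)$.

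\emph{First I would reduce to $h=\delta$, showing $\sup_{0<h\le\delta}A(h)=A(\delta)$.} This rests on two facts. On the extra strips $[-1+8h^2,-1+8\delta^2]$ and $[1-8\delta^2,1-8h^2]$ the difference $\Delta_{h\varphi}^2(f,x)$ vanishes: for such $x$ the symmetric stencil $x\pm h\varphi(x)$ either leaves $[-1,1]$ (so $\Delta^2=0$ by definition), or, using $\varphi(x)\le 4\delta$ together with $h\le\delta$ and $\delta<1/100$, stays inside $[1-100\delta^2,1]$ (resp. $[-1,-1+100\delta^2]$), where $f$ is affine and hence $\Delta^2=0$ again. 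Thus the domain of $A(h)$ may be shrunk to $[-1+8\delta^2,1-8\delta^2]$. On that domain $\prop{prchange}{a}$ guarantees the stencils of both $\Delta_{h\varphi}^2$ and $\Delta_{\delta\varphi}^2$ stay interior, and for convex $f$ with a fixed interior stencil $\Delta_\mu^2(f,x)$ is nondecreasing in $\mu$; since $h\le\delta$ forces $h\varphi(x)\le\delta\varphi(x)$, this yields the pointwise bound $\Delta_{h\varphi}^2(f,x)\le\Delta_{\delta\varphi}^2(f,x)$, whence $A(h)\le A(\delta)$. This reduction is the crux: the hypothesis that $f$ is affine near $\pm1$ is used precisely to kill the boundary contributions so that monotonicity in the step applies, and it is exactly what prevents small $h$ from producing the wrong power of $\delta$.

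\emph{Next I would factor via H\"older.} Using the exact splitting $\varphi^{2\beta q}(\Delta_{\delta\varphi}^2 f)^q=\varphi^{2\beta-1+1/q}\Delta_{\delta\varphi}^2 f\cdot\bigl(\varphi^{2\beta+1/q}\Delta_{\delta\varphi}^2 f\bigr)^{q-1}$ and pulling the last factor out in sup-norm,
\[
A(\delta)\le\Bigl(\sup_{x}\varphi^{2\beta+1/q}(x)\Delta_{\delta\varphi}^2(f,x)\Bigr)^{q-1}\int_{-1+8\delta^2}^{1-8\delta^2}\varphi^{2\beta-1+1/q}(x)\Delta_{\delta\varphi}^2(f,x)\,dx\le\Bigl(\sup_{x}\varphi^{2\beta+1/q}(x)\Delta_{\delta\varphi}^2(f,x)\Bigr)^{q-1}B ,
\]
where $B:=\Omega_\varphi^2(f,\delta)_{\varphi^{2\beta-1+1/q},1}$ and the suprema run over $x\in[-1+8\delta^2,1-8\delta^2]$. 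It therefore suffices to establish the pointwise estimate $\sup_x\varphi^{2\beta+1/q}(x)\Delta_{\delta\varphi}^2(f,x)\le c\,\delta^{-1}B$, for then $A(\delta)\le(c\delta^{-1}B)^{q-1}B=c\,\delta^{1-q}B^q$, which is the claim after taking $q$-th roots.

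\emph{Finally I would prove the pointwise bound by a convexity averaging estimate.} Fix $x_0$, write $\mu_0:=\delta\varphi(x_0)$, and let $\nu:=df'\ge 0$ be the second-derivative measure of the convex $f$, so that $\Delta_\mu^2(f,x)=\int_{-\mu}^{\mu}(\mu-|u|)\,d\nu(x+u)$. On the window $W:=[x_0-\mu_0/4,x_0+\mu_0/4]$ one has $\varphi(x)\sim\varphi(x_0)$ by $\prop{prchange}{e}$, so the step $\delta\varphi(x)\sim\mu_0$ is essentially constant; a Fubini computation with the tent kernel then shows that the central value is dominated by the window average, $\Delta_{\delta\varphi}^2(f,x_0)\le c\,\mu_0^{-1}\int_W\Delta_{\delta\varphi}^2(f,x)\,dx$. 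Multiplying by $\varphi^{2\beta+1/q}(x_0)$ and using $\varphi^{2\beta+1/q}(x_0)/\mu_0=\delta^{-1}\varphi^{2\beta-1+1/q}(x_0)\sim\delta^{-1}\varphi^{2\beta-1+1/q}(x)$ on $W$ gives
\[
\varphi^{2\beta+1/q}(x_0)\Delta_{\delta\varphi}^2(f,x_0)\le\frac{c}{\delta}\int_W\varphi^{2\beta-1+1/q}(x)\Delta_{\delta\varphi}^2(f,x)\,dx\le\frac{c}{\delta}\,B ,
\]
as required (for $x_0$ near $\pm1$ one shrinks $W$ toward the endpoint, the affine part of $f$ again removing any boundary trouble). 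The main obstacle is the first step — recognizing that linearity near the endpoints together with monotonicity of second differences in the step lets one replace $\sup_{h\le\delta}$ by the single value at $h=\delta$; the averaging estimate of the last step is the expected, and more routine, convexity input.
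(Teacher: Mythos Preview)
Your proposal is correct and takes a genuinely different route from the paper's proof.

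The paper proceeds by a Chebyshev--partition decomposition: setting $n=\lfloor 1/\delta\rfloor$, it splits the integral into sums over Chebyshev intervals $I_i$, subtracts on each $I_i$ the secant line $p_i$ interpolating $f$ at the endpoints of a slightly larger interval $\widetilde I_i$, uses convexity to pass from $\|f-p_i\|_{\C(\widetilde I_i)}$ to $|\widetilde I_i|^{-1}\|f-p_i\|_{\LL_1(\widetilde I_i)}$, then invokes a Whitney theorem for $k$-monotone functions together with an averaged-modulus lemma to reassemble the $\LL_1$-modulus. Your argument avoids the partition entirely: the monotonicity of $\Delta_\mu^2(f,x)$ in $\mu$ for convex $f$, together with the affine boundary hypothesis, collapses the supremum over $h$ to the single value $h=\delta$; a H\"older splitting reduces matters to the pointwise bound $\varphi^{2\beta+1/q}\Delta_{\delta\varphi}^2 f\le c\delta^{-1}B$; and this you obtain by the tent--kernel representation $\Delta_\mu^2(f,x)=\int(\mu-|y-x|)_+\,d\nu(y)$ and a Fubini averaging over a window of length $\sim\mu_0$. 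Your approach is more self-contained (no external Whitney/averaging lemmas) and makes the role of convexity more transparent; the paper's route is more modular and would adapt to higher-order differences where monotonicity in the step is unavailable.

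One small caution on your third step: for the Fubini/tent--kernel bound to hold with your window $W=[x_0-\mu_0/4,x_0+\mu_0/4]$, you need the quantitative lower bound $\delta\varphi(x)\ge c_1\mu_0$ on $W$ with $c_1>3/4$ (otherwise a point mass of $\nu$ near $x_0\pm\mu_0$ gives a counterexample). The crude constants from \prop{prchange}{e} only yield $c_1=1/4$; you should instead compute directly that $|x-x_0|\le\mu_0/4\le(1-|x_0|)/8$ forces $1\pm x\in[\tfrac{7}{8}(1\pm x_0),\tfrac{9}{8}(1\pm x_0)]$ and hence $c_1=7/8$, which suffices. Alternatively, widen $W$ to half-width $\mu_0$.
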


\begin{proof}
First, note that, for $0<h\leq \delta$, if $|x|\geq 1-85\delta^2$ then   $|x|- h\varphi(x)  \geq 1-100\delta^2$, and so $\Delta_{h\varphi(x)}^2(f,x) = 0$ if
 $x \in [-1,-1+85\delta^2] \cup [1-85 \delta^2, 1]$.
Therefore,
\begin{eqnarray*}
\Omega_\varphi^2 (f, \delta)_{\wbb, q}^q & = & \sup_{0< h \leq \delta} \norm{\wbb \Delta_{h\varphi}^2(f)}{\Lq[-1+8h^2 ,1-8h^2]}^q
  \leq    \sup_{0< h \leq \delta} \norm{\wbb \Delta_{h\varphi}^2(f)}{\Lq[-1+85\delta^2 ,1-85\delta^2]}^q .
\end{eqnarray*}
Now, note that, for each $m\in\N$ and $n \geq 2m+1$, if $\eta \geq 5m^2/n^2$, then $[-1+\eta ,1-\eta] \subset [t_{n-m}, t_m]$.
Hence, if we let  $n:= \lfloor 1/\delta \rfloor$  then $[-1+85\delta^2 ,1-85\delta^2] \subset [t_{n-4}, t_4] = \bigcup_{i=5}^{n-4} I_i$,
and so
\[
\Omega_\varphi^2 (f, \delta)_{\wbb, q}^q
  \leq
\sup_{0<h\leq \delta} \sum_{i=5}^{n-4} \int_{I_i} |\wbb(x) \Delta_{h\varphi(x)}^2(f,x)|^q \, dx .
\]
Since $h\leq \delta\leq 1/n$,  \prop{cheb}{chc} implies that if $x\in I_i$, then $x\pm  h\varphi(x) \in \tI_i := [t_{i+1}, t_{i-2}]$.

Now,  for $5\leq i\leq n-4$, let $p_i$ be the linear
 polynomial   interpolating $f$ at the endpoints of $\tI_i$, and let $g_i := f-p_i$.
 If $x_0 \in \tI_i$ is such that $\norm{g_i}{\C(\tI_i)} = |g_i(x_0)|$ (recall that convex functions are continuous in the interior of their domains), using the fact that $g_i$ is convex (and so lies below its secant lines)  and is $0$ at the endpoints of $\tI_i$,  we get
 \[
 \frac 12 |\tI_i| \norm{g_i}{\C(\tI_i)} = \frac 12 |\tI_i| |g_i(x_0)| \leq \int_{\tI_i} |g_i(x)|\, dx ,
 \]
 and so
 \[
 \norm{f-p_i}{\C(\tI_i)} \leq 2 |\tI_i|^{-1}  \norm{f-p_i}{\LL_1(\tI_i)} , \quad 5\leq i\leq n-4.
 \]
Therefore, recalling that $\wbb = \varphi^{2\beta}$ and using the fact that $\wbb(x)\sim \wbb(t_i)$, $x\in I_i$, and \prop{cheb}{cha} we have
\begin{eqnarray*}
\Omega_\varphi^2 (f, \delta)_{\wbb, q}^q & \leq &  \sup_{0<h\leq \delta} \sum_{i=5}^{n-4} \int_{I_i} \left|\varphi^{2\beta} (x) \Delta_{h\varphi(x)}^2(f-p_i,x)\right|^q \, dx \\
& \leq & c
  \sum_{i=5}^{n-4}   \varphi^{2\beta q}(t_i) |I_i| \norm{f-p_i}{\C(\tI_i)}^q   \\
  & \leq & c
 \sum_{i=5}^{n-4}   \varphi^{2\beta q}(t_i) |I_i|^{1-q} \norm{f-p_i}{\LL_1(\tI_i)}^q   \\
   & \leq & c  \sum_{i=5}^{n-4}  n^{q-1}  \varphi^{2\beta q-q+1}(t_i)   \norm{f-p_i}{\LL_1(\tI_i)}^q   \\
 & \leq & c n^{q-1} \left( \sum_{i=5}^{n-4}    \varphi^{2\beta  -1+1/q}(t_i)   \norm{f-p_i}{\LL_1(\tI_i)} \right)^q ,
 \end{eqnarray*}
where, in the last estimate, we used the inequality $\sum |a_i|^q \leq (\sum |a_i|)^q$.

It follows from \cite[Theorem 1]{k-whitney} that
\[
\norm{f-p_{i}}{\LL_1(\tI_i)} \leq c \omega_2(f, |\ttI_i|, \ttI_i)_1 ,  \quad 5\leq i\leq n-4,
\]
where $\ttI_i := [t_{i+2}, t_{i-3}]$ (since $\tI_i$ is in the ``interior'' of $\ttI_i$), and $\omega_2(f, \mu, I)$ is the usual second modulus on $I$.
 Proposition~\ref{cheb}(\ref{cha},\ref{chb}) implies that  $n|\ttI_i|/\varphi(x) \sim 1$, $x\in\ttI_i$, and, in particular,
$|\ttI_i|/\varphi(x) \leq c_*/n$, for some absolute constant $c_*$.
Now,
 \cite[Lemma 7.2, p. 191]{pp} yields
\[
\omega_2(f, \mu, [a,b])_1 \leq {c  \over  \mu} \int_0^{\mu} \int_a^b |\Delta_h^2 (f, x, [a,b])|  \, dx \, dh ,
\]
and hence
\begin{eqnarray*}
\omega_2(f, |\ttI_i|, \ttI_i)_1  & \leq &  c \omega_2(f, |\ttI_i|/(2c_*), \ttI_i)_1 \\
& \leq &
{c  \over |\ttI_i|} \int_{\ttI_i} \int_0^{|\ttI_i|/(2c_*)}  |\Delta_h^2 (f, x, \ttI_i )|  \, dh \, dx   \\
& \leq & {c  \over |\ttI_i|} \int_{\ttI_i} \int_0^{|\ttI_i|/(2c_*\varphi(x))}  \varphi(x) |\Delta_{h\varphi(x)}^2 (f, x, \ttI_i )|  \, dh \, dx   \\
& \leq & c n \int_{\ttI_i} \int_0^{1/(2n)}    |\Delta_{h\varphi(x)}^2 (f, x )|  \, dh \, dx  .
 \end{eqnarray*}
Therefore,
\begin{eqnarray*}
\Omega_\varphi^2 (f, \delta)_{\wbb, q}^q  & \leq &
c n^{q-1} \left( \sum_{i=5}^{n-4}    \varphi^{2\beta  -1+1/q}(t_i)    n \int_{\ttI_i} \int_0^{1/(2n)}    |\Delta_{h\varphi(x)}^2 (f, x )|  \, dh \, dx \right)^q \\
& \leq &
c  n^{2q-1} \left( \int_0^{1/(2n)} \sum_{i=5}^{n-4} \int_{\ttI_i} \varphi^{2\beta  -1+1/q}(x)|\Delta_{h\varphi(x)}^2 (f, x )|   \, dx\, dh \right)^q \\
& \leq &
c  n^{q-1} \left(\sup_{0<h\leq 1/(2n)}      \int_{t_{n-1}}^{t_1}  \varphi^{2\beta  -1+1/q}(x)|\Delta_{h\varphi(x)}^2 (f, x )|   \, dx  \right)^q \\
& \leq &
c  n^{q-1} \left(\sup_{0<h\leq 1/(2n)}      \int_{-1+8h^2}^{1-8h^2}  \varphi^{2\beta  -1+1/q}(x)|\Delta_{h\varphi(x)}^2 (f, x )|   \, dx  \right)^q  \\
& \leq &
c  n^{q-1} \Omega_\varphi^2(f, 1/(2n))_{\varphi^{2\beta  -1+1/q}, 1}^q ,
 \end{eqnarray*}
and it remains to recall that $n = \lfloor 1/\delta \rfloor$ and so, in particular,  $1/(2n) < \delta \leq 1/n$.
\end{proof}

\begin{lemma}  \label{corauxxxxxx}
Let   $ \beta \in\R$, $1 < q < \infty$  and $f\in \M^2_+\cap \W_\infty^{\beta,\beta}$. Then
\[
\omega_\varphi^2(f,\delta)_{\wbb,q}
  \leq   c \delta^{2/q} \norm{\wbb f}{\infty} .
\]
\end{lemma}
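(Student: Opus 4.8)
The plan is to split $f$ as $f=\tilde f+g$, where $\tilde f$ is made linear near both endpoints of $[-1,1]$ (so that \lem{lem100} applies) and $g$ is a convex remainder supported near $1$, and to bound $\omega_\varphi^2(\tilde f,\delta)_{\wbb,q}$ and $\omega_\varphi^2(g,\delta)_{\wbb,q}$ separately. Write $M:=\norm{\wbb f}{\infty}$ and $\beta':=\beta-1/2+1/(2q)$, so that $\varphi^{2\beta'}=\varphi^{2\beta-1+1/q}$ is precisely the weight produced by \lem{lem100}. It suffices to treat $0<\delta<1/100$: for $\delta\geq 1/100$ the estimate is immediate since $\delta^{2/q}\geq c$ and $\omega_\varphi^2(f,\delta)_{\wbb,q}\leq cM$ (for $\beta\geq0$ by the remark following \thm{maintheorem}, while for $\beta<0$ the class $\M_+^2\cap\W_\infty^{\beta,\beta}$ is trivial, a nonnegative nondecreasing $f$ with $f(0)=0$ and $\varphi^{2\beta}f$ bounded being identically $0$).

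Fix $\delta\in(0,1/100)$ and set $a:=1-100\delta^2\in(0,1)$. By \cor{cormplus}, $f$ is nonnegative and nondecreasing on $[0,1]$, so $f_-'(a)\geq0$ exists; let $\ell(x):=f(a)+f_-'(a)(x-a)$ be the left-tangent line of $f$ at $a$, and put $\tilde f:=f$ on $[-1,a]$, $\tilde f:=\ell$ on $[a,1]$, and $g:=f-\tilde f$. Since $\ell$ supports the convex function $f$ from below, $0\leq\tilde f\leq f$ and $0\leq g\leq f$; moreover $\tilde f,g\in\M_+^2$ (the derivative of each is nondecreasing across $a$), $g\equiv0$ on $[0,a]$, and $\tilde f$ is linear on each of $[-1,-1+100\delta^2]$ (where it vanishes) and $[a,1]$. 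The key point is that $0\leq\tilde f\leq f$ and $0\leq g\leq f$ give at once the weighted bounds $\norm{\wbb\tilde f}{\infty}\leq M$ and $\norm{\wbb g}{\infty}\leq M$, equivalently $\tilde f(x),\,g(x)\leq M\varphi^{-2\beta}(x)$.

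I would bound the remainder first. As $g\geq0$ is convex, \lem{lemb} yields $\omega_\varphi^2(g,\delta)_{\wbb,q}\leq c\,\omega_\varphi^2(g^q,\delta)_{w_{q\beta,q\beta},1}^{1/q}$, and $g^q\in\M_+^2$ vanishes on $[0,a]=[0,1-100\delta^2]$, so \cor{seccor} (with $k=2$, $p=\infty$, $A=100$) gives $\omega_\varphi^2(g^q,\delta)_{w_{q\beta,q\beta},1}\leq c\delta^2\norm{w_{q\beta,q\beta}\,g^q}{\infty}=c\delta^2M^q$, whence $\omega_\varphi^2(g,\delta)_{\wbb,q}\leq c\delta^{2/q}M$. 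For $\tilde f$, its forward and backward parts vanish: $\overrightarrow\Delta_h^2(\tilde f)$ is evaluated on $[-1,-1+8\delta^2]$ where $\tilde f\equiv0$, and $\overleftarrow\Delta_h^2(\tilde f)$ on $[1-8\delta^2,1]\subset[a,1]$ where $\tilde f$ is linear, so $\omega_\varphi^2(\tilde f,\delta)_{\wbb,q}=\Omega_\varphi^2(\tilde f,\delta)_{\wbb,q}$. Applying \lem{lem100} to the admissible $\tilde f$ gives $\Omega_\varphi^2(\tilde f,\delta)_{\wbb,q}\leq c\delta^{1/q-1}\Omega_\varphi^2(\tilde f,\delta)_{\varphi^{2\beta'},1}$. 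I would estimate the last modulus by $\Omega\leq\omega$ and \thm{estmain} with weight exponent $\beta'$ applied to $\tilde f\in\M_+^2\cap\W_1^{\beta',\beta'}$: using $\tilde f\leq M\varphi^{-2\beta}$, the first term has integrand at most $M\varphi^{2\beta'-2\beta}=M(1-y^2)^{(-1+1/q)/2}$, whose exponent exceeds $-1$, so $\norm{\varphi^{2\beta'}\tilde f}{\LL_1[1-12\delta^2,1]}\leq cM\delta^{1+1/q}$; the second term has integrand at most $M(1-y^2)^{(-3+1/q)/2}$, and $\sup_{0<h\leq\delta}h^2\int_0^{1-8h^2}(1-y^2)^{(-3+1/q)/2}\,dy\leq cM\delta^{1+1/q}$. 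Hence $\Omega_\varphi^2(\tilde f,\delta)_{\varphi^{2\beta'},1}\leq cM\delta^{1+1/q}$ and $\Omega_\varphi^2(\tilde f,\delta)_{\wbb,q}\leq c\delta^{1/q-1}\cdot cM\delta^{1+1/q}=cM\delta^{2/q}$. Combining through $\omega_\varphi^2(f,\delta)_{\wbb,q}\leq\omega_\varphi^2(\tilde f,\delta)_{\wbb,q}+\omega_\varphi^2(g,\delta)_{\wbb,q}$ finishes the proof.

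The delicate step, and the one I expect to be the main obstacle, is the construction of $\tilde f$: it must be linear near both endpoints (to feed \lem{lem100}), remain $2$-monotone, and inherit a weighted sup bound strong enough that running \thm{estmain} with the shifted, \emph{more singular} weight $\varphi^{2\beta'}$ still yields convergent integrals. Choosing $\tilde f$ via the supporting line, so that $0\leq\tilde f\leq f$, is exactly what makes both $\norm{\wbb\tilde f}{\infty}\leq M$ and the integrability hold for every $\beta\in\R$, the latter through the cancellation $\varphi^{2\beta'}\cdot\varphi^{-2\beta}=\varphi^{-1+1/q}$, which is integrable precisely because $q>1$.
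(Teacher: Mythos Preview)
Your proof is correct and follows essentially the same approach as the paper's: the same splitting of $f$ at $1-100\delta^2$ via a supporting line, the same use of \lem{lemb} and \cor{seccor} for the remainder $g$, and the same combination of \lem{lem100} with \thm{estmain} (at the shifted weight $\varphi^{2\beta-1+1/q}$) for $\tilde f$. The only cosmetic differences are that the paper uses $f'_+$ rather than $f'_-$ for the supporting line and does not spell out the trivial cases $\delta\geq 1/100$ and $\beta<0$.
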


\begin{proof} Let $0<\delta <1/100$, denote $x_0 := 1-100 \delta^2$, and define
\[
 f_1 (x) :=
\begin{cases}
f(x) , & \mbox{\rm if } x \leq x_0 , \\
f(x_0) + f'_+(x_0)(x-x_0) , & \mbox{\rm if } x_0 < x \leq 1 ,
\end{cases}
\]
 Clearly, $f_1 \in \M^2_+$ and, since $0\leq f_1 (x) \leq f(x)$, $x_0 \leq x \leq 1$, we conclude that $\norm{\wbb f_1 }{\infty} \leq \norm{\wbb  f}{\infty}$.
 Also,   $f_2:= f- f_1\in \M^2_+$ is such that $f_2(x) = 0$ if $x \leq x_0$ and $\norm{\wbb f_2 }{\infty} \leq   \norm{\wbb  f}{\infty}$, and so \lem{lemb} and \cor{seccor} imply that
\[
\omega_\varphi^2(f_2,\delta)_{\wbb,q}   \leq
c \omega_\varphi^2(f_2^q,\delta)_{w_{q\beta, q\beta},1}^{1/q}
 \leq   c \left(\delta^{2} \norm{w_{q\beta, q\beta}  f_2^q}{\infty}\right)^{1/q}  \leq c \delta^{2/q} \norm{\wbb f}{\infty} .
 \]
Now, since $\overleftarrow\Omega_\varphi^2(f_1,\delta)_{\wbb,q} = 0$, by \lem{lem100} and \thm{estmain} we have
\begin{eqnarray*}
\omega_\varphi^2(f_1,\delta)_{\wbb,q}  & = & \Omega_\varphi^2(f_1,\delta)_{\wbb,q}
  \leq   c  \delta^{1/q-1} \Omega_\varphi^2(f_1,   \delta)_{\varphi^{2\beta  -1+1/q}, 1}\\
  & \leq&  c  \delta^{1/q-1} \norm{\varphi^{2\beta  -1+1/q}  f_1}{\LL_1[1-12\delta^2, 1]} +  c  \delta^{1/q-1} \sup_{0<h\leq   \delta} h^2 \norm{\varphi^{2\beta  -3+1/q} f_1}{\LL_1[0, 1-8h^2]} \\
   & \leq&  c  \delta^{1/q-1} \norm{\varphi^{2\beta} f_1}{\infty} \norm{\varphi^{ -1+1/q}}{\LL_{1}[1-12\delta^2, 1]}  \\
   && + c  \delta^{1/q-1}\norm{\varphi^{2\beta } f_1}{\infty} \sup_{0<h\leq  \delta} h^2 \norm{\varphi^{-3+1/q}}{\LL_{1}[0, 1-8h^2]}  \\
   & \leq & c \delta^{2/q} \norm{\wbb f}{\infty} ,
 \end{eqnarray*}
where, in the last estimate, we used
\[
\norm{\varphi^{-\gamma}}{\LL_{1}[1-c \delta^2, 1]} \leq
 c
\delta^{-\gamma+2 } , \quad   \mbox{\rm if} \quad  \gamma   <2 ,
 \]
 and
\[
 \norm{\varphi^{-\gamma}}{\LL_{1}[0, 1-c h^2]} \leq
 c
 h^{-\gamma+2 } , \quad \mbox{\rm if} \quad   \gamma  >2 .
 \]
\end{proof}

Together with Lemmas~\ref{lemreduct} and \ref{lemreducttwo}, this now completes the proof of the upper estimate in \thm{maintheorem}  in the case $k=2$, $p=\infty$ and $q>1$.

\sect{Lower estimates of moduli}

The following lemma verifies the lower estimate in \ineq{aaaomega}.

\begin{lemma} \label{auxlemmatwox}
Let $k\in\N$, $\alpha,\beta \in\R$,   $0<p, q \leq \infty$, and $0<\delta\leq 1/(2k)$.
Then the function
\[
f_\delta(x) :=
\begin{cases}
(-1)^i  , & \mbox{\rm if} \quad \ds x \in  J_i  , \quad 0\leq i \leq  \lfloor 1/(2k\delta) \rfloor ,   \\
0, & \mbox{\rm otherwise,}
\end{cases}
\]
where $J_i := \left[  k\delta i,  k\delta(i+1/2)\right]$,
is such that $\norm{\wab f_\delta }{p} \sim 1$,  and
 \[
 \Omega_\varphi^k(f_\delta,\delta)_{\wab, q}   \geq c >0 .
\]
\end{lemma}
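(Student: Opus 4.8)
The plan has two parts: the elementary weighted norm equivalence $\norm{\wab f_\delta}{p}\sim 1$, and the lower bound on the main-part modulus.

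First I would check the norm. Since $\delta\leq 1/(2k)$, the support $\bigcup_i J_i$ lies in $[0,3/4]$, where $\wab(x)=(1+x)^\alpha(1-x)^\beta\sim 1$ with constants depending only on $\alpha,\beta$. The blocks $J_i$ are pairwise disjoint, each of length $k\delta/2$, and there are $\lfloor 1/(2k\delta)\rfloor+1\sim 1/(k\delta)$ of them, so the total measure of the support is $\sim 1/4$. Hence $\norm{\wab f_\delta}{p}\sim\norm{f_\delta}{p}\sim 1$ for all $0<p\leq\infty$.

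For the lower bound it suffices to produce a single admissible step $h\in(0,\delta]$ with $\norm{\wab\Delta_{h\varphi}^k(f_\delta)}{\Lq[-1+2k^2h^2,1-2k^2h^2]}\geq c$. The genuinely delicate point is that the stencil width $kh\varphi(x)$ varies with $x$ through $\varphi$, so it cannot be matched exactly to the oscillation period $k\delta$ of $f_\delta$; moreover, for $k\geq 2$ the constraint $h\leq\delta$ forbids a stencil as wide as a full period. I would bypass this by choosing $h:=\delta/4$, so that the stencil $\{x+(j-k/2)h\varphi(x)\}_{j=0}^k$ has total width $kh\varphi(x)\leq k\delta/4$, which is strictly less than the spacing $k\delta/2$ between consecutive discontinuities of $f_\delta$ (these occur at $k\delta i$ and $k\delta(i+1/2)$). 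Consequently every such stencil meets at most one jump of $f_\delta$.

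Fix a block index $i$ and the jump point $x_0:=k\delta(i+1/2)$, where $f_\delta$ drops from $(-1)^i$ to $0$; on $(x_0-k\delta/2,x_0+k\delta/2)$ the function equals $(-1)^i$ to the left of $x_0$ and $0$ to the right, so it agrees there with $(-1)^i$ times a downward Heaviside step. Writing $y_j(x):=x+(j-k/2)h\varphi(x)$, I would take $G_i$ to be the set of $x$ near $x_0$ for which exactly the rightmost node crosses $x_0$, i.e. $y_{k-1}(x)<x_0<y_k(x)$. Since each $y_j$ is continuous and increasing in $x$ for this small $h$, and $y_k-y_{k-1}=h\varphi\sim\delta$, the set $G_i$ is an interval of length $\sim\delta$, the whole stencil stays in $(x_0-k\delta/2,x_0+k\delta/2)$, and on $G_i$ one finds, using $\sum_{j=0}^k\binom{k}{j}(-1)^{k-j}=0$,
\[
\Delta_{h\varphi}^k(f_\delta,x)=(-1)^i\sum_{j=0}^{k-1}\binom{k}{j}(-1)^{k-j}=(-1)^{i+1}.
\]
Thus $|\Delta_{h\varphi}^k(f_\delta,x)|=1$ throughout $G_i$, and—crucially—this value is independent of the exact (varying) step, which is precisely what resolves the difficulty identified above.

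Finally I would sum over blocks. The sets $G_i$, for the $\sim 1/(k\delta)$ indices with $x_0\in[0,1/2]$, are pairwise disjoint, each of length $\sim\delta$ and contained in $[0,3/4]\subset[-1+2k^2h^2,1-2k^2h^2]$, where $\wab\sim 1$. Hence for $q<\infty$ we get $\norm{\wab\Delta_{h\varphi}^k(f_\delta)}{\Lq}^q\geq\sum_i\int_{G_i}\wab^q\,dx\geq c\,(1/(k\delta))\cdot\delta=c/k$, and for $q=\infty$ the supremum over $\bigcup_i G_i$ is $\geq c$; in either case $\Omega_\varphi^k(f_\delta,\delta)_{\wab,q}\geq c>0$. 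The only subtle step is the $x$-dependence of the stencil width, and the choice $h=\delta/4$ handles it by reducing the local computation to a single Heaviside jump, for which the $k$th difference is a fixed nonzero constant regardless of the step.
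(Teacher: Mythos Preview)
Your proof is correct and takes essentially the same approach as the paper: the norm estimate uses that the support lies in $[0,3/4]$ where $\wab\sim 1$, and the modulus lower bound is obtained by locating, near each jump of $f_\delta$, an interval of length $\sim h$ on which the stencil straddles exactly that one jump and hence $|\Delta_{h\varphi}^k(f_\delta,\cdot)|=1$, then summing over the $\sim 1/(k\delta)$ jumps. The only differences are cosmetic---the paper uses the left-endpoint jumps $k\delta i$ (its sets $D_i$) and leaves the choice of $h\sim\delta$ implicit in the supremum, whereas you fix $h=\delta/4$ and work with the right-endpoint jumps $k\delta(i+1/2)$---and your write-up is a bit more explicit about why the stencil meets only one discontinuity and why the $k$th difference equals $\pm 1$ there.
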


\begin{proof}
Since  $\bigcup_{i=0}^{\lfloor 1/(2k\delta)\rfloor}  J_i \subset [0, 3/4]$,
\[
\norm{\wab f_\delta }{p}^p \sim \sum_{i=0}^{\lfloor 1/(2k\delta)\rfloor} |J_i| = \left(\lfloor 1/(2k\delta)\rfloor +1 \right) k\delta/2 \sim 1.
\]
Now, note that, if $x\in J_i$ and $0<h\leq \delta$, then $x \pm kh \varphi(x)/2 \not\in \cup_{j\ne i} J_j$, and so
\begin{eqnarray*}
 \Omega_\varphi^k(f_\delta,\delta)_{\wab, q}^q & \geq & \sup_{0<h\le \delta}  \sum_{i=0}^{\lfloor 1/(2k\delta)\rfloor} \int_{D_i}   \wab^q(x) \, dx \sim \sup_{0<h\le \delta}\sum_{i=0}^{\lfloor 1/(2k\delta)\rfloor} |D_i| ,
\end{eqnarray*}
where
\[
D_i  := \left\{ x \st x+(k/2-1)h\varphi(x)  \leq k\delta i \leq x+kh\varphi(x)/2 \right\} .
\]
Since $|D_i| \sim h$, $0\leq i \leq \lfloor 1/(2k\delta)\rfloor$, we have
\[
 \Omega_\varphi^k(f_\delta,\delta)_{\wab, q}^q \geq c \delta \lfloor 1/(2k\delta)\rfloor \geq c .
\]
\end{proof}

\begin{remark} \label{lowpoly}
For each $n\in\N$, letting $k=1$  and $\delta := 1/(4n)$ in \lem{auxlemmatwox},   noting  that $f_\delta$ is positive on $n+1$ intervals and negative on $n$ intervals $J_i$, and that any polynomial of degree $\leq n$ can have at most $n$ sign changes on $[-1,1]$, we conclude that
\[
E_n(f_\delta)_{\wab, q} \geq  c (n \delta)^{1/q} \geq c >0 .
\]
This implies that, for any $\alpha,\beta\in\R$ and $0<p,q \leq \infty$,
\[
\E (\Bpab, \Pn)_{\wab,q} \geq c > 0.
\]

\end{remark}

The following result verifies the lower estimate in \ineq{inequalitym} in the case $k=1$ and $p>2q$. Its proof is elementary and will be omitted.
\begin{lemma} If $f(x) = \chi_{[0,1]}(x)$, $\alpha\in\R$ and $\beta\in J_p$,  then $f\in\M^1$, $\norm{\wab f}{p} \sim 1$, and
$\omega_\varphi^1(f,\delta)_{\wab, q} \sim \delta^{1/q}$, for any   $0<\delta < 1$.
\end{lemma}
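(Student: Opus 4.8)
The plan is to handle the three constituents of $\omega_\varphi^1(f,\delta)_{\wab,q}$ in the definition \ineq{mod} separately, after disposing of the two easy preliminary claims. That $f=\chi_{[0,1]}\in\M^1$ is immediate, since $f$ is nondecreasing on $(-1,1)$ (it equals $0$ on $(-1,0)$ and $1$ on $[0,1)$). For the norm, in the case $p<\infty$ I would write
\[
\norm{\wab f}{p}^p = \int_0^1 (1+x)^{\alpha p}(1-x)^{\beta p}\,dx \sim \int_0^1 (1-x)^{\beta p}\,dx,
\]
using $1+x\sim 1$ on $[0,1]$; this integral is finite and comparable to $1$ precisely because $\beta p>-1$, i.e. $\beta\in J_p$. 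For $p=\infty$ one notes that $\wab(x)\sim 1$ on $[0,1]$ (here $\beta\ge 0$ keeps $(1-x)^\beta$ bounded) and $f(0)=1$, so $\norm{\wab f}{\infty}\sim 1$ as well. Note that $\alpha$ plays no role since $f$ vanishes near $-1$.

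Next I would show that the two endpoint moduli vanish for $0<\delta<1/2$. On $[-1,-1+2\delta^2]$ and for $0<h\le 2\delta^2$ one has $\overrightarrow\Delta_h^1(f,x)=f(x+h)-f(x)$, and since $x+h\le -1+4\delta^2<0$ both values are $0$; hence $\overrightarrow\Omega_\varphi^1(f,\delta)_{\wab,q}=0$. Symmetrically, on $[1-2\delta^2,1]$ one has $\overleftarrow\Delta_h^1(f,x)=f(x)-f(x-h)$ with $x-h\ge 1-4\delta^2>0$, so both values are $1$ and $\overleftarrow\Omega_\varphi^1(f,\delta)_{\wab,q}=0$. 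Thus $\omega_\varphi^1(f,\delta)_{\wab,q}=\Omega_\varphi^1(f,\delta)_{\wab,q}$, and everything reduces to the main part.

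For the main part, the key observation is that $\Delta_{h\varphi(x)}^1(f,x)=f(x+h\varphi(x)/2)-f(x-h\varphi(x)/2)$ is nonzero (and then equals $1$) exactly on the set
\[
E_h:=\left\{x : x-h\varphi(x)/2<0\le x+h\varphi(x)/2\right\},
\]
where the two sampling points straddle the single jump of $f$ at $0$. Consequently,
\[
\Omega_\varphi^1(f,\delta)_{\wab,q}^q=\sup_{0<h\le\delta}\int_{E_h}\wab^q(x)\,dx.
\]
I would then observe that $x\in E_h$ forces $|x|<h\varphi(x)/2\le h/2\le\delta/2<1/2$, so that $\wab(x)\sim 1$ on $E_h$, reducing the problem to estimating $|E_h|$.

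Since the maps $x\mapsto x\pm h\varphi(x)/2$ are increasing on $[-1/2,1/2]$ (Proposition~\ref{prchange}), the set $E_h$ is an interval, and the main (though still elementary) step is the measure estimate $|E_h|\sim h$: the inclusion $E_h\subseteq(-h/2,h/2)$ yields $|E_h|\le h$, while $[0,\sqrt3\,h/4)\subseteq E_h$ (using $\varphi\ge\sqrt3/2$ on $[0,1/2]$) gives the matching lower bound. Hence $\int_{E_h}\wab^q\sim h$ uniformly in $h\le\delta$, so taking $h=\delta$ for the lower bound and the uniform estimate for the upper bound gives $\Omega_\varphi^1(f,\delta)_{\wab,q}^q\sim\sup_{0<h\le\delta}h=\delta$. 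Therefore $\omega_\varphi^1(f,\delta)_{\wab,q}=\Omega_\varphi^1(f,\delta)_{\wab,q}\sim\delta^{1/q}$, as claimed (with the case $q=\infty$ read as $\delta^0=1$).
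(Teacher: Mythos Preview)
Your argument is correct and is precisely the kind of elementary verification the paper has in mind; in fact the paper omits the proof entirely (``Its proof is elementary and will be omitted''), so there is nothing to compare against beyond confirming that your computation goes through. Your identification of the support $E_h$ of $\Delta_{h\varphi}^1(f,\cdot)$, the measure estimate $|E_h|\sim h$, and the reduction $\wab\sim 1$ on $E_h$ are all in order.

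One small point: you dispose of the edge terms $\overrightarrow\Omega_\varphi^1$ and $\overleftarrow\Omega_\varphi^1$ only for $0<\delta<1/2$, while the lemma is stated for all $0<\delta<1$. This is harmless --- for $\delta\in[1/2,1)$ both $\delta^{1/q}$ and the full modulus are trapped between two positive constants (the latter is monotone in $\delta$ and bounded above by $c\norm{\wab f}{q}$, using $\beta\in J_p\subset J_q$ since $q<p$ in the intended application) --- but you should say so explicitly rather than leave the range $[1/2,1)$ unaddressed.
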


\begin{lemma} \label{auxlemmaone}
Let $k\in\N$, $0<p, q \leq \infty$, $\alpha \in\R$, $\beta \in J_p$, $\delta>0$, and   $0<\e \leq \min\{2k^2\delta^2, 1\}$.
Then the function $f(x) :=  \lambda (x-1+\e)_+^{k-1}$, $\lambda := \e^{-k-\beta-1/p+1}$,
 is such that   $f\in\M^k$,   $\norm{\wab f }{p} \sim 1$,  and
 \[
 \omega_\varphi^k(f,\delta)_{\wab, q}   \geq c \e^{1/q-1/p} .
\]
\end{lemma}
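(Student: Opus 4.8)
The plan is to exploit the explicit structure of the truncated power $f(x)=\lambda(x-1+\e)_+^{k-1}$, which lives on $[1-\e,1]$, and to extract the lower bound from the backward summand $\overleftarrow\Omega_\varphi^k$ of the modulus alone, the other two summands being nonnegative.

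First I would record the $k$-monotonicity together with a closed form for the relevant difference. Since $\lambda>0$ and the $(k-1)$st derivative of $(x-1+\e)_+^{k-1}$ is $(k-1)!$ times the Heaviside function $H(x-1+\e)$, which is nondecreasing, $f\in\M^k$; equivalently, for arbitrary knots the $k$th divided difference of $(\cdot-1+\e)_+^{k-1}$ is a nonnegative B-spline value. The same computation supplies a formula: writing $u:=x-1+\e$ and letting $M_k$ denote the uniform B-spline of order $k$ (nonnegative, supported on $[0,k]$, and bounded below by a positive constant on, say, $[k/3,2k/3]$), one has
\[
\overleftarrow\Delta_h^k(f,x)=\lambda\sum_{j=0}^k\binom{k}{j}(-1)^j(u-jh)_+^{k-1}=c\,\lambda\,h^{k-1}M_k(u/h)\ge 0 ,
\]
with $c>0$ depending only on $k$ (for $k=1,2$ this is directly the step and the tent function).

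Second, the normalization $\norm{\wab f}{p}\sim1$ is a scaling computation. On $[1-\e,1]$ we have $(1+x)^\alpha\sim1$, so the substitution $x=1-\e+\e s$ turns $\norm{\wab f}{p}^p$ into $\lambda^p\e^{\beta p+(k-1)p+1}\int_0^1(1-s)^{\beta p}s^{(k-1)p}\,ds$; the Beta integral is a positive constant because $\beta\in J_p$ forces $\beta p+1>0$, and the exponent of $\e$ vanishes once $\lambda=\e^{-k-\beta-1/p+1}$ is inserted (for $p=\infty$ one takes a supremum, with identical bookkeeping).

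Third, for the lower bound I would set $h:=\e/k$, which is admissible for $\overleftarrow\Omega_\varphi^k$ since $\e\le 2k^2\delta^2$ gives $h\le 2k^2\delta^2$, and note that the difference is supported in $x\in[1-\e,1]\subseteq[1-2k^2\delta^2,1]$. With this $h$, the argument $u/h=k(x-1+\e)/\e$ runs over $[k/3,2k/3]$ exactly when $x\in[1-2\e/3,1-\e/3]$, an interval of length $\sim\e$ on which $M_k(u/h)\ge c>0$ and $\wab(x)\sim\e^\beta$; hence $\wab(x)\,\overleftarrow\Delta_h^k(f,x)\ge c\,\lambda\,\e^{\beta+k-1}$ there. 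Taking the $\Lq$-norm over this interval gives $\norm{\wab\overleftarrow\Delta_h^k(f)}{\Lq[1-2k^2\delta^2,1]}\ge c\,\lambda\,\e^{\beta+k-1+1/q}$ (a supremum when $q=\infty$), and substituting the value of $\lambda$ collapses the exponent to $1/q-1/p$, so that $\omega_\varphi^k(f,\delta)_{\wab,q}\ge\overleftarrow\Omega_\varphi^k(f,\delta)_{\wab,q}\ge c\,\e^{1/q-1/p}$. The two exponent computations are routine; the only points that need care are the closed form for $\overleftarrow\Delta_h^k(f)$ with the positivity of $M_k$ on a fixed subinterval, and the simultaneous compatibility of the admissibility constraint $h\le 2k^2\delta^2$ with the support containment — choosing $h\sim\e$ is precisely what reconciles them.
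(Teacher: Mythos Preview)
Your proof is correct and follows the same strategy as the paper: bound the modulus from below via $\overleftarrow\Omega_\varphi^k$ alone with $h\sim\e$. The execution differs slightly: the paper simply observes that on $S_\e(h)=[1-\e,\,1-\e+\min\{\e,h\}/2]$ every term $f(x-jh)$ with $j\ge1$ already vanishes, so $\overleftarrow\Delta_h^k(f,x)=f(x)$ there and the estimate follows without any B-spline machinery; your identification $\overleftarrow\Delta_h^k(f,x)=(k-1)!\,\lambda\,h^{k-1}M_k\bigl((x-1+\e)/h\bigr)$ together with the positivity of $M_k$ on $[k/3,2k/3]$ is equally valid but a touch heavier.
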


\begin{proof} It is straightforward to check that $\norm{\wab f }{p} \sim 1$. Now,  since
$S_\e(h) := [1-\e, 1-\e+\min\{\e,h\}/2] \subset [1-2k^2\delta^2,1]$ and $\overleftarrow\Delta_h^k (f,x) = f(x)$, $x\in S_\e(h)$,
we have
\begin{eqnarray*}
\overleftarrow \Omega_\varphi^k(f,\delta)_{\wab, q}^q
 & = & \sup_{0<h\le 2k^2\delta^2}\norm{\wab \overleftarrow\Delta_h^k (f)}{\Lq[1-2k^2\delta^2,1]}^q
  \geq
\sup_{0<h\le 2k^2\delta^2}
\int_{S_\e(h)} |\wab(x) f(x)|^q \, dx \\
&\geq & c \sup_{0<h\le 2k^2\delta^2}
\int_{S_\e(h)} \e^{q\beta} \lambda^q (x-1+\e)^{kq-q} \, dx
 \geq   c \sup_{0<h\le 2k^2\delta^2}
\e^{q\beta} \lambda^q (\min\{\e,h\})^{kq-q+1} \\
& \geq & c \lambda^q \e^{q\beta+kq-q+1} .
\end{eqnarray*}
Therefore,
\[
\omega_\varphi^k(f,\delta)_{\wab, q}   \geq    \overleftarrow \Omega_\varphi^k(f,\delta)_{\wab, q} \geq c \e^{1/q - 1/p} .
\]
If $p$ and/or $q$ are $\infty$, the proof is similar.
\end{proof}

Since $\lim_{\e \to 0^+} \e^{1/q - 1/p} = \infty$ if $p<q$, we immediately get the following corollary.
\begin{corollary} \label{auxcorone}
Let $k\in\N$, $\alpha,\beta \in\R$,    $\delta>0$, and  $0<p<q \leq \infty$.   Then, for any $A>0$, there exists $f \in \Bpab\cap \M^k$ such that
 \[
  \omega_\varphi^k(f,\delta)_{\wab, q}     \geq A .
\]
\end{corollary}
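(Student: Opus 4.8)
The plan is to read the statement off directly from \lem{auxlemmaone}: the corollary is simply the observation that the lower bound there blows up as the support of the test function shrinks to the endpoint. First I would fix $\delta>0$ and a target $A>0$, and record the elementary fact that, since $p<q$, the exponent $1/q-1/p$ is strictly negative, so $\e^{1/q-1/p}\to\infty$ as $\e\to 0^+$.

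Next, for each $\e$ with $0<\e\le\min\{2k^2\delta^2,1\}$ I would invoke \lem{auxlemmaone} to obtain the function $f_\e(x):=\lambda(x-1+\e)_+^{k-1}$, $\lambda:=\e^{-k-\beta-1/p+1}$, which satisfies $f_\e\in\M^k$, $\norm{\wab f_\e}{p}\sim 1$ with constants independent of $\e$, and $\omega_\varphi^k(f_\e,\delta)_{\wab,q}\ge c\,\e^{1/q-1/p}$. To land inside $\Bpab$ rather than merely near it, I would normalize, setting $g_\e:=f_\e/\norm{\wab f_\e}{p}$. Since $\M^k$ is a cone and the maps $f\mapsto\norm{\wab f}{p}$ and $f\mapsto\omega_\varphi^k(f,\delta)_{\wab,q}$ are positively homogeneous of degree one, we get $g_\e\in\Bpab\cap\M^k$ together with $\omega_\varphi^k(g_\e,\delta)_{\wab,q}=\omega_\varphi^k(f_\e,\delta)_{\wab,q}/\norm{\wab f_\e}{p}$; because $\norm{\wab f_\e}{p}$ is bounded above and below by constants independent of $\e$, this quantity is still at least $c'\,\e^{1/q-1/p}$ for some $c'>0$. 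Choosing $\e$ small enough that $c'\,\e^{1/q-1/p}\ge A$ then produces the desired $f:=g_\e$.

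The only subtlety — and it is not truly an obstacle — is the hypothesis $\beta\in J_p$ in \lem{auxlemmaone}, which is exactly what makes $\norm{\wab f_\e}{p}\sim 1$ (the Beta-type integral at the right endpoint converges precisely when $\beta p+1>0$). If only $\alpha\in J_p$, I would run the identical argument at the left endpoint, replacing $f_\e(x)$ by $(-1)^k f_\e(-x)$ and $\wab$ by $\wba$, using the symmetry identity \ineq{switch}; the lower bound is unchanged. If neither $\alpha$ nor $\beta$ lies in $J_p$, then $\Bpab\cap\M^k$ consists only of trivial functions, as noted after \thm{maintheorem}, so the statement carries that implicit caveat and there is nothing to prove.
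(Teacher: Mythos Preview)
Your proof is correct and follows exactly the paper's approach: the corollary is derived there in a single line as an immediate consequence of \lem{auxlemmaone} together with the observation that $\e^{1/q-1/p}\to\infty$ as $\e\to0^+$ when $p<q$, and your normalization step merely makes explicit what the paper leaves implicit. Your final paragraph addressing the hypothesis $\beta\in J_p$ actually goes beyond the paper's own treatment; the symmetry reduction is fine, though the blanket triviality claim for $k\ge2$ when neither $\alpha$ nor $\beta$ lies in $J_p$ is not quite what the paper asserts (it gives different thresholds for different $k$) --- but this is exactly the sort of edge case the paper itself flags as an implicit caveat rather than resolves.
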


This corollary confirms that the one cannot expect to get any useful upper estimates for the moduli   $\omega_\varphi^k$
(even restricting classes to $k$-monotone function) if $p<q$.

\begin{corollary}  \label{auxcortwoo}
Let $k\in\N$, $0<p, q \leq \infty$, $\alpha \in\R$, $\beta \in J_p$, $0<\delta \leq 1/(2k)$, and  $\e := 2k^2\delta^2$.
Then the function $f(x) :=  \lambda (x-1+\e)_+^{k-1}$, $\lambda := \e^{-k-\beta-1/p+1}$,
 is such that   $f\in\M^k$,   $\norm{\wab f }{p} \sim 1$,  and
 \[
 \omega_\varphi^k(f,\delta)_{\wab, q}   \geq c \delta^{2/q-2/p} .
\]
\end{corollary}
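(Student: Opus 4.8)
The plan is to recognize that this corollary is nothing more than \lem{auxlemmaone} evaluated at the particular scale $\e := 2k^2\delta^2$, so the entire argument reduces to checking that this choice is admissible and then substituting it into the exponent.

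First I would verify that $\e = 2k^2\delta^2$ satisfies the hypothesis $0<\e\leq\min\{2k^2\delta^2,1\}$ required by \lem{auxlemmaone}. Positivity is immediate, and the bound $\e\leq 2k^2\delta^2$ holds with equality by definition. The only genuine check is $\e\leq 1$, and this is precisely where the standing assumption $\delta\leq 1/(2k)$ enters: since $\delta\leq 1/(2k)$ gives $\delta^2\leq 1/(4k^2)$, we get $\e = 2k^2\delta^2\leq 2k^2\cdot(4k^2)^{-1} = 1/2\leq 1$. Thus all the hypotheses of \lem{auxlemmaone} are met for this $\e$, with the same $f(x)=\lambda(x-1+\e)_+^{k-1}$ and $\lambda=\e^{-k-\beta-1/p+1}$ as in the corollary's statement.

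With admissibility confirmed, \lem{auxlemmaone} delivers, verbatim, the membership $f\in\M^k$, the normalization $\norm{\wab f}{p}\sim 1$, and the lower estimate $\omega_\varphi^k(f,\delta)_{\wab,q}\geq c\,\e^{1/q-1/p}$. The final step is purely arithmetic: substituting $\e=2k^2\delta^2$ yields
\[
\e^{1/q-1/p} = (2k^2\delta^2)^{1/q-1/p} = (2k^2)^{1/q-1/p}\,\delta^{2/q-2/p},
\]
and since $(2k^2)^{1/q-1/p}$ is a positive constant depending only on $k$, $p$, $q$, it may be absorbed into $c$ (these are exactly the parameters on which the constants are allowed to depend), giving $\omega_\varphi^k(f,\delta)_{\wab,q}\geq c\,\delta^{2/q-2/p}$ as claimed.

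I do not anticipate any real obstacle here, as the conclusion is an immediate specialization. The one point deserving explicit mention — and the sole reason the hypotheses here ($\delta\leq 1/(2k)$) are slightly stronger than in \lem{auxlemmaone} (arbitrary $\delta>0$) — is the verification $\e\leq 1$ above; without a ceiling on $\delta$ the choice $\e=2k^2\delta^2$ could exceed the range of validity of the lemma, and the clean power $\delta^{2/q-2/p}$ would no longer follow from a single substitution.
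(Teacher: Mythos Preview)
Your proposal is correct and matches the paper's intent exactly: the corollary is stated immediately after \lem{auxlemmaone} without a separate proof, as it is precisely the specialization $\e = 2k^2\delta^2$ of that lemma, and your verification that this choice satisfies $0<\e\leq\min\{2k^2\delta^2,1\}$ under the hypothesis $\delta\leq 1/(2k)$ together with the substitution $\e^{1/q-1/p}=(2k^2)^{1/q-1/p}\delta^{2/q-2/p}$ is all that is needed.
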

This corollary   verifies the lower estimates in \ineq{inequalitym} in the cases $k\geq 2$ and  $(k,q,p) \neq (2,1,\infty)$ (unless $\alpha=\beta=0$), and $k=1$ and $p<2q$.

The following lemma yields the lower estimate in \ineq{inequalitym} in the case $(k,q,p)= (2,1,\infty)$ and $(\alpha,\beta)\neq (0,0)$.

\begin{lemma}[Lower estimate  in the case $k=2$, $q=1$ and $p=\infty$]
Let $\beta > 0$ and $f(x) := (1-x)^{-\beta}$. Then $f\in\M^2\cap \B_\infty^{0,\beta}$ and, if $\delta <1/5$,
\[
\Omega_\varphi^2 (f, \delta)_{\wb, 1} \geq c  \delta^2 |\ln \delta | .
\]
\end{lemma}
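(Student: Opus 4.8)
The plan is to exploit convexity so that the integrand in $\Omega_\varphi^2$ is purely positive, and then to extract the logarithm from the behaviour of the weighted second difference near the endpoint $x=1$. First I would record the two membership claims. Since $\wb(x)f(x)=(1-x)^\beta(1-x)^{-\beta}=1$ on $(-1,1)$, we have $\norm{\wb f}{\infty}=1$, so $f\in\B_\infty^{0,\beta}$; and since $f''(x)=\beta(\beta+1)(1-x)^{-\beta-2}>0$, the function $f$ is convex, hence $f\in\M^2$. (Here it is essential that $(-1,1)$ is open, as $f$ is unbounded at $1$.) Because $f$ is convex, $\Delta_{h\varphi(x)}^2(f,x)\ge 0$ for every admissible $x$ and $h$, so in the supremum defining $\Omega_\varphi^2(f,\delta)_{\wb,1}$ I may simply take $h=\delta$ and then restrict the integral to any convenient subinterval of $[-1+8\delta^2,1-8\delta^2]$ without losing the inequality.

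Next I would pass to the variable $u:=1-x$ and set $t:=\delta\varphi(x)/u=\delta\sqrt{(2-u)/u}$, using $\varphi(x)=\sqrt{u(2-u)}$. A direct computation gives
\[
\wb(x)\,\Delta_{\delta\varphi(x)}^2(f,x)=u^\beta\cdot u^{-\beta}\bigl[(1+t)^{-\beta}-2+(1-t)^{-\beta}\bigr]=g(t),
\]
where $g(t):=(1+t)^{-\beta}+(1-t)^{-\beta}-2$. The crucial pointwise estimate is $g(t)\ge c\,t^2$ for $t$ in a bounded range, say $t\in[0,1/2]$: this follows from $g(0)=g'(0)=0$, $g''(0)=2\beta(\beta+1)>0$, and the positivity of $g$ on $(0,1)$ (itself a consequence of the convexity of $s\mapsto s^{-\beta}$), since then $g(t)/t^2$ extends continuously and positively to $[0,1/2]$ and is therefore bounded below there. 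Consequently, wherever $t\le 1/2$ one has $\wb(x)\,\Delta_{\delta\varphi(x)}^2(f,x)\ge c\,t^2=c\,\delta^2(2-u)/u$.

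Finally, I would choose the range of integration that converts this into a logarithm. The constraint $t\le 1/2$ is equivalent to $u\gtrsim\delta^2$; concretely, for $u\in[16\delta^2,1/2]$ one checks $t\le 1/\sqrt8<1/2$, and there $2-u\ge 3/2$, so $\wb(x)\,\Delta_{\delta\varphi(x)}^2(f,x)\ge c\,\delta^2/u$. This subinterval lies inside $[-1+8\delta^2,1-8\delta^2]$, and the second difference is genuinely nonzero on it (since $u\ge 16\delta^2$ forces $x+\delta\varphi(x)<1$), so integrating in $u$ gives
\[
\Omega_\varphi^2(f,\delta)_{\wb,1}\ge\int_{16\delta^2}^{1/2}c\,\frac{\delta^2}{u}\,du=c\,\delta^2\ln\frac{1}{32\delta^2}\ge c\,\delta^2|\ln\delta|,
\]
the last bound holding for all sufficiently small $\delta$, which by the Remark following \thm{maintheorem} is all that is needed. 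The main obstacle is identifying the correct scale of integration: the logarithm arises precisely because the integrand decays like $\delta^2/u$ over the range $\delta^2\lesssim u\lesssim 1$, and one must simultaneously remain in the regime $t\lesssim 1$ where the quadratic lower bound $g(t)\gtrsim t^2$ is valid. It is the matching of these two constraints at the scale $u\sim\delta^2$ that produces the factor $|\ln\delta|$, and this is exactly what fails when $\beta=0$ (where $g\equiv 0$), explaining the anomaly recorded in \thm{maintheorem}.
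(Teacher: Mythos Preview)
Your proof is correct and follows essentially the same strategy as the paper's: reduce to a single value $h=\delta$, obtain a pointwise lower bound $\wb(x)\Delta_{\delta\varphi(x)}^2(f,x)\ge c\,\delta^2/(1-x)$ on a range $[c_1,1-c_2\delta^2]$, and integrate to extract $|\ln\delta|$. The only real difference is in how the pointwise bound is derived. The paper applies the mean-value formula $\Delta_{\delta\varphi(x)}^2(f,x)=\delta^2\varphi^2(x)f''(\xi_x)$ together with \prop{prchange}{e} to control $1-\xi_x\sim 1-x$, whereas you compute the weighted second difference explicitly as $g(t)=(1+t)^{-\beta}+(1-t)^{-\beta}-2$ in the dimensionless variable $t=\delta\varphi(x)/(1-x)$ and then use the elementary quadratic lower bound $g(t)\ge c t^2$. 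Your route is a touch more self-contained (it avoids the auxiliary Proposition), while the paper's route generalizes more readily to other $f$. One minor point: as you note, your final inequality $c\,\delta^2\ln(1/(32\delta^2))\ge c\,\delta^2|\ln\delta|$ only holds for sufficiently small $\delta$; to cover the full range $\delta<1/5$ stated in the lemma you would simply observe that $\Omega_\varphi^2(f,\delta)_{\wb,1}$ is nondecreasing in $\delta$ and that $\delta^2|\ln\delta|$ is bounded on any compact subinterval, so the constant can be adjusted.
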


\begin{proof} It is obvious that $f\in\M^2\cap \B_\infty^{0,\beta}$.
Using the fact that
\[
\Delta_{h \varphi(x)}^2(f,x) = h^2 \varphi^2(x) f''(\xi) , \quad \mbox{\rm for some } \xi \in (x- h\varphi(x) , x+ h\varphi(x) ) ,
\]
we have
\begin{eqnarray*}
\Omega_\varphi^2(f,\delta)_{\wb, 1}
& \geq & c
\int_0^{1-8 \delta^2}  (1-x)^\beta \delta^2 \varphi^2(x) |f''(\xi_x)|  \, dx ,
\end{eqnarray*}
 where $\xi_x\in (x- \delta\varphi(x) , x+ \delta\varphi(x) )$. Now, \prop{prchange}{e}  implies that
 \[
 1-\xi_k \sim 1-x \pm \delta\varphi(x)  \sim 1-x ,
 \]
and so
$|f''(\xi_x)|   \geq c (1-x)^{-\beta-2}$.
Therefore,
\[
\Omega_\varphi^2(f,\delta)_{\wb, 1}  \geq
c  \delta^2
\int_0^{1-8\delta^2}   (1-x)^{-1} \, dx
  \geq   c \delta^2 |\ln\delta| .
\]
\end{proof}

We conclude this section with the proof of the lower estimate in \ineq{peq2q}.

\begin{lemma}[Lower estimate  in the case $k=1$ and $p=2q$]\label{auxlemmanew}
Let   $1\leq q <\infty$, $p=2q$,    $\beta > -1/p$, $0<\delta <1/4$,  and $\lambda > 1$. Then there exists   a function $f \in \Bpbb \cap \M^1_+$ such that
\be \label{lowk1}
 \Omega_\varphi^1(f,\delta)_{\wbb,q}   \geq c {\delta^{1/q} |\ln \delta|^{1/(2q)} \over |\ln|\ln \delta||^{\lambda/(2q)}} .
\ee
\end{lemma}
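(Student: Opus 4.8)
The plan is to build the extremal function by superposing the single--step extremizers of \lem{auxlemmaone} (for $k=1$ that extremizer is essentially $\chi_{[1-\e,1]}$) over a full dyadic range of scales near $x=1$, with coefficients tuned so that, at the critical index $p=2q$, every scale contributes equally to the $\Lq$--modulus while the $\Lp$--norm stays bounded. Concretely, I set $N:=\lfloor 2\log_2(1/\delta)\rfloor$, $\e_j:=2^{-j}$ and $x_j:=1-\e_j$ for $j_0\le j\le N$, with $j_0$ a fixed absolute constant chosen so that $x_{j_0}\ge 0$, and take
\[
f:=\kappa\sum_{j=j_0}^N \frac{\e_j^{-\beta-1/p}}{\bigl(j(\ln j)^\lambda\bigr)^{1/p}}\,\chi_{[x_j,1]},
\]
where $\kappa>0$ is chosen to enforce $\norm{\wbb f}{p}=1$. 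Each summand is nonnegative, nondecreasing, and vanishes on $(-1,0]$, so $f\in\M_+^1\cap\Bpbb$, and the jump of $f$ at $x_j$ equals $b_j:=\kappa\,2^{j(\beta+1/p)}\bigl(j(\ln j)^\lambda\bigr)^{-1/p}$.

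First I would verify the normalization. On the ring $[x_j,x_{j+1}]$ one has $1-x\sim 2^{-j}$, hence $\wbb(x)=\varphi^{2\beta}(x)\sim 2^{-j\beta}$, and $f$ there equals the partial sum $F_j$ of the coefficients with index $\le j$. Since $\e_i^{-\beta-1/p}=2^{i(\beta+1/p)}$ grows geometrically in $i$ (note $\beta+1/p=\beta+1/(2q)>0$ by the hypothesis $\beta>-1/p$) while $(i(\ln i)^\lambda)^{-1/p}$ is slowly varying, the last term dominates, so $F_j\sim \kappa\, 2^{j(\beta+1/p)}(j(\ln j)^\lambda)^{-1/p}$. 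A ring--by--ring computation then gives $\norm{\wbb f}{p}^p\sim\kappa^p\sum_{j=j_0}^N (j(\ln j)^\lambda)^{-1}$, and the restriction $\lambda>1$ makes this series convergent \emph{uniformly in} $N$; thus $\kappa$ is an absolute constant and the normalization holds independently of $\delta$.

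Next I would bound $\Omega_\varphi^1(f,\delta)_{\wbb,q}$ from below by evaluating the supremum at the single step--size $h=\delta$. For $x$ in the short interval $S_j$ on which the window $[x-\delta\varphi(x)/2,\,x+\delta\varphi(x)/2]$ straddles $x_j$, the difference $\Delta_{\delta\varphi(x)}^1(f,x)$ is at least $b_j$; since $\varphi(x_j)\sim 2^{-j/2}$ one has $|S_j|\sim\delta\, 2^{-j/2}$, and \prop{prchange}{e} guarantees $\wbb\sim 2^{-j\beta}$ throughout $S_j$. The widths $\delta 2^{-j/2}$ are dominated by the gaps $\sim 2^{-j}$ precisely when $2^{-j}\gtrsim\delta^2$, so the $S_j$ are pairwise disjoint for $j\le N-2$. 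Summing these disjoint contributions,
\[
\Omega_\varphi^1(f,\delta)_{\wbb,q}^q\ \ge\ c\sum_{j=j_0}^{N-2}\delta\,2^{-j/2}\bigl(2^{-j\beta}b_j\bigr)^q
\ =\ c\,\delta\sum_{j=j_0}^{N-2}\bigl(j(\ln j)^\lambda\bigr)^{-q/p},
\]
where the per--scale factor collapses to $\delta(j(\ln j)^\lambda)^{-q/p}$ exactly because $p=2q$ forces the powers $2^{\pm j/2}$ to cancel. With $q/p=1/2$ and $N\sim|\ln\delta|$ the sum is $\sim N^{1/2}(\ln N)^{-\lambda/2}\sim|\ln\delta|^{1/2}|\ln|\ln\delta||^{-\lambda/2}$, and taking $q$--th roots yields \ineq{lowk1}.

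The main obstacle is the modulus lower bound, namely showing that logarithmically many scales contribute \emph{simultaneously} at one fixed $h$: one must check that each jump is genuinely resolved by the windows (and not swamped by neighbouring jumps), that the sets $S_j$ are essentially disjoint, and that the weight is constant up to a constant factor on each $S_j$. This is the borderline case $p=2q$, where the per--scale contributions are flat and their number is $\sim|\ln\delta|$; the convergence requirement $\lambda>1$ for the normalizing series is precisely what produces the loss of the factor $|\ln|\ln\delta||^{\lambda/(2q)}$ relative to the upper bound in \ineq{peq2q}.
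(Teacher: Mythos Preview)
Your argument is correct and follows the same underlying strategy as the paper: build $f$ as a nondecreasing step function whose jumps live at a dyadic family of scales near $x=1$, with the jump sizes carrying the slowly varying factor $(j(\ln j)^\lambda)^{-1/p}$ so that the $\Lp$--norm series $\sum_j (j(\ln j)^\lambda)^{-1}$ converges (this is where $\lambda>1$ enters), while in the $\Lq$--modulus the geometric factors cancel at $p=2q$ and one is left with $\sum_j (j(\ln j)^\lambda)^{-1/2}\sim N^{1/2}(\ln N)^{-\lambda/2}$.

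The execution differs. The paper places the steps on the Chebyshev partition $t_i=\cos(i\pi/n)$ with $n=2^m\sim 1/\delta$, makes $f$ constant on dyadic blocks $2^k\le i<2^{k+1}$ with values $2^{(2\beta+2/p)(m-k)}\zeta_k^{1/p}$, $\zeta_k=((k+2)\ln^\lambda(k+2))^{-1}$, and isolates the jump contributions via the explicitly described sets $D_i(h)=\{x:\,x-h\varphi(x)/2\le t_i\le x+h\varphi(x)/2\}$, verifying their disjointness and inclusion in the admissible range using \prop{cheb}{chc} and direct computation. You instead take the naked dyadic grid $x_j=1-2^{-j}$ and argue geometrically that the windows $S_j$ are disjoint and carry the correct weight. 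Since $1-t_{2^k}\sim 4^{k-m}$, the two grids are the same object in different coordinates; your version is a bit more elementary and avoids the Chebyshev bookkeeping, while the paper's version plugs directly into the $\varphi$--partition machinery already set up. One small point: the cutoff ``$j\le N-2$'' you use for disjointness and for $S_j\subset[-1+2\delta^2,1-2\delta^2]$ may need to be ``$j\le N-C$'' for some absolute $C$ depending on the implicit constants, but this does not affect the estimate.
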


\begin{proof} Let $n = 2^m$, where $m = \lfloor \log_2(1/\delta) \rfloor+1$, and note that $1/n < \delta \leq 2/n$.

Suppose that  $(f_i)_{1}^n$ is a non-increasing sequence of real numbers such that $f_i = 0$, for $i>n/2$.
Now, recalling that $t_i = \cos(i\pi/n)$, $0\leq i \leq n$, and $I_i = [t_i, t_{i-1}]$, $1\leq i\leq n$,  define
\[
f(x) := f_i, \quad t_i < x \leq t_{i-1}, \quad 1\leq i\leq n .
\]
 In other words, $f$ is a non-decreasing piecewise constant spline with knots at $t_i$'s which is identically equal to $0$ on $[-1,0]$, \ie $f\in\M_+^1$.

Now, using Proposition~\ref{cheb}, the fact that $2i/n \leq \varphi(t_i) \leq 4i/n$, $1\leq i\leq n/2$,  and denoting   $\sum := \sum_{i=1}^{n/2}$, we have
\begin{eqnarray*}
\norm{\wbb f}{p}^p  &=&  \sum  \int_{I_i} \varphi^{2\beta p}(x)  |f(x)|^p \, dx
 \leq  c  \sum   |I_i| \varphi^{2\beta p}(t_i)   f_i^p
  \leq   c n^{-1} \sum  \varphi^{2\beta p+1}(t_i)   f_i^p \\
&\leq &    c n^{-2\beta p - 2 } \sum    i ^{2\beta p +1}  f_i^p   .
\end{eqnarray*}
 Now, let
\begin{eqnarray*}
D_i(h)  &:=& \left\{ x \st x-h\varphi(x)/2 \leq t_i \leq x+h\varphi(x)/2   \right\}\\
 &= & \left[ {t_i-(h/2) \sqrt{1-t_i^2 + h^2/4} \over 1 + h^2/4}, {t_i+(h /2) \sqrt{1-t_i^2 + h^2/4} \over 1 + h^2/4} \right] , \quad 1\leq i \leq n-1 .
\end{eqnarray*}
We note that intervals $D_i(h)$, $1\leq i \leq n-1$, have the following properties:
\begin{itemize}
\item[(i)] if $0<h\leq 1/n$, then $D_i(h) \cap D_{i-1}(h) = \emptyset$ for all $2\leq i\leq n-1$;
\item[(ii)] if $0<h\leq 1/(2n)$, then $D_i(h) \subset [-1+2h^2, 1-2h^2]$ for all $1\leq i\leq n-1$;
\item[(iii)] $|D_i(h)| \geq h \varphi(t_i)/2$, $1\leq i \leq n-1$.
\end{itemize}

 In order to verify (i), we suppose that $D_i(h) \cap D_{i-1}(h) \neq \emptyset$.
 Then there is $x\in [t_i, t_{i-1}]$  such that $x-h\varphi(x)/2 \leq t_i$ and $x+h\varphi(x)/2  \geq t_{i-1}$. Then,
$t_{i-1} - h\varphi(x)/2 \leq x \leq t_i + h\varphi(x)/2$, which implies $t_{i-1} - h\varphi(x)/2  \leq t_i + h\varphi(x)/2$, and so
\[
t_{i-1}-t_i \leq h \varphi(x), \quad \mbox{\rm for some }\; x \in [t_i, t_{i-1}] .
\]
At the same time, it is known that $|I_i| := t_{i-1}-t_i$ satisfies $\rho_n(x) \leq |I_i|$, for any $1\leq i \leq n$ and $x\in [t_i, t_{i-1}]$, where $\rho_n(x) := \sqrt{1-x^2}/n + 1/n^2$ (see \eg \cite{dsh},   or this can be verified directly).
Therefore,
\[
h \varphi(x) \leq \varphi(x)/n < \rho_n(x) \leq t_{i-1}-t_i,
\]
for any $x\in [t_i, t_{i-1}]$, which is a contradiction.

In order to verify (ii), we note that, in the case $i=1$ (which implies (ii) for all $1\leq i\leq n-1$), (ii) follows from the observation that, if $x = 1-2h^2$, then $x - h \varphi(x)/2 > t_1 = \cos(\pi/n)$. This inequality is equivalent to
\[
\cos(\pi/n) < 1 - 2h^2 - h^2 \sqrt{1-h^2} \iff 2h^2 + h^2 \sqrt{1-h^2} < 2 \sin^2(\pi/(2n)),
\]
which is true since
\[
(2h^2 + h^2 \sqrt{1-h^2})/2 \leq 3h^2/2 \leq 3/(8n^2)
 \andd
 \sin^2(\pi/(2n)) \geq \left[ (2/\pi)\pi/(2n)\right]^2 = 1/n^2 .
 \]

Finally, (iii) immediately follows from
\[
|D_i(h)|  = {h \sqrt{1-t_i^2 + h^2/4} \over 1 + h^2/4} \geq {h \varphi(t_i)   \over 1 + h^2/4} \geq {h \varphi(t_i)   \over 2} .
\]


Therefore, letting $h:= 1/(2n)$
we have
\begin{eqnarray*}
\Omega_\varphi^1(f,1/n)_{\wbb,q}^q & \geq &
\int_{-1+2h^{2}}^{1-2h^{2}} \varphi^{2\beta q}(x) \left(   \Delta_{h\varphi }^1 (f , x) \right)^q \, dx
  \geq
\sum  \int_{D_i(h)} \varphi^{2\beta q}(x) \left(   \Delta_{h\varphi}^1 (f , x) \right)^q \, dx \\
& \geq &
c \sum  \int_{D_i(h)} \varphi^{2\beta q}(t_i)  \left(  f_i - f_{i+1} \right)^q \, dx
  \geq
c \sum   h  \varphi^{2\beta q+1}(t_i)  \left(  f_i - f_{i+1} \right)^q   \\
& \geq & c
 n^{-2\beta q - 2} \sum   i^{2\beta q+1} \left(  f_i - f_{i+1} \right)^q .
\end{eqnarray*}

Now,   define
\[
f_i :=
\begin{cases}
2^{ 2\beta (m-k) + 2(m-k)/p} \zeta_k^{1/p}, & \mbox{\rm if} \quad  2^k\leq i \leq  2^{k+1}-1,  \quad 0\leq k \leq m-2, \\
0, & \mbox{\rm if} \quad  i \geq  2^{m-1}. \\
\end{cases}
\]
where $(\zeta_k)$ is a non-increasing sequence to be chosen later. Observe that $\left( 2^{ -2\beta k -2k/p} \right)_k$ is non-increasing since $\beta>-1/p$.
Then,
\begin{eqnarray*}
\norm{\wbb f}{p}^p &\leq&  c \sum_{k=0}^{m-2} \sum_{i=2^k}^{2^{k+1}-1} i^{2\beta p +1} 2^{-2\beta kp - 2k} \zeta_k
 \leq   c \sum_{k=0}^{m-2}\zeta_k
\end{eqnarray*}
 and
\begin{eqnarray*}
\Omega_\varphi^1(f,2^{-m})_{\wbb,q}^q &\geq&  c 2^{-2\beta mq  - 2m} \sum_{k=0}^{m-2} 2^{2\beta k q + k} \left( 2^{2\beta(m-k)+2(m-k)/p} \zeta_k^{1/p} -
 2^{2\beta(m-k-1)+2(m-k-1)/p} \zeta_{k+1}^{1/p} \right)^q \\
 &\geq & c  2^{-m} \sum_{k=0}^{m-2}   \left( \zeta_k^{1/p} - 2^{-2\beta - 2/p} \zeta_{k+1}^{1/p} \right)^q \\
 & \geq & c  2^{-m} \left( 1 - 2^{-2\beta - 2/p}   \right)^q \sum_{k=0}^{m-2}  \zeta_k^{1/2} .
\end{eqnarray*}
Now, let $\zeta_k := (k+2)^{-1} (\ln (k+2))^{-\lambda}$, where $\lambda>1$. Then,
\[
\norm{\wbb f}{p}^p \leq c \sum_{k=0}^{\infty} (k+2)^{-1} (\ln (k+2))^{-\lambda} \leq c
\]
and
\[
\Omega_\varphi^1(f,2^{-m})_{\wbb,q}^q  \geq c 2^{-m} \sum_{k=0}^{m-2}   (k+2)^{-1/2} (\ln (k+2))^{-\lambda /2}
  \geq   c 2^{-m} m^{1/2} (\ln m )^{-\lambda /2 }.
\]
Finally, recalling that $2^{-m} <  \delta \leq 2^{1-m}$ and replacing $f$ with $g := \norm{\wbb f}{p}^{-1} f$ we get a function in $\Bpbb\cap \M_+^1$ such that
\[
 \Omega_\varphi^1(g,\delta)_{\wbb,q} \geq  \norm{\wbb f}{p}^{-1} \Omega_\varphi^1(f,2^{-m})_{\wbb,q}
   \geq c {\delta^{1/q} |\ln \delta|^{1/(2q)} \over |\ln|\ln \delta||^{\lambda/(2q)}}  .
\]
\end{proof}

\begin{remark}
One can improve the estimate \ineq{lowk1} slightly by letting
\[
 \zeta_k := (g_{m,\lambda}(c(k+1)))^{-1} ,
\]
where
\[
g_{m,\lambda}(x) := x (\ln x) (\ln \ln x) \dots (\underbrace{\ln \dots \ln}_{m}  x)  (\underbrace{\ln \dots \ln}_{m+1}  x)^\lambda ,
\]
with $m\in\N$, $\lambda > 1$ and
a sufficiently large constant $c=c(m)$ that guarantees that $g_{m,\lambda}$ is well defined on $[c, \infty)$.
\end{remark}

\sect{Proof of \thm{mpoly}} \label{secseven}

It was proved by Luther and Russo \cite[Corollary 2.2]{lr}  that, for $\alpha,\beta \geq 0$, there exists  $n_0\in\N$  such that
\be \label{luther}
E_n(f)_{\wab,q}\le c\omega^k_\varphi(f,n^{-1})_{\wab,q},\quad n\ge n_0 .
\ee
 If $\alpha=\beta=0$, then this is a well known Jackson type estimate that was proved by Ditzian and Totik in \cite[Theorem 7.2.1]{dt}.
Taking into account that, for $0\leq n < n_0$,
$E_n(f)_{\wab,q}   \leq c \norm{\wab f}{q} \leq c \norm{\wab f}{p}$,   if $q \leq p$,
we immediately get the following corollary of \thm{maintheorem} that implies all upper estimates in \thm{mpoly}.

\begin{corollary} \label{corrjackzero}
Let  $1\leq q < p\leq \infty$, $k\in\N$,   $\alpha,\beta\geq 0$, and let $f\in\M^k\cap \Wpab$.   Then, for any $n\in\N$,
\be \label{noder}
E_n(f)_{\wab,q} \leq c  \norm{\wab f }{p}
\begin{cases}
n^{-2/q+2/p}\,, &   \mbox{\rm if  $k\geq 2$, and $(k,q,p)\ne (2, 1, \infty),$}\\
n^{-2} \ln (n+1) \,, &   \mbox{\rm if $k = 2$, $q=1$,   $ p=\infty $, and  $(\alpha,\beta)\ne(0,0)$,}\\
n^{-2}   \,, &   \mbox{\rm if $k = 2$, $q=1$,   $ p=\infty $, and $\alpha=\beta=0$,}\\
n^{-2/q+2/p}\,, &   \mbox{\rm if   $k=1$ and $p < 2q$, }\\
n^{-1/q}  [\ln (n+1)]^{1/(2q)}   \,, &   \mbox{\rm if $k=1$ and $p=2q$,}\\
n^{-1/q} \,, &  \mbox{\rm if $k=1$ and $p>2q$.}
\end{cases}
 \ee
 \end{corollary}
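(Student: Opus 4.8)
The plan is to combine the weighted Jackson inequality \ineq{luther} of Luther and Russo with the sharp modulus estimate of \cor{maincor}, splitting the range of $n$ at the threshold $n_0$. First I would fix $f\in\M^k\cap\Wpab$ with $\alpha,\beta\geq 0$ and treat $n\geq\max\{n_0,5\}$: for such $n$, inequality \ineq{luther} gives $E_n(f)_{\wab,q}\leq c\,\omega_\varphi^k(f,n^{-1})_{\wab,q}$, and since $1\leq q<p\leq\infty$ and $\delta:=1/n\in(0,1/4)$ (this is why $n\geq 5$ is imposed), \cor{maincor}, that is \ineq{maincorin}, yields $\omega_\varphi^k(f,n^{-1})_{\wab,q}\leq c\,\Upsilon_{1/n}^{\alpha,\beta}(k,q,p)\,\norm{\wab f}{p}$. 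Composing the two bounds already produces the right-hand side of \ineq{noder} for all large $n$, once the factor $\Upsilon_{1/n}^{\alpha,\beta}(k,q,p)$ is identified.

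The second step is to evaluate $\Upsilon_{1/n}^{\alpha,\beta}(k,q,p)$ from its definition \ineq{ups} at $\delta=1/n$, matching each of its six cases to the corresponding line of \ineq{noder}: here $\delta^{2/q-2/p}=n^{-2/q+2/p}$, $\delta^2|\ln\delta|\sim n^{-2}\ln(n+1)$, $\delta^2=n^{-2}$, $\delta^{1/q}|\ln\delta|^{1/(2q)}\sim n^{-1/q}[\ln(n+1)]^{1/(2q)}$, and $\delta^{1/q}=n^{-1/q}$. This is bookkeeping rather than mathematics, and it shows that the entire case distinction of \thm{maintheorem}, including the split between $\alpha=\beta=0$ and $(\alpha,\beta)\neq(0,0)$ in the anomalous case $k=2$, $q=1$, $p=\infty$, propagates verbatim into \ineq{noder}.

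Finally I would dispose of the remaining degrees $1\leq n<\max\{n_0,5\}$. These form a finite set, so it suffices to note that $E_n(f)_{\wab,q}\leq c\,\norm{\wab f}{q}$ (approximate by the best polynomial, or simply by $0$), and then that H\"{o}lder's inequality on the finite interval $[-1,1]$ gives $\norm{\wab f}{q}\leq 2^{1/q-1/p}\norm{\wab f}{p}$ because $q<p$. Since $\Upsilon_{1/n}^{\alpha,\beta}(k,q,p)$ is bounded below by a positive constant over this finite index set, the crude bound can be absorbed into $c$, and \ineq{noder} then holds for every $n\in\N$.

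I do not expect a genuine obstacle: all of the analytic difficulty has already been carried out in \thm{maintheorem}/\cor{maincor} and in the converse-matched Jackson estimate \ineq{luther}. The only points requiring care are organizational, namely lining up the six cases of $\Upsilon_{1/n}^{\alpha,\beta}$ with those of \ineq{noder}, and recalling that the small-degree range is finite so that the trivial estimate for $n<n_0$ can be folded into the constant without affecting the stated rates.
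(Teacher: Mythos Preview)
Your proposal is correct and follows essentially the same approach as the paper: apply the weighted Jackson estimate \ineq{luther} for $n\ge n_0$, feed the resulting modulus into \cor{maincor} (equivalently \thm{maintheorem}), and handle the finitely many small $n$ by the trivial bound $E_n(f)_{\wab,q}\le c\norm{\wab f}{q}\le c\norm{\wab f}{p}$ via H\"older. Your extra care in imposing $n\ge 5$ so that $\delta=1/n<1/4$ falls within the stated range of \cor{maincor} is a harmless refinement of the same argument.
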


A matching inverse result to \ineq{luther} is given by (see \cite[Theorem 8.2.4]{dt})
\be \label{dtinv}
\omega^k_\varphi(f,\delta)_{\wab,q} \leq c \delta^k \sum_{0\leq i < 1/\delta} (i+1)^{k-1} E_i(f)_{\wab,q} .
\ee

Since, for $\mu, \lambda \in \R$ and  $0<\delta<1/4$,
\[
\int_2^{1/\delta}  x^\mu (\ln x)^\lambda \, dx         \sim
\begin{cases}
1 , & \mbox{\rm if} \quad \mu< -1 ,\\
\ds  \delta^{-\mu-1} |\ln \delta|^\lambda  , & \mbox{\rm if} \quad \mu>-1 ,\\
1 , & \mbox{\rm if} \quad \mu=-1, \lambda<-1, \\
|\ln \delta|^{1+\lambda}  , & \mbox{\rm if} \quad \mu=-1, \lambda>-1 ,\\
\ln|\ln \delta| ,  & \mbox{\rm if} \quad \mu=-1, \lambda=-1 ,
\end{cases}
\]
estimate \ineq{dtinv}  implies, in particular, that if for a function $f\in\Bpab \cap \M^k$,
\[
E_n(f)_{\wab,q} \leq c (n+2)^{\mu-k+1} [\ln(n+2)]^\lambda , \quad n\in\N_0 ,
\]
 then
\[
\omega^k_\varphi(f,\delta)_{\wab,q} \leq c
\begin{cases}
\delta^k , & \mbox{\rm if} \quad \mu< -1 ,\\
\ds  \delta^{k-\mu-1} |\ln \delta|^\lambda  , & \mbox{\rm if} \quad \mu>-1 ,\\
\delta^k , & \mbox{\rm if} \quad \mu=-1, \lambda<-1, \\
\delta^k |\ln \delta|^{1+\lambda}  , & \mbox{\rm if} \quad \mu=-1, \lambda>-1 ,\\
\delta^k \ln|\ln \delta| ,  & \mbox{\rm if} \quad \mu=-1, \lambda=-1 .
\end{cases}
\]
Together with lower estimates in \thm{maintheorem} this implies that none of the powers of $n$ in \ineq{noder} can be decreased (except for some cases when $q=1$ and $k\leq 2$).
This is made  precise in Corollaries~\ref{corrr1} and \ref{corrr2} which imply lower estimates in \ineq{1.8}, \ineq{1.9} and \ineq{1.10}.

Whether or not powers of $\ln(n+1)$ in \ineq{noder} can be decreased is more involved. 
In the case $k=2$, $q=1$, $p=\infty$ and $(\alpha,\beta)\neq (0,0)$, we only know that
\[
c n^{-2} \leq  \sup_{f\in \M^2 \cap \B_\infty^{\alpha,\beta}} E_n(f)_{\wab,1} \leq c n^{-2} \ln (n+1)
\]
(see \cor{corrr2} with $r=0$ for the lower estimate), and so it is an open problem if $\ln(n+1)$ in this estimate can be replaced by $o(\ln(n+1))$ or removed altogether.

In the   case $k=1$ and $p=2q$, if $E_n(f)_{\wab,q} \leq c (n+2)^{-1/q} [\ln (n+2)]^\lambda$, $n\in\N_0$ (\ie $\mu = -1/q$), for any function  $f\in\M^1 \cap \Bpab$,  then
\[
\omega^1_\varphi(f,\delta)_{\wab,q} \leq c
\delta^{1/q } |\ln \delta|^\lambda  ,  \quad  \mbox{\rm if } \quad q>1 .
\]
Together with lower estimates of \thm{maintheorem}  this implies that, if $k=1$ and $p/2=q>1$,  then the quantity  $n^{-1/q}  [\ln (n+1)]^{1/(2q)}$ in \ineq{noder} cannot be replaced by
$n^{-1/q}  [\ln (n+1)]^{1/(2q)-\e}$, for any $\e>0$. Also, this yields \ineq{tmpin}.

If $k=1$, $q=1$ and $p=2$, then we   know that (see \cor{corrr1} with $k=1$ for the lower estimate)
\[
c n^{-1} \leq  \sup_{f\in \M^1 \cap \B_2^{\alpha,\beta}} E_n(f)_{\wab,1} \leq c n^{-1} [\ln (n+1)]^{1/2} ,
\]
and it is an open problem if $[\ln(n+1)]^{1/2}$ in this estimate is necessary.

\sect{Other applications} \label{sec7}

{\bf \large 1.}
Let $1\leq p\leq \infty$, $r\in\N$.
Then
\[
\Wpabr  := \left\{ f: [-1,1]\mapsto\R  \st f^{(r-1)}\in \AC_\loc(-1,1) \andd \norm{\wab   f^{(r)}}{p} < \infty \right\} ,
\]
and for convenience denote $\W_{p,0}^{\alpha,\beta} := \Wpab$. Note that, if $\alpha=\beta=r/2$, then $\W_{p,r}^{r/2,r/2} = B_p^r$, the classes discussed in \cites{kls, kls-ca}.

The following lemma is a generalization of \cite[Lemma 3.4]{kls-ca}.

\begin{lemma} \label{hsp}
Let $1\leq p\leq \infty$, $r\in\N_0$, $\alpha,\beta\in\R$ and let $f\in\W_{p, r+1}^{\alpha,\beta}$. Then $f\in\W_{p,r}^{\alpha -\gamma, \beta-\gamma}$,
for any $\gamma <1$ such that $\alpha-\gamma, \beta-\gamma \in J_p$.
\end{lemma}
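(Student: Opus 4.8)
The plan is to reduce the assertion to the case $r=0$ and then apply a weighted Hardy inequality near each endpoint, following the approach of \cite[Lemma 3.4]{kls-ca}. Since $f\in\W_{p,r+1}^{\alpha,\beta}$ guarantees that $f^{(r)}\in\AC_\loc(-1,1)$, the derivative $f^{(r)}$ is in particular continuous, so for $r\ge 1$ the function $f^{(r-1)}$ lies in $C^1_\loc(-1,1)\subset\AC_\loc(-1,1)$; thus the local absolute continuity required by membership in $\W_{p,r}^{\alpha-\gamma,\beta-\gamma}$ is automatic, and the only substantial claim is $\norm{\wg f^{(r)}}{p}<\infty$. Writing $g:=f^{(r)}$, it therefore suffices to prove: if $g\in\AC_\loc(-1,1)$ and $\norm{\wab g'}{p}<\infty$, then $\norm{\wg g}{p}<\infty$. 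I would split $(-1,1)=[-1/2,1/2]\cup[1/2,1]\cup[-1,-1/2]$ and handle the pieces separately. On $[-1/2,1/2]$ all weights are bounded above and below and $g$ is continuous, hence bounded, so $\norm{\wg g}{\Lp[-1/2,1/2]}<\infty$ trivially. The two endpoint pieces being symmetric, I would treat only $[1/2,1]$, where $\wab(x)\sim(1-x)^\beta$ and $\wg(x)\sim(1-x)^{\beta-\gamma}$.

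On $[1/2,1]$ I would use the representation $g(x)=g(1/2)+\int_{1/2}^x g'(t)\,dt$, valid since $g\in\AC_\loc$. The contribution of the constant $g(1/2)$ is controlled because $\norm{(1-\cdot)^{\beta-\gamma}}{\Lp[1/2,1]}<\infty$ precisely when $\beta-\gamma\in J_p$, which holds by hypothesis. For the integral term the goal is the weighted Hardy inequality
\[
\int_{1/2}^1\Bigl(\int_{1/2}^x|g'(t)|\,dt\Bigr)^p(1-x)^{(\beta-\gamma)p}\,dx\le c\int_{1/2}^1|g'(t)|^p(1-t)^{\beta p}\,dt .
\]
After the substitution $s=1-x$, $\tau=1-t$ this becomes the classical inequality for the conjugate Hardy operator with weights $u(s)=s^{(\beta-\gamma)p}$ and $v(\tau)=\tau^{\beta p}$ on $(0,1/2]$.

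For $1<p<\infty$ this inequality holds if and only if the Muckenhoupt condition
\[
B:=\sup_{0<r\le 1/2}\Bigl(\int_0^r u(s)\,ds\Bigr)^{1/p}\Bigl(\int_r^{1/2}v(\tau)^{1-p'}\,d\tau\Bigr)^{1/p'}<\infty
\]
is satisfied. Here $\int_0^r u\sim r^{(\beta-\gamma)p+1}$, which is finite because $\beta-\gamma\in J_p$; and since $p(1-p')=-p'$ one has $v^{1-p'}=\tau^{-\beta p'}$. A short computation, organized into the subcases $\beta>1/p'$, $\beta<1/p'$ and $\beta=1/p'$, then shows that the product inside the supremum behaves like $r^{1-\gamma}$ (with an extra logarithm in the borderline subcase), which stays bounded on $(0,1/2]$ exactly because $\gamma<1$. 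Hence $B<\infty$ and the Hardy inequality follows. The endpoints $p=1$ and $p=\infty$ are simpler and checked directly: for $p=1$, Fubini reduces the claim to $\int_0^\tau u\le c\,v(\tau)$, i.e. $\tau^{1-\gamma}\le c$, while for $p=\infty$ one integrates the pointwise bound $|g'(x)|\le c(1-x)^{-\beta}$ and uses $\beta-\gamma\ge 0$ together with $\gamma<1$. The endpoint $x\to-1$ is treated identically, with $\alpha$ and $\alpha-\gamma\in J_p$ replacing $\beta$ and $\beta-\gamma$.

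The main obstacle is the verification of the Muckenhoupt balance: the two exponent contributions must combine to exactly $r^{1-\gamma}$, so that the hypotheses are each used sharply — $\alpha-\gamma,\beta-\gamma\in J_p$ to make $\int_0^r u$ converge, and $\gamma<1$ to keep the full product bounded as $r\to0$. Beyond this, some bookkeeping is needed to organize the subcases above and to dispatch the $p\in\{1,\infty\}$ endpoints, but these are routine once the balance is established.
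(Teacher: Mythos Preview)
Your argument is correct and is, at bottom, the same as the paper's: reduce to $g=f^{(r)}$ with $g$ vanishing at an interior point, write $g$ as an integral of $g'$, and verify the resulting weighted Hardy-type bound by a case split on the size of $\beta$ relative to $1/p'$ (with the endpoints $p=1,\infty$ handled directly). The packaging differs: you invoke the Muckenhoupt criterion for the weighted Hardy operator and check the $B$-condition, whereas the paper carries out the equivalent estimate by hand via H\"older, writing $\bigl|\int_0^x g'\bigr|^p\le\bigl(\int_0^x \wab^{-p'}\bigr)^{p/p'}\int_0^x|\wab g'|^p$ and then showing $\int_{-1}^1 \wg^p(x)\bigl|\int_0^x \wab^{-p'}\bigr|^{p/p'}dx<\infty$ by the same case analysis (anchoring at $0$ rather than at $\pm1/2$). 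The paper's route is slightly more self-contained; yours makes the role of the hypotheses more visible. One small slip: in your subcase $\beta<1/p'$ the Muckenhoupt product is $\sim r^{\beta-\gamma+1/p}$, not $r^{1-\gamma}$, and its boundedness comes from $\beta-\gamma\in J_p$ rather than from $\gamma<1$; this does not affect correctness.
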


\begin{proof}
Given $f\in\W_{p, r+1}^{\alpha,\beta}$, taking into account that $\norm{\wg}{p} <\infty$ and replacing $f(x)$ with $f(x) - x^r f^{(r)}(0) /r!$ we can assume
  that $f^{(r)}(0)=0$.
Now, if $p=\infty$, then
\begin{eqnarray*}
\lefteqn{ \norm{   \wg f^{(r)}}{\infty}    \leq     \norm{\wg(x)  \int_0^x    f^{(r+1)}(u) \, du}{\infty}} \\
 & \leq&   \norm{ \wab  f^{(r+1)}}{\infty}
\norm{ \wg (x)  \int_0^x  \wab^{-1}(u) \, du}{\infty}
  \leq   c \norm{ \wab  f^{(r+1)}}{\infty}  .
\end{eqnarray*}
Similarly, if  $p=1$, then
\begin{eqnarray*}
\norm{ \wg f^{(r)}}{1}&=&\int_{-1}^1 \wg(x)
\left|\int_0^x f^{(r+1)}(u)\,du\right|\,dx\\
&\le&\int_{-1}^1  \wg(x) \left|\int_0^x
\wab(u)|f^{(r+1)}(u)| \wab^{-1}(u) \,du\right|\,dx\\
&\leq&
\norm{ \wab f^{(r+1)}}{1}\int_{-1}^1 \wg (x) \max_{u\in [0,x]} \wab^{-1}(u)  dx
\leq c\norm{
\wab  f^{(r+1)}}{1}.
\end{eqnarray*}
Suppose now that $1<p <\infty$ and denote $p' := p/(p-1)$.
Using H\"older's inequality we have
\begin{eqnarray*}
\norm{\wg f^{(r)}}{p}^p&=&\int_{-1}^1\wg^p(x)\left|\int_0^x f^{(r+1)}(u)\,du\right|^p\,dx \\
& \leq& \int_{-1}^1 \wg^p(x)\left|\left(\int_0^x \wab^{-p'}(u)\, du\right)^{1/p'}
\left(\int_0^x|\wab (u) f^{(r+1)}(u)|^p\,du\right)^{1/p}\right|^p\,dx\\
& \leq&
\norm{\wab f^{(r+1)}}{p}^p
\left( \int_{-1}^0 + \int_0^1 \right)  \wg^p(x)\left| \int_0^x \wab^{-p'}(u)\, du\right|^{p/p'}\,dx \\
& =:& \norm{\wab f^{(r+1)}}{p}^p \cdot \left( I_{\alpha,\beta,\gamma}^- + I_{\alpha,\beta,\gamma}^+ \right) .
\end{eqnarray*}
We will now show that $I_{\alpha,\beta,\gamma}^+ \leq c$ (the proof that the same estimate holds for $I_{\alpha,\beta,\gamma}^-$ is analogous). Indeed,
if $\beta p' \ne 1$, then
\begin{eqnarray*}
I_{\alpha,\beta,\gamma}^+ & \leq & c \int_0^1 (1-x)^{(\beta - \gamma)p} \left( \int_0^x (1-u)^{-\beta p'} \, du \right)^{p/p'}\, dx \\
& \leq & c \int_0^1 (1-x)^{(\beta - \gamma)p} \left(\max\{ 1, (1-x)^{-\beta p' +1} \}\right)^{p/p'}\, dx \\
&\leq &
c \int_0^1  \max\left\{ (1-x)^{(\beta - \gamma)p}, (1-x)^{-\gamma p + p -1} \right\} \, dx \leq c .
\end{eqnarray*}
Finally, if $\beta p' = 1$ (and so $\beta = 1-1/p$), then
\[
I_{\alpha,\beta,\gamma}^+   \leq   c \int_0^1 (1-x)^{(\beta - \gamma)p} |\ln (1-x) |^{p/p'} \, dx
  \leq   c \int_0^1 (1-x)^{p(1-\gamma)-1} |\ln (1-x) |^{p-1} \, dx \leq c .
\]
This completes the proof.
\end{proof}

\begin{remark} \label{rrrmrk}
 We actually proved that,
 if  $f\in\W_{p, r+1}^{\alpha,\beta}$ is such that    $f^{(r)}(0)=0$, then
\[
\norm{\wg f^{(r)}}{p} \leq c  \norm{\wab  f^{(r+1)}}{p}
\]
provided that $\gamma <1$ and $\alpha-\gamma, \beta-\gamma \in J_p$.

\end{remark}

\begin{corollary}
Let $1\leq p\leq \infty$, $r\in\N_0$ and $\alpha,\beta \in J_p$.
Then
\[
\W_{p, r+1}^{\alpha+(r+1)/2, \beta+(r+1)/2} \subset \W_{p, r}^{\alpha+r/2, \beta+r/2}
\]
and, in particular,
\[
\W_{p, r}^{\alpha+r/2, \beta+r/2} \subset \Wpab .
\]
\end{corollary}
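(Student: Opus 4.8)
The plan is to read off both inclusions directly from \lem{hsp}, so that the entire content reduces to a correct choice of $\gamma$ together with a check that the shifted exponents remain in $J_p$. No new estimate is needed beyond the one already proved in \lem{hsp}.

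For the first inclusion, I would apply \lem{hsp} with the weight exponents $\alpha+(r+1)/2$ and $\beta+(r+1)/2$ playing the roles of $\alpha$ and $\beta$ there, and with the choice $\gamma := 1/2$. Then $\gamma < 1$, and the resulting exponents are $\alpha+(r+1)/2 - 1/2 = \alpha + r/2$ and, likewise, $\beta + r/2$. Since $J_p$ is either $(-1/p,\infty)$ (if $p<\infty$) or $[0,\infty)$ (if $p=\infty$), it is closed under adding the nonnegative number $r/2$; hence $\alpha,\beta\in J_p$ forces $\alpha+r/2,\ \beta+r/2 \in J_p$, which is precisely the hypothesis that \lem{hsp} requires on $\alpha-\gamma$ and $\beta-\gamma$. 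The lemma then gives $f\in\W_{p,r}^{\alpha+r/2,\beta+r/2}$ for every $f\in\W_{p,r+1}^{\alpha+(r+1)/2,\beta+(r+1)/2}$, which is exactly the asserted inclusion.

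For the ``in particular'' statement, I would simply iterate. Replacing $r$ by $j-1$ in the inclusion just established is legitimate for each $1\le j\le r$, since the only requirement is $j-1\in\N_0$ together with $\alpha,\beta\in J_p$, both of which hold throughout. Chaining these gives
\[
\W_{p,r}^{\alpha+r/2,\beta+r/2} \subset \W_{p,r-1}^{\alpha+(r-1)/2,\beta+(r-1)/2} \subset \cdots \subset \W_{p,0}^{\alpha,\beta} = \Wpab,
\]
which is the second inclusion. Equivalently, this is a one-line induction on $r$ with the first inclusion as the inductive step and the trivial case $r=0$ at the base.

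I expect no genuine obstacle, since the analytic work is entirely contained in \lem{hsp}. The single point deserving care is verifying that every intermediate pair of exponents $\alpha+(j-1)/2,\ \beta+(j-1)/2$ stays in $J_p$, so that \lem{hsp} is applicable at each stage; this follows at once from the one-sided form of $J_p$ and from $(j-1)/2\ge 0$.
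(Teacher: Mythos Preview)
Your proposal is correct and is exactly what the paper intends: the corollary is stated without proof immediately after \lem{hsp}, and the intended argument is precisely to apply that lemma with the shifted exponents and $\gamma=1/2$, then iterate. Your verification that the intermediate exponents remain in $J_p$ is the only point to check, and you handle it correctly.
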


It was shown in \cite[Theorem 5.1]{kls} that, if $1\leq q\leq \infty$, $0<r<k$, and $f$ is such that
$f^{(r-1)}$ is locally absolutely continuous in $(-1,1)$ and
$\wab \varphi^r f^{(r)}\in \Lq[-1,1]$, $\alpha,\beta\geq 0$, then
\be\label{derivative}
\omega_\varphi^k(f,\delta)_{\wab,q}\leq
ct^r\omega^{k-r}_\varphi(f^{(r)},\delta)_{\wab\varphi^r,q} . \ee

Taking into account that $\wab\varphi^r =  w_{\alpha+r/2, \beta+r/2}$, together with \ineq{luther}, this implies the following Jackson-type result for weighted polynomial approximation (see also \cite[Theorem 5.2]{kls}).

\begin{corollary} \label{dircor}
If $k\in\N$, $0\leq r \leq k-1$, $1\leq q\leq \infty$, $\alpha,\beta\geq 0$, and  $f \in \W_{q, r}^{\alpha+r/2,\beta+r/2}$,   then there exists $n_0 \in\N$ such that
\be \label{direct}
E_n(f)_{\wab,q}\le c n^{-r} \omega^{k-r}_\varphi(f^{(r)},n^{-1})_{w_{\alpha+r/2, \beta+r/2},q} , \quad n\geq n_0 .
\ee

\end{corollary}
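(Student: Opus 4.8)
The plan is to combine the two ingredients quoted immediately above the statement: the weighted Jackson estimate \ineq{luther} and the derivative-reduction inequality \ineq{derivative}. First I would dispose of the degenerate case $r=0$: here the assertion is exactly \ineq{luther} applied to $f=f^{(0)}$ with the $k$th modulus and weight $\wab=w_{\alpha+0/2,\beta+0/2}$, so no further argument is needed (and this is why the boundary value $r=0$, which is excluded from \ineq{derivative}, causes no trouble).

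For $1\le r\le k-1$ I would argue in three short steps. Step one is to verify that \ineq{luther} applies: by the corollary following \lem{hsp} we have $\W_{q,r}^{\alpha+r/2,\beta+r/2}\subset\Wqab$, so $\norm{\wab f}{q}<\infty$, and since $\alpha,\beta\ge 0$ the estimate \ineq{luther} gives, for $n\ge n_0$,
\[
E_n(f)_{\wab,q}\le c\,\omega^k_\varphi(f,n^{-1})_{\wab,q}.
\]
Step two is to apply \ineq{derivative} with $\delta=n^{-1}$. The definition of $\W_{q,r}^{\alpha+r/2,\beta+r/2}$ supplies precisely the hypotheses required there, namely $f^{(r-1)}\in\AC_\loc(-1,1)$ and $w_{\alpha+r/2,\beta+r/2}f^{(r)}\in\Lq[-1,1]$; using the identity $\wab\varphi^r=w_{\alpha+r/2,\beta+r/2}$ this says $\wab\varphi^rf^{(r)}\in\Lq$, which is exactly the weighted factor appearing in \ineq{derivative}, whence
\[
\omega^k_\varphi(f,n^{-1})_{\wab,q}\le c\,n^{-r}\,\omega^{k-r}_\varphi(f^{(r)},n^{-1})_{\wab\varphi^r,q}.
\]
Step three is to chain the two displays and rewrite $\wab\varphi^r=w_{\alpha+r/2,\beta+r/2}$, which yields \ineq{direct} with the same $n_0$ that \ineq{luther} produced.

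Since the corollary is a direct splicing of two results quoted verbatim, I do not expect any genuine analytic obstacle; the only points needing care are bookkeeping. I would make sure that the constraint $0<r<k$ attached to \ineq{derivative} is respected (it is, because $1\le r\le k-1$ forces $0<r<k$), that the $r=0$ case is peeled off and handled by \ineq{luther} alone as above, and that the hypotheses of \ineq{derivative} are read off correctly from the definition of $\W_{q,r}^{\alpha+r/2,\beta+r/2}$ together with the weight identity $\wab\varphi^r=w_{\alpha+r/2,\beta+r/2}$ and the standing assumption $\alpha,\beta\ge 0$.
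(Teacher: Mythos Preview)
Your proposal is correct and follows exactly the route the paper indicates: the paper simply states that the corollary follows by combining \ineq{luther} with \ineq{derivative} via the identity $\wab\varphi^r=w_{\alpha+r/2,\beta+r/2}$, and you have spelled out precisely these steps (including the harmless $r=0$ case and the membership check $\W_{q,r}^{\alpha+r/2,\beta+r/2}\subset\Wqab$). There is nothing to add.
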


Now, let $1\leq q < p\leq \infty$, $k\in\N$, $1\leq r \leq k-1$, and let $f\in\M^k\cap  \W_{p, r}^{\alpha+r/2,\beta+r/2}$.
Using \cor{maincor} and the fact that $f^{(r)}\in\M^{k-r}$, we conclude that, for $n\geq n_0$,
\begin{eqnarray} \label{drdr}
E_n(f)_{\wab,q}
& \leq&  c n^{-r} \Upsilon_{1/n}^{\alpha+r/2,\beta+r/2}(k-r, q, p) \norm{w_{\alpha+r/2, \beta+r/2} f^{(r)}}{p} .
 \end{eqnarray}

It is not hard to see that this estimate holds for $r-1 \leq n < n_0$ as well. Indeed, given a function
$f\in \W_{p, r}^{\alpha+r/2,\beta+r/2}$, let $T_{r-1}(f)$ be its Maclaurin polynomial of degree $\leq r-1$ (see \ineq{taylor}).
Then, for $r-1 \leq n < n_0$,  we have using \rem{rrrmrk}
\[
E_n(f)_{\wab,q} \leq \norm{\wab(f-T_{r-1}(f))}{q}   \leq c \norm{\wab \varphi^r f^{(r)}}{q} \leq c  \norm{w_{\alpha+r/2, \beta+r/2} f^{(r)}}{p} ,  \quad q \leq p .
\]

Hence,  the following  is another corollary of \thm{maintheorem}.

\begin{corollary} \label{corrjack}
Let  $1\leq q < p\leq \infty$, $k\geq 2$, $1\leq r \leq k-1$, $\alpha,\beta\geq 0$, and let $f\in\M^k\cap \W_{p, r}^{\alpha+r/2,\beta+r/2}$.   Then, for any $n\geq r$,
\[
E_n(f)_{\wab,q} \leq c  \norm{w_{\alpha+r/2, \beta+r/2} f^{(r)}}{p}
\begin{cases}
n^{-r-2/q+2/p}\,, &   \mbox{\rm if  $k-r\geq 2$  and $(k-r,q,p)\ne (2, 1, \infty),$}\\
n^{-r-2} \ln (n+1) \,, &   \mbox{\rm if $k -r = 2$, $q=1$ and  $ p=\infty $,}\\
n^{-r-2/q+2/p}\,, &   \mbox{\rm if   $k-r=1$ and $p < 2q$, }\\
n^{-r-1/q}  [\ln (n+1)]^{1/(2q)}   \,, &   \mbox{\rm if $k-r=1$ and $p=2q$,}\\
n^{-r-1/q} \,, &  \mbox{\rm if $k-r=1$ and $p>2q$.}
\end{cases}
 \]
 \end{corollary}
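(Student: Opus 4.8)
The plan is to obtain the corollary as an immediate consequence of estimate \ineq{drdr}, whose derivation is already in place: combining the Jackson inequality \ineq{direct} of \cor{dircor} (applied to $f$) with \cor{maincor} (applied to $f^{(r)}\in\M^{k-r}$) gives
\[
E_n(f)_{\wab,q}\leq c\,n^{-r}\,\Upsilon_{1/n}^{\alpha+r/2,\beta+r/2}(k-r,q,p)\,\norm{w_{\alpha+r/2,\beta+r/2}f^{(r)}}{p},\qquad n\geq n_0,
\]
and the discussion following \ineq{drdr} extends this to $r-1\leq n<n_0$ by subtracting the Maclaurin polynomial $T_{r-1}(f)$ and using \rem{rrrmrk}, so that the bound is valid for all $n\geq r$. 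What remains is purely to make the factor $n^{-r}\Upsilon_{1/n}^{\alpha+r/2,\beta+r/2}(k-r,q,p)$ explicit.

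Before that, I would record the two routine points that license \cor{dircor}. Since $q<p$ and $[-1,1]$ has finite measure, Hölder's inequality yields $\norm{w_{\alpha+r/2,\beta+r/2}f^{(r)}}{q}\leq c\,\norm{w_{\alpha+r/2,\beta+r/2}f^{(r)}}{p}<\infty$, so $f\in\W_{q,r}^{\alpha+r/2,\beta+r/2}$ and the hypotheses are met; and the identity $\wab\varphi^r=w_{\alpha+r/2,\beta+r/2}$ is what converts the weighted modulus in \ineq{direct} into the weighted norm appearing in the statement. That $f^{(r)}\in\M^{k-r}$ is the differentiation property of $k$-monotone functions recalled in the introduction, with $f^{(k-1)}$ read as the nondecreasing a.e.\ derivative in the borderline case $r=k-1$.

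The one genuine observation is that the superscript pair of $\Upsilon_{1/n}^{\alpha+r/2,\beta+r/2}$ is never $(0,0)$: since $\alpha,\beta\geq 0$ and $r\geq 1$, both $\alpha+r/2$ and $\beta+r/2$ are strictly positive. Hence the anomalous branch $\delta^2$ of \ineq{ups} (the one tied to $(\alpha,\beta)=(0,0)$) cannot be triggered, and whenever $k-r=2$, $q=1$, $p=\infty$ one is always in the branch $\delta^2|\ln\delta|$. This is precisely why the second line of the corollary carries $\ln(n+1)$ with no exceptional subcase, in contrast to \thm{maintheorem}.

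Finally I would substitute $\delta=1/n$ into \ineq{ups}, multiply through by $n^{-r}$, and match the branches, using $|\ln(1/n)|=\ln n\sim\ln(n+1)$ for $n\geq r\geq 1$: for $k-r\geq 2$ off the exceptional triple and for $k-r=1$ with $p<2q$ this gives $n^{-r-2/q+2/p}$; for the triple $(k-r,q,p)=(2,1,\infty)$ it gives $n^{-r-2}\ln(n+1)$; for $k-r=1$, $p=2q$ it gives $n^{-r-1/q}[\ln(n+1)]^{1/(2q)}$; and for $k-r=1$, $p>2q$ it gives $n^{-r-1/q}$, matching the five lines of the statement. For the finitely many $n$ with $r\leq n<n_0$ the target rates are bounded above and below by constants, so the $O(1)$ bound from the Maclaurin step is absorbed into $c$. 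There is no analytic obstacle here; the only thing requiring care is keeping the dichotomy $k-r=1$ versus $k-r\geq 2$ straight and invoking the positivity of the shifted weights at the right moment.
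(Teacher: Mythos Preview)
Your argument is correct and follows essentially the same route as the paper: combine \ineq{direct} with \cor{maincor} to obtain \ineq{drdr} for $n\ge n_0$, extend to small $n$ via the Maclaurin polynomial and \rem{rrrmrk}, and then read off the cases from \ineq{ups}. Your explicit observations that $f\in\W_{q,r}^{\alpha+r/2,\beta+r/2}$ by H\"older and that $(\alpha+r/2,\beta+r/2)\neq(0,0)$ (so the anomalous $\delta^2$ branch of $\Upsilon$ never arises) are useful clarifications that the paper leaves implicit.
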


It follows from Corollaries~\ref{corrr1} and \ref{corrr2} that   estimates in \cor{corrjack} are exact in the sense that none of the powers of $n$ can be decreased.
Using the inverse theorem \cite[Theorem 9.1]{kls-ca} it is also possible to show that, in the case $\alpha=\beta=0$, $k=r+1$ and $p/2=q>1$, the power $1/(2q)$ of $\ln(n+1)$ cannot be decreased.

\vspace{1cm}
{\bf \large 2.}
 Littlewood's inequality
$\norm{g}{q} \leq \norm{g}{s}^{\theta} \norm{g}{p}^{1-\theta}$, $1/q=\theta/s + (1-\theta)/p$, $1\leq s < q <p \leq \infty$,
implies that
\[
\Omega_\varphi^k(f,\delta)_{w, q} \leq \Omega_\varphi^k(f,\delta)_{w, s}^\theta \Omega_\varphi^k(f,\delta)_{w, p}^{1-\theta} ,
\]
with similar inequalities holding for $\overrightarrow \Omega_\varphi^k$ and $\overleftarrow \Omega_\varphi^k$ as well. Therefore,
\begin{eqnarray*}
  \omega_\varphi^k(f,\delta)_{w,q}  &=&     \Omega_\varphi^k(f,\delta)_{w,q}
 +  \overrightarrow \Omega_\varphi^k(f,\delta)_{w,q} + \overleftarrow\Omega_\varphi^k(f,\delta)_{w,q} \\
 & \leq & \Omega_\varphi^k(f,\delta)_{w, s}^\theta \Omega_\varphi^k(f,\delta)_{w, p}^{1-\theta} +
 \overrightarrow \Omega_\varphi^k(f,\delta)_{w, s}^\theta \overrightarrow \Omega_\varphi^k(f,\delta)_{w, p}^{1-\theta} +
 \overleftarrow\Omega_\varphi^k(f,\delta)_{w, s}^\theta \overleftarrow\Omega_\varphi^k(f,\delta)_{w, p}^{1-\theta} \\
 & \leq & 3\,  \omega_\varphi^k(f,\delta)_{w, s}^\theta \, \omega_\varphi^k(f,\delta)_{w, p}^{1-\theta} .
 \end{eqnarray*}
 Hence, using \ineq{direct} and \thm{maintheorem}
we have the following estimates for $f\in\M^k\cap \W_{p, r}^{\alpha+r/2,\beta+r/2}$, $0\leq r \leq k-1$:
\begin{eqnarray*}
E_n(f)_{\wab,q}   &\leq&   c n^{-r} \, \omega_\varphi^{k-r}(f^{(r)},n^{-1})_{w_{\alpha+r/2,\beta+r/2}, q} \\ \nonumber
&\leq&
c n^{-r} \, \omega_\varphi^{k-r}(f^{(r)},n^{-1})_{w_{\alpha+r/2,\beta+r/2}, s}^\theta \, \omega_\varphi^{k-r}(f^{(r)},n^{-1})_{w_{\alpha+r/2,\beta+r/2}, p}^{1-\theta} \\ \nonumber
&\leq&
c n^{-r} \, \left[\Upsilon_{1/n}^{\alpha+r/2,\beta+r/2}(k-r,s,p)\right]^{\theta}  \, \norm{w_{\alpha+r/2,\beta+r/2}f^{(r)}}{p}^\theta  \, \omega_\varphi^{k-r}(f^{(r)},n^{-1})_{w_{\alpha+r/2,\beta+r/2}, p}^{1-\theta} .
\end{eqnarray*}
If   $s$  is such that $1<s<q$ and $s\ne  p/2$, then
\[
\Upsilon_{1/n}^{\alpha+r/2,\beta+r/2}(k-r,s,p) =
\begin{cases}
n^{-2/s+2/p}\,, &   \mbox{\rm if  $k-r\geq 2$,}\\
n^{-2/s+2/p}\,, &   \mbox{\rm if   $k-r=1$ and $p < 2s$, }\\
n^{-1/s} \,, &  \mbox{\rm if $k-r=1$ and $p>2s$,}
\end{cases}
\]
and so
\[
\left[\Upsilon_{1/n}^{\alpha+r/2,\beta+r/2}(k-r,s,p)\right]^\theta =
\left\{
\begin{array}{ll}
n^{-2/q + 2/p}\,, & \quad \mbox{\rm if $0\leq r\leq k-2$, or $r=k-1$ and $p<2s$,} \\
n^{-(p-q)/q(p-s)} \,, & \quad \mbox{\rm if $r=k-1$ and $p > 2s$.}
\end{array} \right.
\]
We now note that one can choose $s$ so that $1<s<q$ and $p<2s$ iff $p<2q$. Also, note that, for any $s>1$,
$\left[\Upsilon_{1/n}^{\alpha+(k-1)/2,\beta+(k-1)/2}(1,s,\infty)\right]^\theta = n^{-1/q}$.

Therefore, taking into account that, in
the case $p<\infty$, $\omega_\varphi^{k-r}(f^{(r)},n^{-1})_{w_{\alpha+r/2,\beta+r/2}, p} \to 0$ as $n\to \infty$, and that
$\omega_\varphi^{k-r}(f^{(r)},n^{-1})_{w_{\alpha+r/2,\beta+r/2}, \infty} \to 0$ as $n\to \infty$ provided that  $f^{(r)}$ is continuous on $(-1,1)$ and $\lim_{x \pm 1}  w_{\alpha+r/2,\beta+r/2}(x) f^{(r)}(x) = 0$,
 we have
the following two corollaries of \thm{maintheorem}.

\begin{corollary} \label{cor8.6}
Let $k\in\N$, $1<q<p<\infty$, $0\leq r \leq k-1$, $\alpha,\beta\geq 0$, and let  $f\in\M^k \cap  \W_{p, r}^{\alpha+r/2,\beta+r/2}$. Then
\[
E_n(f)_{\wab,q}  = o\left( n^{-r-2/q+2/p}   \right) ,\quad n \to \infty ,
\]
where either $0\leq r\leq k-2$, or $r=k-1$ and $p<2q$.
\end{corollary}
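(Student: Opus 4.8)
The plan is to read off the conclusion from the factorized estimate derived just above by Littlewood's inequality, namely
\[
E_n(f)_{\wab,q} \leq c\, n^{-r} \left[\Upsilon_{1/n}^{\alpha+r/2,\beta+r/2}(k-r,s,p)\right]^{\theta} \norm{w_{\alpha+r/2,\beta+r/2}f^{(r)}}{p}^\theta\, \omega_\varphi^{k-r}\big(f^{(r)},n^{-1}\big)_{w_{\alpha+r/2,\beta+r/2}, p}^{1-\theta},
\]
which holds for every interpolation exponent $s$ with $1<s<q$, the corresponding $\theta\in(0,1)$ being fixed by $1/q=\theta/s+(1-\theta)/p$. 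The point of this factorization is that it isolates a power of $n$ yielding the exact rate from a residual factor that will be shown to tend to $0$.

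First I would select $s$. When $0\leq r\leq k-2$ any admissible $s$ suffices; when $r=k-1$ and $p<2q$ I would choose $s$ with $1<s<q$ and also $p<2s$, which is possible precisely because $p<2q$, as noted in the paragraph preceding the statement. For either choice, the evaluation of $\big[\Upsilon_{1/n}^{\alpha+r/2,\beta+r/2}(k-r,s,p)\big]^\theta$ recorded above equals $n^{-2/q+2/p}$, so the displayed bound becomes
\[
E_n(f)_{\wab,q} \leq c\, n^{-r-2/q+2/p}\, \norm{w_{\alpha+r/2,\beta+r/2}f^{(r)}}{p}^\theta\, \omega_\varphi^{k-r}\big(f^{(r)},n^{-1}\big)_{w_{\alpha+r/2,\beta+r/2}, p}^{1-\theta}.
\]

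Next I would note that $f\in\W_{p,r}^{\alpha+r/2,\beta+r/2}$ makes $\norm{w_{\alpha+r/2,\beta+r/2}f^{(r)}}{p}$ a finite constant independent of $n$, so its $\theta$-th power is harmless. The decisive ingredient is that $p<\infty$, which guarantees $\omega_\varphi^{k-r}(f^{(r)},n^{-1})_{w_{\alpha+r/2,\beta+r/2},p}\to 0$ as $n\to\infty$; this is exactly the convergence recalled immediately before the corollary. Since $1-\theta>0$, the $(1-\theta)$-th power of this modulus still tends to $0$, and therefore the right-hand side is $n^{-r-2/q+2/p}$ multiplied by a factor of order $o(1)$, which is the asserted $o(n^{-r-2/q+2/p})$.

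I do not anticipate a real obstacle: the whole argument is a matter of bookkeeping around a single convergence fact. The only genuinely essential hypothesis is $p<\infty$, used through $\omega_\varphi^{k-r}(f^{(r)},n^{-1})_{w,p}\to 0$; for $p=\infty$ this convergence can fail, which is why that case is handled separately (with an added continuity/vanishing condition on $f^{(r)}$). The restriction ``$0\leq r\leq k-2$, or $r=k-1$ and $p<2q$'' is needed only so that the interpolated $\Upsilon$-factor collapses to the clean power $n^{-2/q+2/p}$ and not to the anomalous exponent $n^{-(p-q)/q(p-s)}$ occurring in the remaining case.
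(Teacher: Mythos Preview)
Your proposal is correct and follows exactly the approach taken in the paper: the corollary is stated as an immediate consequence of the Littlewood-interpolation estimate derived just before it, with the same choice of $s$ and the same appeal to $\omega_\varphi^{k-r}(f^{(r)},n^{-1})_{w_{\alpha+r/2,\beta+r/2},p}\to 0$ for $p<\infty$.
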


\begin{corollary} \label{cor8.7}
Let $k\in\N$, $1<q<\infty$, $0\leq r \leq k-1$, $\alpha,\beta\geq 0$, and let $f\in\M^k$ be such that  $f^{(r)}$ is continuous on $(-1,1)$ and $\lim_{x \pm 1}  w_{\alpha+r/2,\beta+r/2}(x) f^{(r)}(x) = 0$. Then
\[
E_n(f)_{\wab,q}  = o\left( n^{-r-\min\{k-r,2\}/q}   \right) , \quad n \to \infty .
\]
\end{corollary}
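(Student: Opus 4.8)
The plan is to invoke the chain of inequalities derived just above (from Littlewood's inequality, the direct estimate \ineq{direct}, and \cor{maincor}) specialized to $p=\infty$, and then to peel off a factor that tends to $0$. First I would record that the hypotheses put $f$ in the right class. Since $f\in\M^k$ and $r\le k-1$, the derivative $f^{(r)}$ exists on $(-1,1)$ and lies in $\M^{k-r}$; it is assumed continuous there, and $f^{(r-1)}$ is locally absolutely continuous, so $f\in\W_{\infty,r}^{\alpha+r/2,\beta+r/2}$ once we know $w_{\alpha+r/2,\beta+r/2}f^{(r)}$ is bounded. But $\lim_{x\to\pm1}w_{\alpha+r/2,\beta+r/2}(x)f^{(r)}(x)=0$ together with continuity on $(-1,1)$ forces exactly that, so $\norm{w_{\alpha+r/2,\beta+r/2}f^{(r)}}{\infty}<\infty$. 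Since $q>1$ I may fix any $s$ with $1<s<q$; then $\theta:=s/q\in(0,1)$ satisfies $1/q=\theta/s$, which is the $p=\infty$ instance of $1/q=\theta/s+(1-\theta)/p$.

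Next I would specialize the displayed estimate to $p=\infty$, obtaining
\[
E_n(f)_{\wab,q}\le c\,n^{-r}\left[\Upsilon_{1/n}^{\alpha+r/2,\beta+r/2}(k-r,s,\infty)\right]^{\theta}\norm{w_{\alpha+r/2,\beta+r/2}f^{(r)}}{\infty}^{\theta}\,\omega_\varphi^{k-r}(f^{(r)},n^{-1})_{w_{\alpha+r/2,\beta+r/2},\infty}^{1-\theta}.
\]
Because $s>1$ we avoid the anomalous triple $(2,1,\infty)$, so by \ineq{ups} with $p=\infty$ one has $\Upsilon_{1/n}^{\alpha+r/2,\beta+r/2}(k-r,s,\infty)=n^{-2/s}$ when $k-r\ge2$ and $=n^{-1/s}$ when $k-r=1$ (here $\infty>2s$); raising to the power $\theta=s/q$ collapses both to $n^{-\min\{k-r,2\}/q}$, exactly as noted after \ineq{ups}. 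Hence
\[
E_n(f)_{\wab,q}\le c\,n^{-r-\min\{k-r,2\}/q}\,\norm{w_{\alpha+r/2,\beta+r/2}f^{(r)}}{\infty}^{\theta}\,\omega_\varphi^{k-r}(f^{(r)},n^{-1})_{w_{\alpha+r/2,\beta+r/2},\infty}^{1-\theta}.
\]

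The two leading factors are fixed finite constants, so the whole bound will be $o(n^{-r-\min\{k-r,2\}/q})$ as soon as the trailing modulus vanishes. The \emph{main obstacle} is precisely showing $\omega_\varphi^{k-r}(f^{(r)},\delta)_{w_{\alpha+r/2,\beta+r/2},\infty}\to0$ as $\delta\to0^+$; for this I would appeal to the characterization recalled in the introduction (the argument of \cite[p. 287]{dsh}): for a function continuous on $(-1,1)$, the weighted $p=\infty$ DT modulus tends to $0$ exactly when the weighted function vanishes at $\pm1$, which is the standing hypothesis on $f^{(r)}$. Some care is needed with the boundary indices: when $r\ge1$ the exponents $\alpha+r/2,\beta+r/2$ are both $\ge1/2>0$ and the cited statement applies verbatim, while for $r=0$ with $\alpha$ or $\beta$ equal to $0$ the endpoint condition still forces $f$ to extend continuously, with value $0$, to the relevant endpoint, so the modulus again decays. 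Since $1-\theta>0$, the decaying modulus raised to the power $1-\theta$ remains $o(1)$, and the displayed bound becomes $o(n^{-r-\min\{k-r,2\}/q})$, as claimed.
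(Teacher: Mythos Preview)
Your proof is correct and follows essentially the same approach as the paper: specialize the Littlewood-interpolation chain (direct estimate $\Rightarrow$ Littlewood $\Rightarrow$ \cor{maincor}) to $p=\infty$, compute $[\Upsilon_{1/n}^{\alpha+r/2,\beta+r/2}(k-r,s,\infty)]^{s/q}=n^{-\min\{k-r,2\}/q}$, and then use the hypothesis on $f^{(r)}$ to force the remaining $\LL_\infty$-modulus factor to vanish. Your treatment is in fact slightly more careful than the paper's, since you explicitly verify $f\in\W_{\infty,r}^{\alpha+r/2,\beta+r/2}$ and you address the boundary case $r=0$ with $\alpha$ or $\beta$ equal to $0$, where the cited characterization from the introduction is stated only for strictly positive exponents.
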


\sect{Lower estimates of polynomial approximation}

 The following Remez-type inequality follows from \cite[(7.16), (6.10)]{mt}.

\begin{theorem}
Let $1\leq p \leq  \infty$, and let $w$ be a doubling weight in the case $1\leq p<\infty$ or an $A^*$ weight in the case $p=\infty$. For every $\Lambda\leq n$,
there is a constant $C=C(\Lambda)$  such that, if $E \subset [-1,1]$ is an interval and $ \int_E (1-x^2)^{-1/2} dx   \leq \Lambda/n$, then, for each $p_n\in\Pn$, we have
\[
\int_{-1}^1 |p_n(x)|^p w(x)\, dx \leq C \int_{[-1,1]\setminus E} |p_n(x)|^p w(x) \, dx , \quad \mbox{\rm if } \; 1\leq p <\infty,
\]
or
\[
\norm{p_n w}{\LL_\infty[-1,1]} \leq C \norm{p_n w}{\LL_\infty([-1,1]\setminus E)}, \quad \mbox{\rm if } \; p= \infty.
\]
\end{theorem}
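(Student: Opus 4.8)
The plan is to deduce both inequalities from the local weighted polynomial estimates of Mastroianni and Totik by a standard domain-decomposition argument. Throughout, write $\mu(I):=\int_I(1-x^2)^{-1/2}\,dx$ for the arcsine measure of an interval $I\subseteq[-1,1]$; the hypothesis on $E$ reads $\mu(E)\leq\Lambda/n$, and each Chebyshev subinterval $I_j=[t_j,t_{j-1}]$ satisfies $\mu(I_j)=\pi/n$, so $E$ meets at most $O(\Lambda)$ of them. The first move in either case is to enlarge $E$ to an interval $E^*\supseteq E$ with $\mu(E^*)=O(\Lambda/n)$, chosen so that $E^*\setminus E$ contains at least one full Chebyshev subinterval, i.e. $\mu(E^*\setminus E)\geq c/n$; if $E$ abuts an endpoint, I would add the room on the available side only so that $E^*\subseteq[-1,1]$.

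Consider first $1\leq p<\infty$. Splitting $\int_{-1}^1=\int_E+\int_{[-1,1]\setminus E}$, the asserted inequality is equivalent to the one-sided bound
\[
\int_E|p_n|^pw\leq C'\int_{[-1,1]\setminus E}|p_n|^pw ,
\]
with $C=C'+1$. Since $E^*\setminus E\subseteq[-1,1]\setminus E$, it then suffices to prove the local comparison
\[
\int_E|p_n|^pw\leq C\int_{E^*\setminus E}|p_n|^pw .
\]
This is exactly what the doubling-weight machinery supplies: on the interval $E^*$, whose arcsine length is a bounded multiple of $1/n$ (so that $n\,\mu(E^*)=O(\Lambda)$), the weighted mass of $|p_n|^p$ on the subinterval $E$ is controlled by its mass on the comparably-sized remainder $E^*\setminus E$, with constant depending only on $\Lambda$ and the doubling constant $L$ of $w$. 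Quantitatively this is \cite[(6.10)]{mt}, applied across the $O(\Lambda)$ consecutive Chebyshev pieces that $E^*$ meets and combined with the consecutive-interval comparison $\int_{I_j}|p_n|^pw\leq C\int_{I_{j\pm1}}|p_n|^pw$ valid for doubling $w$; chaining these propagates the mass from $E$ into $E^*\setminus E$ and yields the local comparison with $C=C(\Lambda,L)$.

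For $p=\infty$ the scheme is identical, with the $A^*$ hypothesis on $w$ in place of the doubling condition and the sup-norm estimate \cite[(7.16)]{mt} in place of the integral comparison. I would pick $x_0\in[-1,1]$ at which $\norm{p_nw}{\LL_\infty[-1,1]}$ is essentially attained; if $x_0\notin E$ the inequality is immediate, while if $x_0\in E$ the $A^*$ sup-norm transfer carries the value $|(p_nw)(x_0)|$ to a comparable value on $E^*\setminus E\subseteq[-1,1]\setminus E$, again with a constant depending only on $\Lambda$ and the $A^*$ constant of $w$.

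The only nonroutine point --- and the expected obstacle --- is checking that the cited Mastroianni and Totik inequalities apply here with a constant depending solely on $\Lambda$ (and on the weight's doubling or $A^*$ constant): one must verify that the scale-invariant quantity governing those inequalities is precisely $n\,\mu(E^*)$, so that the normalization $\mu(E)\leq\Lambda/n$ furnishes exactly the bound their hypotheses demand. Once $E^*$ is arranged so that $n\,\mu(E^*)=O(\Lambda)$ while $E^*\setminus E$ retains full Chebyshev size, everything else is bookkeeping: the domain decomposition, discarding the nonnegative term $\int_{[-1,1]\setminus E}|p_n|^pw$, and the endpoint adjustments when $E$ touches $\pm1$.
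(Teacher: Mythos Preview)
The paper does not actually prove this theorem: it is stated with the single sentence ``The following Remez-type inequality follows from \cite[(7.16), (6.10)]{mt},'' and no argument is given. In other words, in the paper the result is simply quoted from Mastroianni--Totik, where (6.10) is already the integral Remez inequality for doubling weights and (7.16) the sup-norm version for $A^*$ weights; nothing beyond the citation is supplied.

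Your proposal, by contrast, tries to reconstruct a proof via a domain decomposition and a chaining argument over Chebyshev subintervals. This is more than the paper does, but you have slightly misread the role of \cite[(6.10),(7.16)]{mt}: these are not merely local comparison estimates that must be chained together --- they are themselves the Remez-type inequalities, so invoking them already gives the conclusion directly. Your sketch of enlarging $E$ to $E^*$ and propagating mass across $O(\Lambda)$ Chebyshev intervals is roughly how one might prove such inequalities from scratch, and it is plausible as an outline, but the paper neither needs nor provides it. If your aim is to match the paper, the correct ``proof'' is simply the citation.
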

 We recall that $w$ is a doubling weight if
$
  \int_{2I\cap [-1,1]} w(x)\, dx  \leq  L \int_I w(x)\, dx  ,
$
for all intervals $I\subset [-1,1]$ ($2I$ is the interval twice the length of $I$ and with midpoint at the midpoint of $I$), and it is an $A^*$ weight if, for all intervals $I\subset [-1,1]$ and $x\in I$,
$w(x) \leq L  \int_I w(x)\, dx/|I| $.

Since $\wab^p$, $\alpha,\beta > -1/p$, is a doubling weight, and $\wab$, $\alpha,\beta \geq 0$, is an $A^*$ weight, we immediately get the following corollary  (see also \cite{emn}).

\begin{corollary} \label{remez}
Let $1\leq p \leq  \infty$, and let $\alpha,\beta \in J_p$. For every $\Lambda\leq n$,
there is a constant $C=C(\Lambda)$  such that, if $E \subset [-1,1]$ is an interval and $ \int_E (1-x^2)^{-1/2} dx   \leq \Lambda/n$, then, for each $p_n\in\Pn$, we have
\[
\norm{p_n \wab}{\Lp[-1,1]} \leq C \norm{p_n \wab}{\Lp([-1,1]\setminus E)}.
\]
\end{corollary}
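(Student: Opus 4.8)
The plan is to deduce the corollary directly from the preceding Remez-type theorem, the only work being to check that $\wab$ (or a suitable power of it) lies in the correct weight class for each value of $p$, so that the theorem applies verbatim with the stated constant $C=C(\Lambda)$.

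First I would split into the cases $1\le p<\infty$ and $p=\infty$. For $1\le p<\infty$, I set $w:=\wab^p=(1+x)^{\alpha p}(1-x)^{\beta p}$ and use the standard fact that a generalized Jacobi weight $(1+x)^a(1-x)^b$ is doubling on $[-1,1]$ precisely when $a,b>-1$. Since $\alpha,\beta\in J_p$ means $\alpha,\beta>-1/p$, we have $\alpha p,\beta p>-1$, so $w=\wab^p$ is a doubling weight. Applying the theorem with this $w$ gives, for $E$ and $p_n$ as in the hypotheses,
\[
\int_{-1}^1 |p_n(x)|^p \wab^p(x)\,dx \le C \int_{[-1,1]\setminus E} |p_n(x)|^p \wab^p(x)\,dx,
\]
which is exactly $\norm{p_n\wab}{\Lp[-1,1]}^p \le C\,\norm{p_n\wab}{\Lp([-1,1]\setminus E)}^p$; taking $p$-th roots and relabelling the constant yields the claimed inequality.

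For $p=\infty$, $\alpha,\beta\in J_\infty$ means $\alpha,\beta\ge 0$, and in this range $\wab$ is an $A^*$ weight (again a standard property of Jacobi weights with nonnegative exponents, recorded just before the corollary). I would then apply the $p=\infty$ part of the theorem with $w:=\wab$ directly to obtain
\[
\norm{p_n\wab}{\LL_\infty[-1,1]} \le C\,\norm{p_n\wab}{\LL_\infty([-1,1]\setminus E)},
\]
which is the desired statement.

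The only point requiring any attention — and hence the ``main obstacle,'' though it is very mild — is matching the weight classes to the two regimes: for finite $p$ one must raise $\wab$ to the power $p$ before invoking the doubling hypothesis, so that the exponent condition becomes $\alpha p,\beta p>-1$ rather than $\alpha,\beta>-1$, whereas for $p=\infty$ one uses $\wab$ itself together with the stronger $A^*$ condition. Once these identifications are made the corollary is immediate, since both weight-membership facts are precisely the ones stated in the paragraph preceding it.
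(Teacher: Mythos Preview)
Your proposal is correct and follows essentially the same approach as the paper: it notes that $\wab^p$ is a doubling weight when $\alpha,\beta>-1/p$ and that $\wab$ is an $A^*$ weight when $\alpha,\beta\ge 0$, and then applies the preceding Remez-type theorem directly. The paper's own justification is just that single sentence, so your write-up is actually more detailed than the original.
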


We are now ready to construct (truncated power) functions which will yield lower estimates.
Note that, if $k\in\N$, $1\leq p\leq \infty$, $0\leq r\leq k-1$, $0\leq \xi <1$, $\alpha\in\R$, $\beta\in J_p$ and  $f(x):= (x-\xi)_+^{k-1}$, then
\be \label{trtr}
\norm{\wab f^{(r)}}{p} \sim (1-\xi)^{\beta +k-r-1 +1/p} .
\ee

\begin{lemma} \label{eneg}
Let $1\leq  q \leq \infty$, $k\in\N$,   $\alpha,\beta\geq 0$,   $n\geq 2k$, $0\leq \xi \leq 1-2k^2n^{-2}$ and let $f(x):= (x-\xi)_+^{k-1}$. Then
\[
E_n(f)_{\wab,q} \geq    c n^{-k+1-1/q} (1-\xi)^{\beta   + (k-1) /2 +1/(2q)},
\]
for some constant $c$ independent of $n$.
\end{lemma}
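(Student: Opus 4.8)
The plan is to fix an arbitrary $P\in\Pn$, set $\eta:=\norm{\wab(f-P)}{\Lq}$, and prove $\eta\ge c\,n^{-k+1-1/q}(1-\xi)^{\beta+(k-1)/2+1/(2q)}$; since $P$ is arbitrary this is the assertion. All of the lower estimate will be extracted from a neighbourhood of the singularity $\xi$ at the Chebyshev scale $\Delta:=\varphi(\xi)/n$. First I would put $E^-:=[\xi-a\Delta,\xi]$ and $E^+:=[\xi,\xi+a\Delta]$ with a small constant $a=a(k)$. Using $0\le\xi\le 1-2k^2n^{-2}$ and $n\ge 2k$ one checks $\varphi(\xi)\ge\sqrt2\,k/n$ and $a\Delta\le\tfrac12(1-\xi)$, so that $E:=E^-\cup E^+\subset[-1,1]$, the Chebyshev measure $\int_E(1-x^2)^{-1/2}\,dx$ is $\le\Lambda/n$ for some absolute $\Lambda$, and (since $1+x\sim1$ and $1-x\sim1-\xi$ on $E$, cf.\ \prop{prchange}{e}) $\wab(x)\sim(1-\xi)^\beta$ for $x\in E$.

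On $E^-$ we have $f\equiv 0$, so $f-P=-P$ there, while on $E^+$ we have $f(x)=(x-\xi)^{k-1}=:\pi(x)$, so $f-P=\pi-P$. Comparing the weight to $(1-\xi)^\beta$ this gives $\norm{P}{\Lq(E^-)}\le c(1-\xi)^{-\beta}\eta$ and $\norm{\pi-P}{\Lq(E^+)}\le c(1-\xi)^{-\beta}\eta$. The key step is a pointwise companion of the Remez inequality $\cor{remez}$: because $E^-$ and $E^+$ are Chebyshev-short (Chebyshev measure $O(1/n)$), every $Q\in\Pn$ behaves on them like a polynomial of bounded degree, and hence obeys the local Nikolskii--Markov estimate
\[
|Q^{(k-1)}(\xi)|\le c\,\Delta^{-(k-1)-1/q}\norm{Q}{\Lq(E^\pm)},
\]
with $c=c(k)$ independent of $n$ and $\xi$; here the exponent $-(k-1)$ is the bounded-degree Markov factor on an interval of length $\sim\Delta$, $-1/q$ the bounded-degree Nikolskii factor (for $q=\infty$ read $1/q=0$).

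Applying this with $Q=P$ on $E^-$ and with $Q=\pi-P$ on $E^+$, and using $\pi^{(k-1)}\equiv(k-1)!$ so that $(\pi-P)^{(k-1)}(\xi)=(k-1)!-P^{(k-1)}(\xi)$, I obtain
\[
|P^{(k-1)}(\xi)|+|(k-1)!-P^{(k-1)}(\xi)|\le c\,\Delta^{-(k-1)-1/q}(1-\xi)^{-\beta}\eta.
\]
The left-hand side is $\ge (k-1)!$ by the triangle inequality, whence $\eta\ge c\,(1-\xi)^{\beta}\Delta^{(k-1)+1/q}$; since $\Delta=\varphi(\xi)/n$ and $\varphi(\xi)\sim(1-\xi)^{1/2}$, one has $\Delta^{(k-1)+1/q}\sim n^{-(k-1)-1/q}(1-\xi)^{(k-1)/2+1/(2q)}$, which is exactly the claimed bound. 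I expect the only substantive point to be the Chebyshev-short local Nikolskii--Markov inequality: it is the pointwise analogue of $\cor{remez}$ and rests on the same doubling-weight (Videnskii/Remez-type) machinery, via the principle that a degree-$n$ polynomial restricted to an arc of Chebyshev measure $O(1/n)$ is, after rescaling, comparable to one of bounded degree. Everything else is elementary bookkeeping with the weight and the scale $\Delta$, and the argument is insensitive to $q=\infty$.
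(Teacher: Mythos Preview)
Your argument has a genuine gap at the key step: the ``local Nikolskii--Markov inequality''
\[
|Q^{(k-1)}(\xi)|\le c\,\Delta^{-(k-1)-1/q}\norm{Q}{\Lq(E^\pm)}, \qquad Q\in\Pn,
\]
with $c$ independent of $n$, is \emph{false}. Take $\xi=0$ (so $\Delta=1/n$), $k=2$, and let $Q(x):=T_n(2nx/a+1)\in\Pn$ be the Chebyshev polynomial linearly rescaled so that $E^-=[-a/n,0]$ maps onto $[-1,1]$. Then $\norm{Q}{\Lq(E^-)}\le (a/n)^{1/q}$, while $|Q'(0)|=(2n/a)\,T_n'(1)=2n^3/a$; your inequality would force $2n^3/a\le c\,n^{1+1/q}(a/n)^{1/q}=c\,a^{1/q}\,n$, which fails for large $n$. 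The heuristic that ``a degree-$n$ polynomial restricted to a Chebyshev-short arc behaves like one of bounded degree'' underlies Remez-type comparisons (norm on all of $[-1,1]$ versus norm on the \emph{complement} of a short set), but it does \emph{not} furnish derivative bounds from the norm on the short set alone: Markov's inequality on an interval of length $|E^\pm|\sim\Delta$ for $Q\in\Pn$ carries the sharp factor $(n^2/\Delta)^{k-1}$, not $\Delta^{-(k-1)}$, and the example above shows this is not an artifact of a crude bound.

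The paper avoids this obstacle by replacing your pointwise $(k-1)$st derivative with a $k$th finite difference. With $h:=\varphi(\xi)/n$ one forms $\tilde f_n:=\Delta_h^k f$ and $\tilde q_n:=\Delta_h^k p_n\in\Pn$; a shift-and-rescale keeps everything inside $[-1,1]$ and gives $\norm{\wab(f_n-q_n)}{q}\le c\,\norm{\wab(f-p_n)}{q}$. The crucial point is that $f_n$ is supported on a Chebyshev-short interval $I_n$ near $\xi$, so \cor{remez} applied in the \emph{correct} direction to the polynomial $q_n$ yields $\norm{\wab q_n}{q}\le c\,\norm{\wab q_n}{\Lq([-1,1]\setminus I_n)}=c\,\norm{\wab(f_n-q_n)}{\Lq([-1,1]\setminus I_n)}$, and hence $\norm{\wab f_n}{q}\le c\,\norm{\wab(f-p_n)}{q}$. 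A direct computation of $\norm{\wab f_n}{q}$ then delivers the lower bound. The finite-difference device is exactly what makes the Remez inequality usable here; your pointwise route cannot be salvaged without bringing in global information about $P$ on $[-1,1]$ and a substantially different argument.
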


\begin{proof} We only provide the proof for the case $q<\infty$. If $q=\infty$, it is obvious what modifications are needed.
It is convenient to denote $\theta_n := k \varphi(\xi)/(2n)$. Then, in particular, $\theta_n \leq 1/4$ and $\xi \pm 2\theta_n \in [-1,1]$.
Now, let $p_n$ be an arbitrary polynomial from $\Pn$,   define
\begin{eqnarray*}
\tilde f_n(x) := \Delta_{\varphi(\xi)/n}^k (f, x) , &&  \quad f_n(x):= \tilde f_n((1-\theta_n)x),\\
\tilde q_n(x) := \Delta_{\varphi(\xi)/n}^k (p_n, x), && \quad q_n(x):= \tilde q_n((1-\theta_n)x) ,
\end{eqnarray*}
and note that $\tilde q_n$
is a polynomial of degree $\leq n$   on $J_n:= \left[-1+\theta_n, 1-\theta_n \right]$, and hence
$q_n$ is a polynomial of degree $\leq n$   on $[-1,1]$. We also note that
$\tilde f_n(x) = 0$, for $x\not\in \tilde I_n:= \left[\xi - \theta_n, \xi + \theta_n   \right] \subset J_n$, and hence
$  f_n(x) = 0$, for $x\not\in I_n:= \left[(\xi - \theta_n)/(1-\theta_n), (\xi + \theta_n)/(1-\theta_n)   \right] \subset [-1,1]$.

Now,
\begin{eqnarray*}
\norm{\wab (f_n -q_n )}{q}^q &=& \int_{-1}^1 \wab^{q}(x) | \tilde f_n((1-\theta_n)x) - \tilde q_n((1-\theta_n)x)|^q \, dx \\
& \leq & c
\int_{-1+\theta_n}^{1-\theta_n} \wab^q(x/(1-\theta_n)) | \tilde f_n(x) - \tilde q_n(x)|^q \, dx \\
& \leq &
c \int_{-1+\theta_n}^{1-\theta_n}  \wab^q(x/(1-\theta_n))   \sum_{i=0}^k \left| f (x-\theta_n +  i\varphi(\xi)/n) - p_n(x-\theta_n +  i\varphi(\xi)/n) \right|^q \, dx \\
& \leq & c \sum_{i=0}^k \int_{-1+ i\varphi(\xi)/n}^{1-2\theta_n+i\varphi(\xi)/n}     \wab^q\left(  (y + \theta_n -  i\varphi(\xi)/n)/(1-\theta_n)  \right)  |f(y)-p_n(y)|^q \, dy \\
& \leq & c \norm{\wab (f-p_n)}{q}^q ,
\end{eqnarray*}
since $\wab \left(  (y + \theta_n -  i\varphi(\xi)/n)/(1-\theta_n) \right) \leq c \wab(y)$.

 It is straightforward to check that  $\int_{I_n} (1-x^2)^{-1/2} \, dx \leq c(k)/n$, and so  \cor{remez}
implies that
\[
\norm{\wab q_n}{q} \leq c \norm{\wab q_n}{\Lq([-1,1]\setminus I_n)} .
\]
Therefore, recalling that $f_n(x)=0$, $x \in [-1,1]\setminus I_n$, we have
\begin{eqnarray*}
\norm{\wab  f_n  }{q} & \leq & \norm{\wab  (f_n-q_n)  }{q} + \norm{\wab  q_n  }{q} \\
&\leq& \norm{\wab  (f_n-q_n)  }{q} + c \norm{\wab (f_n-q_n)}{\Lq([-1,1]\setminus I_n)}\\
& \leq & c \norm{\wab (f_n-q_n)  }{q} \\
& \leq & c \norm{\wab (f-p_n)}{q} .
\end{eqnarray*}
Now, noting that $\tilde f_n(x)=   f(x+\theta_n) = (x+\theta_n-\xi)^{k-1}$, if
$x\in [\xi-\theta_n, \xi - \theta_n +  \varphi(\xi)/n]$
 we have
\begin{eqnarray*}
\norm{\wab  f_n  }{q}^q
& \geq &
c \int_{-1+\theta_n}^{1-\theta_n} \wab^q(x/(1-\theta_n))   |\tilde f_n (x)|^q \, dx \\
 & \geq  &
  c \int_{\xi-\theta_n}^{\xi - \theta_n+ \varphi(\xi)/n} (1-\theta_n-x)^{\beta q} (x+\theta_n-\xi)^{(k-1)q} \, dx\\
 & \geq  &
  c \int_{\xi}^{\xi+ \varphi(\xi)/n} (1-y)^{\beta q} (y-\xi)^{(k-1)q} \, dy\\
& \geq & c n^{-(k-1)q-1} (1-\xi)^{\beta q + (k-1)q/2 +1/2}  ,
\end{eqnarray*}
and so $\norm{\wab  f_n  }{q} \geq c n^{-k+1-1/q} (1-\xi)^{\beta   + (k-1) /2 +1/(2q)}  $.

 Hence, for any $p_n\in\Pn$,
\[
\norm{\wab (f-p_n)}{q} \geq c n^{-k+1-1/q} (1-\xi)^{\beta   + (k-1) /2 +1/(2q)} ,
\]
and the proof is complete.
\end{proof}

 The following two corollaries provide all lower estimates in \thm{mpoly} and show that none of the powers of $n$ in \cor{corrjack} can be decreased.

 \begin{corollary} \label{corrr1}
Let $1\leq p, q \leq \infty$, $k\in\N$,  and  $\alpha,\beta\geq 0$.
Then, there exists a function $f\in\M^k\cap \W_{p, k-1}^{\alpha+(k-1)/2,\beta+(k-1)/2}$ such that, for each $n\in\N$,
\be \label{trine}
E_n(f)_{\wab,q} \geq  c  n^{-k-1/q+1} \norm{w_{\alpha+(k-1)/2, \beta+(k-1)/2} f^{(k-1)}}{p} ,
\ee
for some constant $c$ independent of $n$.
\end{corollary}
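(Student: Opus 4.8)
The plan is to exhibit a single truncated power as the extremal function and to combine \lem{eneg} with \ineq{trtr}, choosing the breakpoint at the origin. Take the fixed function $f(x):=x_+^{k-1}$ (that is, $\xi=0$ in \lem{eneg}). This is the canonical $k$-monotone function: its $(k-1)$st derivative is $(k-1)!\,\chi_{(0,1)}$ a.e., which is nondecreasing, so $f\in\M^k$, and one checks directly that $f\in\W_{p,k-1}^{\alpha+(k-1)/2,\beta+(k-1)/2}$. Since $\alpha,\beta\geq 0$ and $k\geq 1$ force $\alpha+(k-1)/2,\beta+(k-1)/2\in J_p$, formula \ineq{trtr} with $r=k-1$ and $\xi=0$ gives
\[
\norm{w_{\alpha+(k-1)/2,\beta+(k-1)/2}f^{(k-1)}}{p}\sim (1-0)^{\beta+(k-1)/2+1/p}=1 ,
\]
so the right-hand side of \ineq{trine} is $\sim n^{-k-1/q+1}$.

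For $n\geq 2k$ the hypothesis $0\leq\xi\leq 1-2k^2n^{-2}$ of \lem{eneg} holds with $\xi=0$, and the lemma yields
\[
E_n(f)_{\wab,q}\geq c\,n^{-k+1-1/q}(1-0)^{\beta+(k-1)/2+1/(2q)}=c\,n^{-k+1-1/q} ,
\]
with $c$ independent of $n$. Comparing this with the computed norm, \ineq{trine} follows for all $n\geq 2k$. The reason for taking $\xi=0$ is precisely that the factor $(1-\xi)^{1/(2q)-1/p}$, which would otherwise relate the exponent $\beta+(k-1)/2+1/(2q)$ in the lower bound to the exponent $\beta+(k-1)/2+1/p$ in the norm, collapses to $1$, so the two sides scale identically in $n$.

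It remains to cover the finitely many values $1\leq n<2k$. For these, $f$ is not a polynomial (a truncated power has a genuine breakpoint at $0$), hence it is not a.e.\ equal to any element of the finite-dimensional space $\Pn$; therefore $E_n(f)_{\wab,q}>0$ for each such $n$, and since $n^{-k-1/q+1}\sim 1$ there, \ineq{trine} holds for each of these finitely many $n$ after shrinking $c$. Taking the minimum over $n<2k$ of the resulting constants together with the constant for $n\geq 2k$ gives a single $c>0$ valid for all $n\in\N$. The only genuine subtlety is that one fixed $f$ must serve every $n$, so $\xi$ may not be allowed to depend on $n$; the actual analytic content—the Remez-type lower bound—is already encapsulated in \lem{eneg}, so no further hard estimate is needed here.
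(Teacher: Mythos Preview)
The proposal is correct and follows essentially the same approach as the paper: both take $f(x)=x_+^{k-1}$, use \ineq{trtr} to see that the weighted norm of $f^{(k-1)}$ is $\sim 1$, invoke \lem{eneg} with $\xi=0$ for $n\geq 2k$, and dispose of the finitely many $n<2k$ by a trivial argument (the paper uses the monotonicity $E_n(f)_{\wab,q}\geq E_{2k}(f)_{\wab,q}\geq c$, you use positivity of $E_n(f)_{\wab,q}$ for the non-polynomial $f$ and take a minimum, which amounts to the same thing).
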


\begin{proof} We let $f(x) := x_+^{k-1}$ and note that $f\in\M^k$. Now, \ineq{trtr} implies that
$\norm{w_{\alpha+(k-1)/2, \beta+(k-1)/2} f^{(k-1)}}{p} \sim 1$, and
 \lem{eneg} implies
$E_n(f)_{\wab,q} \geq c n^{-k-1/q+1}$, for $n\geq 2k$. For $1\leq n < 2k$,  \ineq{trine} follows from
$E_n (f)_{\wab,q} \geq E_{2k} (f)_{\wab,q} \geq c$.
\end{proof}

It follows from \cor{cor8.7} that there does not exist $f\in\C^{k-1}(-1,1)\cap\M^k$ which is independent of $n$, satisfies
$\lim_{x \pm 1}  w_{\alpha+r/2,\beta+r/2}(x) f^{(r)}(x) = 0$, and for which \ineq{trine} holds.

\begin{corollary} \label{corrr2}
Let $1\leq p, q \leq \infty$, $k\in\N$, $0\leq r \leq k-1$, $\alpha,\beta \geq 0$,  and $n\in\N$. Then, there exists a function $f_n\in\M^k\cap \W_{p, r}^{\alpha+r/2,\beta+r/2}$ such that
\[
E_n(f_n)_{\wab,q} \geq  c  n^{-r-2/q+2/p} \norm{w_{\alpha+r/2, \beta+r/2} f_n^{(r)}}{p} ,
\]
for some constant $c$ independent of $n$.
\end{corollary}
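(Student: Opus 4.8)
The plan is to use truncated power functions whose singularity is pushed toward the right endpoint at the rate $n^{-2}$, so that the near-endpoint gain supplied by \lem{eneg} and the size of the weighted derivative norm combine to the claimed rate. Fix $n\geq 2k$, set $\xi_n := 1-2k^2 n^{-2}$ (so that $0\leq \xi_n\leq 1-2k^2n^{-2}$ and $1-\xi_n\sim n^{-2}$), and let $f_n(x):=(x-\xi_n)_+^{k-1}$. Since the $(k-1)$st derivative of $f_n$ is a nonnegative nondecreasing step function, $f_n\in\M^k$; and since $f_n^{(r)}(x)=c_{k,r}\,(x-\xi_n)_+^{k-1-r}$ is continuous (as $r\leq k-1$) with $f_n^{(r-1)}\in\AC_\loc(-1,1)$ and finite weighted norm, we have $f_n\in\W_{p,r}^{\alpha+r/2,\beta+r/2}$.

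First I would estimate the two quantities separately. For the approximation error, \lem{eneg}, applied with the \emph{unshifted} weight $\wab$ (legitimate since $\alpha,\beta\geq0$ and $\xi_n\leq 1-2k^2n^{-2}$), gives
\[
E_n(f_n)_{\wab,q}\geq c\,n^{-k+1-1/q}(1-\xi_n)^{\beta+(k-1)/2+1/(2q)}.
\]
For the weighted derivative, I would invoke \ineq{trtr} with its weight parameters \emph{shifted} to $\alpha+r/2,\beta+r/2$ (permissible since $\beta+r/2\in J_p$), which yields
\[
\norm{w_{\alpha+r/2,\beta+r/2}f_n^{(r)}}{p}\sim (1-\xi_n)^{\beta-r/2+k-1+1/p}.
\]

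Dividing the first estimate by the second, the exponent of $(1-\xi_n)$ becomes $-(k-1)/2+r/2+1/(2q)-1/p$; substituting $1-\xi_n\sim n^{-2}$ turns this into a factor $n^{(k-1)-r-1/q+2/p}$, which together with the prefactor $n^{-k+1-1/q}$ collapses exactly to $n^{-r-2/q+2/p}$. Hence $E_n(f_n)_{\wab,q}\geq c\,n^{-r-2/q+2/p}\norm{w_{\alpha+r/2,\beta+r/2}f_n^{(r)}}{p}$ for all $n\geq 2k$. For the finitely many remaining degrees $1\leq n<2k$ I would take the fixed function $f_n(x):=x_+^{k-1}$: by \ineq{trtr} its weighted derivative norm is $\sim 1$, while the monotonicity of $E_n$ in $n$ together with the fact that $x_+^{k-1}$ is not a polynomial gives $E_n(f_n)_{\wab,q}\geq E_{2k}(f_n)_{\wab,q}\geq c>0$; since $n^{-r-2/q+2/p}$ is bounded on this finite range, the claim follows after adjusting the constant.

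The underlying computation is routine; the only place demanding care is the exponent bookkeeping — keeping the unshifted weight $\wab$ in the error estimate distinct from the shifted weight $w_{\alpha+r/2,\beta+r/2}$ in the derivative norm, and verifying that the choice $1-\xi_n\sim n^{-2}$ is precisely the one that balances the two powers of $n$. This balancing, and the requirement that $f_n^{(r)}$ be a genuine truncated power so that both \lem{eneg} and \ineq{trtr} apply, is exactly where the restriction $0\leq r\leq k-1$ is used.
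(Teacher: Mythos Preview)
Your proof is correct and follows essentially the same approach as the paper's own argument: the same choice $\xi_n=1-2k^2n^{-2}$, $f_n(x)=(x-\xi_n)_+^{k-1}$ for $n\ge 2k$, the same appeal to \lem{eneg} and \ineq{trtr}, and the same fallback to $x_+^{k-1}$ for small $n$. The only difference is cosmetic: you track the exponent of $(1-\xi_n)$ explicitly before substituting $1-\xi_n\sim n^{-2}$, whereas the paper substitutes immediately and records only the resulting powers of $n$.
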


\begin{proof} For $1\leq n < 2k$, the statement is clearly true, for example, for $f_n(x)=x_+^{k-1}$. If $n\geq 2k$, we let $\xi_n = 1- 2k^2 n^{-2}$ and $f_n(x) :=   (x-\xi_n)_+^{k-1}$. Then $f_n  \in \M^k$,
\lem{eneg} implies that
$
E_n(f_n)_{\wab,q} \geq c n^{-2\beta - 2k + 2 - 2/q} ,
$
and \ineq{trtr} yields
$
\norm{w_{\alpha+r/2, \beta+r/2} f_n^{(r)}}{p}  \sim   n^{-2\beta-2k-2/p+2+r} .
$
Therefore,
$ E_n(f_n)_{\wab,q}/\norm{w_{\alpha+r/2, \beta+r/2} f_n^{(r)}}{p}
\geq c  n^{-r-2/q+2/p}$.
\end{proof}

It is interesting to note that \cor{cor8.6} implies that $f_n$ in \cor{corrr2} cannot be replaced by a function which is independent of $n$.

\begin{bibsection}
\begin{biblist}

\bib{dl}{book}{
    author={DeVore, R. A.},
    author={Lorentz, G. G.},
     title={Constructive approximation},
    series={Grundlehren der Mathematischen Wissenschaften [Fundamental
            Principles of Mathematical Sciences]},
    volume={303},
 publisher={Springer-Verlag},
     place={Berlin},
      date={1993},
     pages={x+449},
}

\bib{dt}{book}{
  author={Ditzian, Z.},
  author={Totik, V.},
  title={Moduli of smoothness},
  series={Springer Series in Computational Mathematics},
  volume={9},
  publisher={Springer-Verlag},
  place={New York},
  date={1987},
  pages={x+227},
  isbn={0-387-96536-X},
}

\bib{dsh}{book}{
  author={Dzyadyk, V. K.},
  author={Shevchuk, I. A.},
  title={Theory of Uniform Approximation of Functions by Polynomials},
  publisher={Walter de Gruyter},
  place={Berlin},
  date={2008},
  }

\bib{emn}{article}{
   author={Erd{\'e}lyi, T.},
   author={M{\'a}t{\'e}, A.},
   author={Nevai, P.},
   title={Inequalities for generalized nonnegative polynomials},
   journal={Constr. Approx.},
   volume={8},
   date={1992},
   number={2},
   pages={241--255},
}

\bib{klm}{article}{
   author={Konovalov, V. N.},
   author={Leviatan, D.},
   author={Maiorov, V. E.},
   title={Approximation by polynomials and ridge functions of classes of
   $s$-monotone radial functions},
   journal={J. Approx. Theory},
   volume={152},
   date={2008},
   number={1},
   pages={20--51},
}

\bib{k-whitney}{article}{
   author={Kopotun, K. A.},
   title={Whitney theorem of interpolatory type for $k$-monotone functions},
   journal={Constr. Approx.},
   volume={17},
   date={2001},
   number={2},
   pages={307--317},
}

 \bib{k2009}{article}{
   author={Kopotun, K. A.},
   title={On moduli of smoothness of $k$-monotone functions and
   applications},
   journal={Math. Proc. Cambridge Philos. Soc.},
   volume={146},
   date={2009},
   number={1},
   pages={213--223},
}

 \bib{kls}{article}{
   author={Kopotun, K. A.},
   author={Leviatan, D.},
   author={Shevchuk, I. A.},
   title={New moduli of smoothness},
   journal={Publ. Math. Inst. (Beograd)},
   date={to appear}
}

 \bib{kls-ca}{article}{
   author={Kopotun, K. A.},
   author={Leviatan, D.},
   author={Shevchuk, I. A.},
   title={New moduli of smoothness: weighted DT moduli revisited and applied},
  journal={Constr. Approx.},
   date={to appear},

}

\bib{lr}{article}{
   author={Luther, U.},
   author={Russo, M. G.},
   title={Boundedness of the Hilbert transformation in some weighted Besov type spaces},
   journal={Integral Equations Operator Theory},
   volume={36},
   date={2000},
   number={2},
   pages={220--240},
   issn={0378-620X},
}

\bib{mt}{article}{
   author={Mastroianni, G.},
   author={Totik, V.},
   title={Weighted polynomial inequalities with doubling and $A_\infty$
   weights},
   journal={Constr. Approx.},
   volume={16},
   date={2000},
   number={1},
   pages={37--71},
}

\bib{pp}{book}{
   author={Petrushev, P. P.},
   author={Popov, V. A.},
   title={Rational approximation of real functions},
   series={Encyclopedia of Mathematics and its Applications},
   volume={28},
   publisher={Cambridge University Press, Cambridge},
   date={1987},
   pages={xii+371},
}

 \end{biblist}
\end{bibsection}

\end{document}